\numberwithin{equation}{section}
\newtheorem{theorem}{Theorem}[section]
\newtheorem{lemma}[theorem]{Lemma}
\begin{document}

\title[Bank--Laine functions]{Bank--Laine functions with preassigned\\
number of zeros}

\author{Yueyang Zhang}
\address{School of Mathematics and Physics, University of Science and Technology Beijing, No.~30 Xueyuan Road, Haidian, Beijing, 100083, P.R. China}
\email{zhangyueyang@ustb.edu.cn}
\thanks{The author is supported by the Fundamental Research Funds for the Central Universities~{(FRF-TP-19-055A1)} and a Project supported by the National Natural Science Foundation of China~{(12301091)}}

\subjclass[2010]{Primary 34M10; Secondary 30D15}

\keywords{Linear differential equation; Bank--Laine function; Bank--Laine conjecture; Quasiregular function; Quasiconformal surgery}

\date{\today}

\commby{}

\begin{abstract}
A Bank--Laine function $E$ is written as $E=f_1f_2$ for two normalized solutions $f_1$ and $f_2$ of the second order differential equation $f''+Af=0$, where $A$ is an entire function. In this paper, we first complete the construction of Bank--Laine functions by Bergweiler and Eremenko. Then, letting $n\in \mathbb{N}$ be a positive integer, we show the existence of entire functions $A$ for which the associated Bank--Laine functions $E=f_1f_2$ have preassigned exponent of convergence of number of zeros $\lambda(E)$ of three types:

\begin{enumerate}
\item [(1)]
for every two numbers $\lambda_1,\lambda_2\in[0,n]$ such that $\lambda_1\leq \lambda_2$, there exists an entire function $A$ of order $\rho(A)=n$ such that $E=f_1f_2$ satisfies $\lambda(f_1)=\lambda_1$, $\lambda(f_2)=\lambda_2$ and $\lambda(E)=\lambda_2\leq \rho(E)=n$;

\item [(2)]
for every number $\rho\in(n/2,n)$ and $\lambda\in[0,\infty)$, there exists an entire function $A$ of order $\rho(A)=\rho$ such that $E=f_1f_2$ satisfies $\lambda(f_1)=\lambda$, $\lambda(f_2)=\infty$ and, moreover, $E_c=f_1(cf_1+f_2)$ satisfies $\lambda(E_c)=\infty$ for any constant $c$;

\item [(3)]
for every number $\lambda\in[0,n]$, there exists an entire function $A$ of order $\rho(A)=n$ such that $E=f_1f_2$ satisfies $\lambda(f_1)=\lambda$, $\lambda(f_2)=\infty$ and, moreover, $E_c=f_1(cf_1+f_2)$ satisfies $\lambda(E_c)=\infty$ for any constant $c$.
\end{enumerate}
The construction for the three types of Bank--Laine functions requires new developments of the method of quasiconformal surgery by Bergweiler and Eremenko.

\end{abstract}

\maketitle


\section{Introduction}\label{intro} 


In the last 1980s, Bank and Laine~\cite{Banklaine1982,Banklaine1982-2} initiated the study on the complex oscillation of the second order differential equation
\begin{equation}\label{bank-laine0}
f''+Af=0,
\end{equation}
where $A$ is an entire function, under the framework of Nevanlinna theory (see~\cite{Hayman1964Meromorphic} for the standard notation and basic results).
It is elementary that all nontrivial solutions of equation \eqref{bank-laine0} are entire functions. Let $f_1$ and $f_2$ be two linearly independent solutions of \eqref{bank-laine0}. When $A$ is a polynomial, the product $E=f_1f_2$ satisfies $\lambda(E)=\rho(E)$ and $\rho(E)$ is an integer order or half-integer order. When $A$ is transcendental and of finite order, an application of the lemma on the logarithmic derivative~\cite[Section~2.2]{Hayman1964Meromorphic} easily yields that all nontrivial solutions of \eqref{bank-laine0} satisfy $\rho(f)=\infty$. Bank and Laine proved the following

\begin{theorem}[\cite{Banklaine1982,Banklaine1982-2}]\label{maintheorem1}
Let $E$ be the product of two linearly independent solutions $f_1$ and $f_2$ of the differential equation \eqref{bank-laine0}, where $A$ is a transcendental entire function of finite order. Then
\begin{itemize}
  \item [(1)] if $\rho(A)\not\in \mathbb{N}$, then $\lambda(E) \geq \rho(A)$;
  \item [(2)] if $\rho(A)<1/2$, then $\lambda(E)=\infty$.
\end{itemize}

\end{theorem}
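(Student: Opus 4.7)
The unifying tool is the \emph{Bank--Laine identity}. Since $f''+Af=0$ has no first-order term, the Wronskian $W(f_1,f_2)$ is a nonzero constant, and I may rescale one solution to make $W\equiv 1$. Writing $E=f_1f_2$ and differentiating twice, using $f_i''=-Af_i$, gives
$$4AE^2=(E')^2-2EE''-1,\qquad\text{equivalently}\qquad 4A=\left(\frac{E'}{E}\right)^2-2\frac{E''}{E}-\frac{1}{E^2}.$$
From the second form, the lemma on the logarithmic derivative $m(r,E^{(k)}/E)=S(r,E)$, combined with the first main theorem $T(r,E)=m(r,1/E)+N(r,1/E)+O(1)$, yields the comparisons (outside an exceptional set of finite logarithmic measure)
$$T(r,E)\leq \tfrac{1}{2}T(r,A)+N(r,1/E)+S(r,E),\qquad T(r,A)\leq 2T(r,E)+S(r,E).$$

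For part (1) I argue by contraposition. Assuming $\lambda(E)<\rho(A)$, one has $N(r,1/E)=O(r^{\lambda(E)+\varepsilon})=o(T(r,A))$, so the first inequality forces $\rho(E)\leq\rho(A)$ while the second always gives $\rho(A)\leq\rho(E)$; hence $\rho(E)=\rho(A)<\infty$. Hadamard's factorization theorem then applies to $E$: write $E=\Pi e^{P}$, where $\Pi$ is the canonical product of the zeros of $E$ (of order $\lambda(E)$) and $P$ is a polynomial. Since $\rho(E)=\max(\lambda(E),\deg P)$ and $\lambda(E)<\rho(E)=\rho(A)$, I conclude $\deg P=\rho(A)\in\mathbb{N}$, contradicting $\rho(A)\notin\mathbb{N}$.

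For part (2) I assume $\rho(A)<1/2$ and, for contradiction, $\lambda(E)<\infty$. The contraposition of (1) already forces $\lambda(E)\geq\rho(A)$, so the displayed bounds give $\rho(E)=\lambda(E)<\infty$ and a Hadamard factorization $E=\Pi e^{P}$ with $\deg P\leq\lambda(E)$. The Nevanlinna inequalities alone do not yet preclude finite $\lambda(E)$, so the missing ingredient is Wiman's $\cos\pi\rho$ theorem: since $\rho(A)<1/2$, there exists a sequence $r_n\to\infty$ along which
$$\log\min_{|z|=r_n}|A(z)|\;\geq\;(\cos\pi\rho(A)-\varepsilon)\log M(r_n,A)\;\longrightarrow\;\infty.$$
Combining this strong minimum-modulus bound with the Bank--Laine identity on the circles $|z|=r_n$ and a Sturm-type oscillation count for the zeros of $f_1,f_2$ produces more zeros of $E$ than a finite $\lambda(E)$ permits, yielding the contradiction.

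The main obstacle is part (2): part (1) is essentially formal once the Bank--Laine identity, the logarithmic derivative lemma, and Hadamard factorization are assembled, whereas part (2) requires the genuine real-variable input of Wiman's theorem together with a careful zero count on the distinguished circles, and ensuring that every $S(r,E)$ error term is absorbed uniformly enough to close the argument is the delicate point.
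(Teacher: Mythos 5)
The paper does not prove this theorem; it is quoted as a classical result of Bank and Laine with citations to their original papers, so there is no in-paper proof to compare against. Your part (1) is essentially the classical argument (the Bank--Laine identity \eqref{recuree1pm7}, the lemma on the logarithmic derivative, and Hadamard factorization) and is correct modulo routine technicalities you elide: you must establish $\rho(E)<\infty$ before comparing $\rho(E)$ with $\rho(A)$ (this is not automatic, since all nontrivial solutions of \eqref{bank-laine0} have infinite order when $A$ is transcendental of finite order; it follows from your first displayed inequality only under the assumed bound on $N(r,1/E)$), and the exceptional sets arising in the logarithmic derivative lemma need to be reconciled with the monotonicity of $T(r,\cdot)$.

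Part (2), however, has a genuine gap. You correctly identify Wiman's $\cos\pi\rho$ theorem as the key real-variable input, but the mechanism you sketch --- a ``Sturm-type oscillation count for the zeros of $f_1,f_2$ on the distinguished circles'' producing more zeros of $E$ than a finite $\lambda(E)$ permits --- is not the classical argument and is unlikely to be made rigorous, since Sturm comparison is a one-dimensional real-variable tool with no direct analogue on circles in $\mathbb{C}$, and the contradiction in the actual proof has nothing to do with counting zeros. The argument runs as follows: assuming $\lambda(E)<\infty$, first deduce $\rho(E)<\infty$ as in part (1), so that $|E'/E|$ and $|E''/E|$ are polynomially bounded in $r$ on a set of circles of full density. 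Wiman's theorem supplies circles $|z|=r_n$, $r_n\to\infty$, on which $\min_{|z|=r_n}|A(z)|\to\infty$ super-polynomially. Isolating the term $1/E^2$ in the Bank--Laine identity shows that $1/|E|^2$ must dominate the other, polynomially bounded, terms on those circles, forcing $\max_{|z|=r_n}|E(z)|\to 0$. The maximum principle applied to the subharmonic function $\log|E|$ then forces $E\equiv 0$, which is the required contradiction; no zero count occurs anywhere.
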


The condition $\rho(A)<1/2$ in (2) in Theorem~\ref{maintheorem1} was extended to $\rho(A)=1/2$ by Shen~\cite{Shen1985} and Rossi~\cite{Rossi1986} independently. They actually showed that if $1/2\leq \rho(A)<1$, then the conclusion $\lambda(E) \geq \rho(A)$ can be improved to satisfy the inequality
\begin{equation}\label{bank-laine1}
\frac{1}{\rho(A)}+\frac{1}{\lambda(E)}\leq 2.
\end{equation}
A problem left by their work--known as \emph{the Bank--Laine conjecture}--is whether $\lambda(E)=\infty$ whenever $\rho(A)\not\in \mathbb{N}$. This conjecture has attracted much interest; see the surveys \cite{Gundersen2014,lainetohge2008} and references therein. On the other hand, when $A$ has integer order, there are examples of $A$ (say $A(z)=e^z-q^2/16$ for an odd integer $q$) such that the product $E=f_1f_2$ of two linearly independent solutions $f_1$ and $f_2$ of \eqref{bank-laine0} satisfies $\lambda(E)<\infty$ or even $\lambda(E)=0$; see~\cite[Theorem~5.22]{Laine1993} and also more complex oscillation results on the second order differential equation \eqref{bank-laine0} there.

An entire function $E$ is called a \emph{Bank--Laine function} if $E(z)=0$ implies either $E'(z)=1$ or $E'(z)=-1$. In particular, $E$ is called \emph{special} if $E(z)=0$ implies that $E'(z)=1$. We say that two linearly independent solutions $f_1$ and $f_2$ of \eqref{bank-laine0} are \emph{normalized} if the \emph{Wronskian determinant}~$W(f_1,f_2)=f_1f_2'-f_1'f_2=1$. Then a Bank--Laine function $E$ can be written as $E=f_1f_2$ for two normalized solutions $f_1$ and $f_2$ of the second order differential equation \eqref{bank-laine0}; see~\cite[Proposition~6.4]{Laine1993}. Most study on the Bank--Laine conjecture has concentrated on constructing Bank--Laine functions with certain properties. It is easy to see that all Bank--Laine functions have order~$\geq1$ by Theorem~\ref{maintheorem1} and the inequality \eqref{bank-laine1}. Bergweiler and Eremenko~\cite{Bergweilereremenko2017,Bergweilereremenko2019} disproved the Bank--Laine conjecture by using the method of quasiconformal surgery to construct, for each $\rho\in(1/2,\infty)$, an entire function $A$ such that $\rho(A)=\rho$ and the differential equation \eqref{bank-laine0} has two linearly independent solutions $f_1,f_2$ such that $\lambda(E)=\rho(E)<\infty$ and one of $f_1$ and $f_2$ has no zeros. Their construction in \cite{Bergweilereremenko2017} shows for the first time that there exists a special Bank--Laine function $E$ such that $\rho(E)$ is not an integer order or half-integer order. We refer to~\cite{Ahlfors2006,lehtoVirtanen2008} for the basic results of quasiconformal mapping theory.

For two linearly independent solutions $f_1$ and $f_2$ of equation \eqref{bank-laine0}, it is well known that the ratio $F=f_2/f_1$ satisfies the \emph{Schwarzian differential equation}
\begin{equation}\label{schwarzdifferential}
\begin{split}
S(F):=\left(\frac{F''}{F'}\right)'-\frac{1}{2}\left(\frac{F''}{F'}\right)^2=\frac{F'''}{F'}-\frac{3}{2}\left(\frac{F''}{F'}\right)^2=2A,
\end{split}
\end{equation}
where the expression $S(F)$ is called the \emph{Schwarzian derivative} of $F$. Then $F$ is a locally univalent meromorphic function and the Schwarzian derivative is invariant under the linear fractional transformation of $F$. This implies a one-to-one correspondence between the equivalence class of locally univalent meromorphic functions and entire functions. A normalized pair $(f_1,f_2)$ of solutions of \eqref{bank-laine0} can be recovered from $F$ by the formulas $f_1^2=1/F'$ and $f_2^2=F^2/F'$. The quotient $F/F'=f_1f_2=E$ is then a Bank--Laine function and all Bank--Laine functions arise this way. Zeros of a Bank--Laine function $E$ are poles and zeros of $F$, and $E$ is special if and only if $F$ is entire. By gluing different locally univalent meromorphic functions together using quasiconformal surgery, Bergweiler and Eremenko obtain the desired locally univalent entire functions $F$ and then recover the corresponding coefficients $A$ from equation~\eqref{schwarzdifferential}. In~\cite{Zhang2024}, the present author completed Bergweiler and Eremenko's construction in \cite{Bergweilereremenko2019} and, in particular, prove the following

\begin{theorem}[\cite{Zhang2024}]\label{maintheorem2}
Let $\lambda\in(1,\infty)$ and $\delta\in[0,1]$. Then there exists a Bank--Laine function $E$ such that $\lambda(E)=\rho(E)=\lambda$ and $E=f_1f_2$ for two entire functions $f_1$ and $f_2$ such that $\lambda(f_1)=\delta\lambda$ and $\lambda(f_2)=\lambda$.
\end{theorem}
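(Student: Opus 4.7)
The approach I would take works through the Schwarzian correspondence highlighted after~\eqref{schwarzdifferential}: instead of producing $A$ and the pair $(f_1,f_2)$ directly, I would construct a locally univalent meromorphic function $F$ on $\mathbb{C}$ and then set $A=S(F)/2$, $f_1^2=1/F'$, $f_2^2=F^2/F'$, $E=F/F'$. Under this dictionary the zeros of $f_1$ are the poles of $F$, the zeros of $f_2$ are the zeros of $F$, and $\lambda(E)=\max\{\lambda(f_1),\lambda(f_2)\}$. Hence the statement reduces to producing, for each $\lambda\in(1,\infty)$ and $\delta\in[0,1]$, a locally univalent meromorphic function $F$ on $\mathbb{C}$ with $\rho(F)=\lambda$ whose zero set has exponent of convergence $\lambda$ and whose pole set has exponent of convergence $\delta\lambda$.

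To produce such an $F$, I would follow the quasiconformal surgery framework of Bergweiler--Eremenko. Start from a base locally univalent meromorphic model $F_0$ of order $\lambda$ whose zero set has exponent of convergence $\lambda$ and which has no poles; such an $F_0$ realises the $\delta=0$ case and is essentially supplied by the constructions in~\cite{Bergweilereremenko2017,Bergweilereremenko2019}, arranged so that $|F_0|$ is uniformly large on a ``large'' subset of the plane in which new poles may be inserted. Next, fix a discrete set $P\subset\mathbb{C}$ with counting function $n(r,P)\asymp r^{\delta\lambda}$, whose points are well separated, concentrated in the regions where $|F_0|$ is large, and kept far from the zeros of $F_0$. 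Around each $p\in P$, on a small disk $D(p,r_p/2)$, replace $F_0$ by a locally univalent rational model having a simple pole at $p$ (for instance $1/(z-p)$ post-composed with a M\"obius transformation matching the boundary values), and interpolate on the annulus $r_p/2<|z-p|<r_p$ by a quasiconformal bridge. The resulting map $G:\mathbb{C}\to\hat{\mathbb{C}}$ is quasiregular, locally injective, agrees with $F_0$ outside $\bigcup_{p\in P}D(p,r_p)$, and has pole set exactly $P$. The Measurable Riemann Mapping Theorem then provides a quasiconformal $\phi:\mathbb{C}\to\mathbb{C}$ solving the Beltrami equation for $\mu:=G_{\bar z}/G_z$, and $F:=G\circ\phi^{-1}$ is the desired locally univalent meromorphic function.

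The main obstacle, and the step requiring genuine quantitative work, is showing that the quasiconformal correction $\phi$ is asymptotically conformal in the sense that $\phi(z)/z\to 1$ as $|z|\to\infty$ off a set of vanishing logarithmic density. This is where the radii $r_p$ must be chosen to decay fast enough as $|p|\to\infty$ that the dilatation $\mu$ is concentrated on a set of small planar density at infinity. Once this is established, the standard arguments of~\cite{Bergweilereremenko2017,Bergweilereremenko2019} give $\rho(F)=\rho(F_0)=\lambda$ and show that the exponents of convergence of $\phi(P)$ and $\phi(F_0^{-1}(0))$ coincide with those of $P$ and $F_0^{-1}(0)$; hence $\lambda(f_1)=\delta\lambda$ and $\lambda(f_2)=\lambda$, while $\lambda(E)=\max(\lambda(f_1),\lambda(f_2))=\lambda$ and $\rho(E)\leq\max(\rho(F),\rho(F'))=\lambda$ together with $\rho(E)\geq\lambda(E)=\lambda$ force $\rho(E)=\lambda$. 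The genuinely new feature compared to the $\delta=0$ base case of Bergweiler--Eremenko is that, when $\delta$ is close to $1$, the inserted pole set is almost as dense as the preexisting zero set, so one has to be more careful than in the original argument to place $P$ and to choose the radii $r_p$ in such a way that the total surgery region still has vanishing density at~$\infty$ and the two exponents of convergence can be prescribed independently.
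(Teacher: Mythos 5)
There is a genuine gap in your construction: the local pole-insertion step cannot work, for a topological reason. You want a quasiregular, \emph{locally injective} map $G$ that agrees with the pole-free map $F_0$ on $\partial D(p,r_p)$ and outside the disks, but has a simple pole at $p$. That is impossible. Since $F_0$ is entire and locally univalent, $F_0|_{\partial D(p,r_p)}$ is a loop with winding number $0$ about $\infty$; for any quasiregular, orientation-preserving, locally injective extension $G$ of this boundary loop to $D(p,r_p)$, the topological degree $\deg(G,D(p,r_p),\infty)$ equals that winding number and also equals the number of preimages of $\infty$ in the disk (each counted with local degree $+1$). Hence $G$ has no poles in $D(p,r_p)$, no matter how the interpolating annular bridge is chosen. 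The same degree computation shows one cannot change the number of zeros either. So the $\delta>0$ cases simply cannot be reached by surgery inside small disks on a fixed pole-free model; there is nothing ``quantitative'' about this obstacle and shrinking $r_p$ does not help. (A secondary issue: the bound $\rho(E)\le\max(\rho(F),\rho(F'))$ is not valid as stated for $E=F/F'$ — take $F=e^{z^2}$, $E=1/(2z)$ — so the order identification also needs a different argument, of the kind carried out in Section~\ref{Proof of maintheorem3}.)

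The paper avoids this obstruction by never attempting local insertion. It works with a two-parameter family of globally defined, locally univalent meromorphic functions $g_{m,n}$, given in~\eqref{recuree1pm9}, which already carry $m$ poles and $2n$ zeros per $2\pi i$-period, and glues the restrictions of $g_{m_k,n_k}$ to horizontal strips along the lines $\mathrm{Im}\,z=2\pi\mathcal{N}_{k}$ (and along the imaginary axis, via the map $Q$). The gluing is possible without degree anomalies because each $g_{m,n}$ maps $\mathbb{R}$ homeomorphically onto $(1,\infty)$, so consecutive pieces match along the gluing lines after composing with the diffeomorphisms $\phi_k$ of Lemma~\ref{Lemma 0}. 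The two integer sequences $(m_k)$ and $(n_k)$, supplied by the preliminary lemmas (here corrected in Lemma~\ref{Lemma1} and Lemma~\ref{Lemma2}), control the exponents of convergence of the pole set (hence $\lambda(f_1)=\delta\lambda$) and of the zero set (hence $\lambda(f_2)=\lambda$) independently. That is the mechanism by which the theorem prescribes both exponents, and it has no analogue in a single-model-plus-local-surgery approach.
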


The construction in \cite{Zhang2024} starts from a family of locally univalent meromorphic functions $F=f_2/f_1$ with $f_1$ and $f_2$ being two normalized solutions such that $\lambda(f_i)<\infty$, $i=1,2$, of the second order differential equation
\begin{equation*}
f''-\left[\frac{1}{4}e^{2z}+\left(\frac{k_2-k_1}{2}\right)e^{z}+\frac{(k_1+k_2+1)^2}{4}\right]f=0,
\end{equation*}
where $k_1,k_2$ are two nonnegative integers (see also~\cite{ChiangIsmail2006,zhang2021,zhang2021-1,Zhang2022}). This equation admits two normalized solutions $f_1$ and $f_2$ of the form
\begin{equation*}
\left\{
  \begin{array}{ll}
    f_1=\left(\sum_{i=0}^{k_1}a_ie^{iz}\right)e^{-\frac{k_1+k_2+1}{2}z}e^{-\frac{1}{2}e^{z}},       &\\
    f_2=\left(\sum_{j=0}^{k_2}b_je^{jz}\right)e^{-\frac{k_1+k_2+1}{2}z}e^{\frac{1}{2}e^{z}}
  \end{array}
\right.
\end{equation*}
with
\begin{equation*}
\left\{
  \begin{array}{ll}
    a_i=\frac{k_2!}{(i+k_2)!}a_0,       & \quad  i=1,\cdots,k_1,\\
    b_j=\frac{(-1)^jk_1!}{(j+k_1)!}b_0, & \quad  j=1,\cdots,k_2
  \end{array}
\right.
\end{equation*}
and $a_0$ and $b_0$ are two constants such that $a_{k_1}b_{k_2}=\frac{k_1!k_2!}{[(k_1+k_2)!]^2}a_0b_0=1$. Let
\begin{equation*}
\begin{split}
F=\frac{f_2}{f_1}=\left(\frac{\sum_{j=0}^{k_2}b_je^{jz}}{\sum_{i=0}^{k_1}a_ie^{iz}}\right)e^{e^{z}}.
\end{split}
\end{equation*}
We choose $a_0=b_0$. Let $A_0=B_0=1$ and $A_i=a_i/a_0$ and $B_j=b_j/b_0$. For every pair $(m,2n)$ of two nonnegative integers $m$ and $n$, we consider the rational function
\begin{equation}\label{recuree1pm8}
\begin{split}
R_{m,n}(z)=\frac{P_n(z)}{Q_m(z)}=\frac{\sum_{j=0}^{2n}B_jz^j}{\sum_{i=0}^{m}A_iz^i}.
\end{split}
\end{equation}
Thus we may write
\begin{equation*}
\begin{split}
h_{m,n}(z)=R_{m,n}(z)e^{z}=\left(\frac{\sum_{j=0}^{2n}B_jz^j}{\sum_{i=0}^{m}A_iz^i}\right)e^{z}.
\end{split}
\end{equation*}
It follows that $A_{m}B_{2n}=\frac{m!(2n)!}{[(m+2n)!]^2}=\frac{1}{\binom{m+2n}{m}(m+2n)!}$.
Then the meromorphic (entire when $m=0$) function
\begin{equation}\label{recuree1pm9}
\begin{split}
g_{m,n}(z)=h_{m,n}(e^z)=\left(\frac{\sum_{j=0}^{2n}B_je^{jz}}{\sum_{i=0}^{m}A_ie^{iz}}\right)e^{e^{z}}
\end{split}
\end{equation}
satisfies
\begin{equation}\label{recuree1pm10}
\begin{split}
g'_{m,n}(z)=\frac{1}{\binom{m+2n}{m}(m+2n)!}\frac{e^{e^{z}+(m+2n+1)z}}{\left(\sum_{i=0}^{m}A_ie^{iz}\right)^2}.
\end{split}
\end{equation}
Since $A_j$ are all positive numbers, $g'_{m,n}(z)\not=0$ for all $z\in \mathbb{C}$ and $g_{m,n}$ is increasing on $\mathbb{R}$, and satisfies $g_{m,n}(x)\to 1$ as $x\to -\infty$ as well as $g_{m,n}(x)\to \infty$ as $x\to \infty$. By the recursive formulas for $A_i$, we see that if $n\geq 1$, then $A_1+A_2+\cdots+A_{m}<1$ for all $m\geq 1$, which implies that $g_{m,n}$ has no poles on the imaginary axis. If $n=0$ and $m\geq 1$, it is possible that $g_{m,n}$ has some poles on the imaginary axis. Bergweiler and Eremenko glued the functions $g_{m,n}$ in \eqref{recuree1pm9} with $m=0$ for two or infinitely many different even numbers $2n$. Our construction in \cite{Zhang2024} considered the generic choice of $m$ and $n$. For the underlying ideas behind the gluing, see the description in~\cite[Section~2]{Bergweilereremenko2019}.

In the proof of Theorem~\ref{maintheorem2} in \cite{Zhang2024}, for each number $\rho\in(1/2,1)$, we let $\gamma=1/(2\rho-1)$ so that $1/\rho+1/(\rho\gamma)=2$. The hypothesis $1/2<\rho<1$ corresponds to $\gamma>1$ as well as $\lambda=\rho\gamma>1$. The proof of Theorem~\ref{maintheorem2} actually shows that, for every $\delta\in[0,1]$, there exists an entire function $A$ of order $\rho(A)=\rho$ such that the associated Bank--Laine function $E=f_1f_2$ for two normalized solutions $f_1$ and $f_2$ of the differential equation \eqref{bank-laine0} satisfies $\lambda(f_1)=\delta\rho\gamma$ and $\lambda(f_2)=\rho\gamma$, respectively. In particular, when $\delta=0$ the function $f_1$ can be chosen to have no zeros so that $E$ is special. Theorem~\ref{maintheorem2} completes the construction for Bank--Laines in~\cite[Corollary~1.1]{Bergweilereremenko2019}. As pointed out in~\cite{Zhang2024}, we can extend the results to the case that $\rho\in(n/2,n)$ for any positive integer $n$ so that there exists an entire function $A$ of order $\rho(A)=\rho$ such that the associated Bank--Laine function $E=f_1f_2$ satisfies $\lambda(E)=\lambda$ and $n/\rho+n/\lambda=2$ by following exactly the same process as in \cite[Section~5]{Bergweilereremenko2019}, thus completing the construction in \cite[Corollary~1.2]{Bergweilereremenko2019}; see also the comments in Subsection~\ref{Completion of the proof-ex} and Subsection~\ref{Completion of the proof-add}.
A more general assumption $\rho\in(1/2,1]$ was considered in \cite{Zhang2024}. Unfortunately, the proof for Theorem~\ref{maintheorem2} there is not complete because the proof for the two lemmas in \cite[Lemma~2.1 and Lemma~2.2]{Zhang2024} contains gaps in both of the two cases $\gamma=1$ and $\gamma>1$. One of the purposes of this paper is to fill these gaps. The case $\gamma=1$ will be included in Theorem~\ref{maintheorem4} below and the corrected versions of the two lemmas will be provided in Section~\ref{Proof of maintheorem5}.

Here and henceforth, we use the notation $S_1(r)\asymp S_2(r)$, provided that there are two positive constants $c_1$ and $c_2$ such that two positive quantities $S_1(r)$ and $S_2(r)$ defined for $r\in(0,\infty)$ satisfy $c_2 S_2(r)\leq S_1(r)\leq c_1S_2(r)$ for all large~$r$. With this notation, we note from the proof of Theorem~\ref{maintheorem2}, together with the modifications in Section~\ref{Proof of maintheorem5}, that when $\delta=1$ and $\gamma>1$, the numbers of zeros of $f_1$ and $f_2$ actually satisfy
\begin{equation}\label{bank-laine1kktf}
n(r,0,f_1)\asymp (\log r)^{-3}r^{\lambda} \quad \text{and} \quad n(r,0,f_2)\asymp r^{\lambda}.
\end{equation}
Thus, for the Bank--Laine functions constructed in Theorem~\ref{maintheorem2}, $f_1$ always has less number of zeros than $f_2$. It seems that the method in \cite{Zhang2024} does not allow to remove the term $(\log r)^{-3}$ in \eqref{bank-laine1kktf}. As a complement to Theorem~\ref{maintheorem2}, we shall first consider gluing two locally univalent meromorphic functions $g_{m,n}$ in \eqref{recuree1pm9} as in \cite{Bergweilereremenko2017} and prove the following

\begin{theorem}\label{maintheorem3}
Let $M$ and $N$ be two positive integers. Then there exists an entire function $A$ of order
\begin{equation*}
\rho=1+\frac{\log^2(M/N)}{4\pi^2}
\end{equation*}
such that the associated Bank--Laine function $E=f_1f_2$ for two normalized solutions $f_1$ and $f_2$ of \eqref{bank-laine0} satisfies $\lambda(E)=\rho(E)=\rho$ and $\lambda(f_1)=\lambda(f_2)=\rho$.
\end{theorem}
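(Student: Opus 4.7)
The plan is to extend the quasiconformal-surgery construction of Bergweiler and Eremenko in \cite{Bergweilereremenko2017} by gluing two meromorphic functions from the family \eqref{recuree1pm9}, rather than two entire ones. In \cite{Bergweilereremenko2017}, two functions of the form $g_{0,n}$ are glued, producing an entire locally univalent $F$ and hence a special Bank--Laine function in which $f_1$ has no zeros. To obtain $\lambda(f_1) = \lambda(f_2) = \rho$ as required here, I would glue $g_{m_1,n_1}$ and $g_{m_2,n_2}$ with $m_i, n_i \geq 1$: the poles of the resulting $F = f_2/f_1$ are then zeros of $f_1$, and the zeros of $F$ are zeros of $f_2$, in principle distributed at the same rate.

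The construction proceeds in three steps. First, choose positive integers $m_1, n_1, m_2, n_2$ so that (i) the two rational parts $R_{m_i,n_i}$ in \eqref{recuree1pm8} have balanced densities of zeros and poles in the $w$-plane, and (ii) the exponential asymptotics of $h_{m_1,n_1}$ and $h_{m_2,n_2}$ differ by a factor whose logarithm is proportional to $\log(M/N)$. Second, partition the $w = e^z$ plane into two sectors along a pair of logarithmic spirals whose pitch is determined by $\log(M/N)/(2\pi)$, define a quasiregular meromorphic function equal to $h_{m_1,n_1}$ (respectively, $h_{m_2,n_2}$) outside a thin interpolation annulus, and straighten the associated Beltrami coefficient via the measurable Riemann mapping theorem. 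Pulling back by the exponential yields a locally univalent meromorphic function $F$ on $\mathbb{C}$, from which one obtains $A = S(F)/2$ via \eqref{schwarzdifferential} together with the normalized pair $f_1^2 = 1/F'$, $f_2^2 = F^2/F'$, and hence the desired Bank--Laine function $E = f_1 f_2 = F/F'$.

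The main obstacle is the verification of the order and of the zero counts. The growth estimate $\rho(A) = \rho$ will follow from the logarithmic-spiral geometry as in \cite[Section~4]{Bergweilereremenko2017}: along rays tangent to the spiral, $|F(z)|$ grows like $\exp(r^\rho)$ with $\rho = 1 + \log^2(M/N)/(4\pi^2)$, and the same holds for $|A(z)|$. The harder point is showing $\lambda(f_1) = \lambda(f_2) = \rho$ without a spurious logarithmic factor such as the $(\log r)^{-3}$ appearing in \eqref{bank-laine1kktf}. This reduces to counting zeros of $P_{n_i}$ and $Q_{m_i}$ inside $r$-sized sectors of the $w$-plane, each of which yields $\asymp r^\rho$ points once $m_i$ and $n_i$ are chosen proportionally to $M$ and $N$; one must then check that the quasiconformal straightening preserves the exponents of convergence of these zero and pole sets. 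Finally $\rho(E) = \rho$ and $\lambda(E) = \rho$ follow, since all zeros of $E$ are zeros or poles of $F$ and have density $\asymp r^\rho$.
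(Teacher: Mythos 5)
Your overall plan---glue $g_{m_1,n_1}$ and $g_{m_2,n_2}$ with nontrivial rational parts so that both $f_1$ and $f_2$ acquire zeros, straighten via the measurable Riemann mapping theorem, and invoke the Teichm\"uller--Wittich--Belinskii theorem---is indeed the paper's strategy, and your observation that poles of $F$ are zeros of $f_1$ while zeros of $F$ are zeros of $f_2$ is correct. But the geometry you describe in Step~2 cannot produce the stated order. Each $h_{m_i,n_i}(w)=R_{m_i,n_i}(w)e^{w}$ has only \emph{finitely many} zeros and poles in the $w$-plane, so any quasiregular function agreeing with $h_{m_1,n_1}$ or $h_{m_2,n_2}$ off a compact interpolation set, and its straightened meromorphic version $\tilde H$, also have finitely many. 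Pulling back by the exponential then gives $F(z)=\tilde H(e^{z})$, whose zero and pole sets are finite unions of arithmetic progressions; hence $n(r,0,F)\asymp n(r,\infty,F)\asymp r$ and $\lambda(f_1)=\lambda(f_2)\leq 1$, which is strictly less than $\rho$ whenever $M\neq N$.

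The missing ingredient is the power map $p(z)=z^{\mu}$ with $\mu=\frac{2\pi}{4\pi^2+\log^2\kappa}(2\pi-i\log\kappa)$ and $\kappa=M/N=(m_2+2n_2+1)/(m_1+2n_1+1)$. The paper never pulls back by an exponential: it works directly in the plane where $g_{m,n}=h_{m,n}\circ\exp$ is defined (so the exponential is already absorbed into $g_{m,n}$), glues $g_{m_1,n_1}$ and $g_{m_2,n_2}$ on the lower and upper half-planes, and transports this gluing to the spiral sectors $\hat{\mathbb{G}},\check{\mathbb{G}}$ via $h=p^{-1}$. The welding identity $p(x+i0)=p(\kappa x-i0)$ for $x<0$ is what lets the affine asymptotics $\phi(x)=\kappa x+c+O(e^{-\delta|x|/2})$ of Lemma~\ref{Lemma 0} yield a quasiconformal interpolation with the decay Teichm\"uller--Wittich--Belinskii needs, and it is $h(z)=z^{1/\mu}$---see \eqref{recuree1pm47}--\eqref{recuree1pm49}---that converts the linear zero and pole densities of $g_{m,n}$ into $\asymp r^{1/\mathrm{Re}(\mu)}=r^{\rho}$. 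Two smaller remarks: requiring $m_i,n_i\geq 1$ is too strong (for instance $M=1$ forces $m_2=n_2=0$); the paper needs only that at least one of $m_1,m_2$ and at least one of $n_1,n_2$ is nonzero. And the $(\log r)^{-3}$ loss you worry about does not occur here---it belongs to the infinite-gluing construction of Theorem~\ref{maintheorem2}, whereas in this two-piece gluing the counts come out cleanly as $\asymp r^{\rho}$.
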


In the proof of Theorem~\ref{maintheorem3}, we have actually shown that the numbers of zeros of $f_1$ and $f_2$ actually satisfy $n(r,0,f_1) \asymp r^{\rho}$ and $n(r,0,f_2) \asymp r^{\rho}$ respectively. Thus $n(r,0,f_1)\asymp n(r,0,f_2)$. We remark that the two numbers $M$ and $N$ can also be chosen suitably so that $f_1$ has no zeros and thus the corresponding Bank--Laine function $E$ is special. We may even choose $M$ and $N$ suitably so that $f_2$ has no zeros.

We may also use the method of quasiconformal surgery in~\cite{Bergweilereremenko2019,Zhang2024} to construct Bank--Laine functions $E=f_1f_2$ such that $n(r,0,f_1)\asymp n(r,0,f_2)$ when $\rho(A)=\rho\in(n/2,n)$ for an integer $n\geq 2$. To see this, we only need to note that the function $G_1$ defined in \cite[Section~5]{Bergweilereremenko2019} can be replaced by $1/G_0$ since the gluing there only use the fact that $G_1=1/G_0$ for $z\in J$. There are two major differences between these two kinds of Bank--Laine functions. First, the Bank--Laine functions $E$ constructed in Theorem~\ref{maintheorem2} and its extensions have completely regular growth while the Bank--Laine functions $E$ constructed in Theorem~\ref{maintheorem3} satisfy $E(\Gamma_1(t))\to 0$ and $E(\Gamma_2(t))\to\infty$ as $t\to\infty$ on certain logarithmic spirals $\Gamma_1$ and $\Gamma_2$. Second, the Bank--Laine functions $E$ constructed in Theorem~\ref{maintheorem2} and its extensions satisfy $\rho(A)=\rho\in(n/2,n)$ for an integer $n\geq 1$ and $\lambda(E)=\rho(E)=\lambda$ for a number $\lambda$ such that $n/\rho+n/\lambda=2$ and thus $\lambda(E)=\rho(E)>\rho(A)$ in general while the Bank--Laine functions $E$ constructed in Theorem~\ref{maintheorem3} always satisfy $\lambda(E)=\rho(E)=\rho(A)$. Bank--Laine functions $E$ such that $n/\rho(A)+n/\lambda(E)<2$ can also be constructed by an extension of the method in~\cite{Bergweilereremenko2019}, which are obtained after this paper is finished.

By looking at Theorem~\ref{maintheorem1}-Theorem~\ref{maintheorem3}, one notes that the existence of Bank--Laine functions in the case $\rho(A)$ is an integer is still not clear. Langley~\cite{Langley1998-99} pointed out, using \emph{Lagrange interpolation}, we can make Bank--Laine functions $E=Pe^{Q}$ with $P,Q$ being polynomials. However, it is not known if there exist Bank--Laine functions $E$ such that $\lambda(E)=\lambda<\rho(E)=n$ for any preassigned number $\lambda$ and positive integer $n$. It seems that the method of quasiconformal surgery in \cite[Subsection~5.3]{Bergweilereremenko2019} is invalid when dealing with this case. In this paper we shall describe a new method of quasiconformal surgery for construction of such kind of Bank--Laine functions. We prove the following

\begin{theorem}\label{maintheorem4}
Let $n\in \mathbb{N}$ be a positive integer and $\lambda_1,\lambda_2\in[0,n]$ be two numbers such that $\lambda_1\leq \lambda_2$. Then there exists an entire function $A$ of order $\rho(A)=n$ such that the associated Bank--Laine function $E=f_1f_2$ for two normalized solutions $f_1$ and $f_2$ of \eqref{bank-laine0} satisfies $\lambda(f_1)=\lambda_1$, $\lambda(f_2)=\lambda_2$ and $\lambda(E)=\lambda_2\leq \rho(E)=n$.
\end{theorem}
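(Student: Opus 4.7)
The plan is to construct, via a new scheme of quasiconformal surgery adapted to the integer-order case, a locally univalent meromorphic function $F:\mathbb{C}\to\widehat{\mathbb{C}}$ whose Schwarzian $S(F)=2A$ is entire of order exactly $n$, and whose poles and zeros form sequences of exponents of convergence $\lambda_1$ and $\lambda_2$ respectively. Recalling the correspondence reviewed around \eqref{schwarzdifferential}, a normalized pair $(f_1,f_2)$ is then recovered from $F$ via $f_1^2=1/F'$ and $f_2^2=F^2/F'$; since poles of $F$ are zeros of $f_1$ and zeros of $F$ are zeros of $f_2$, the Bank--Laine function $E=f_1f_2=F/F'$ satisfies $\lambda(E)=\max\{\lambda_1,\lambda_2\}=\lambda_2$ and $\rho(E)=\rho(A)=n$, as required.

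For the quasiregular model $\tilde F$, I would partition $\mathbb{C}$ into $2n$ equal sectors $S_j=\{z:\arg z\in(j\pi/n,(j+1)\pi/n)\}$ and, on each $S_j$, install a rotated and rescaled copy of one of the building blocks $g_{m,k}$ from \eqref{recuree1pm9} (in the spirit of the spiral gluing of \cite[Section~5]{Bergweilereremenko2019}, but along rays rather than along logarithmic spirals, since the integer order $n$ excludes the spiral construction). By alternating between two data pairs $(m_1,k_1)$ and $(m_2,k_2)$ across adjacent sectors, and by suitably adjusting the pull-back map from the sector into the natural domain of $g_{m,k}$, I would arrange that the number of poles of $\tilde F$ in $|z|\le r$ grows like $r^{\lambda_1}$ and the number of zeros like $r^{\lambda_2}$. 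Across each bounding ray $\arg z=j\pi/n$ the two adjacent sector definitions must be interpolated quasiconformally in a thin collar, using a partition of unity together with M\"obius adjustments matching the asymptotic values, producing a quasiregular $\tilde F:\mathbb{C}\to\widehat{\mathbb{C}}$ whose complex dilatation $\mu$ is supported in the collars and has $\|\mu\|_\infty<1$. The measurable Riemann mapping theorem then yields a quasiconformal $\varphi:\mathbb{C}\to\mathbb{C}$ with $\mu_\varphi=\mu$ and standard normalization at $0$ and $\infty$, so that $F:=\tilde F\circ\varphi^{-1}$ is locally univalent meromorphic on $\mathbb{C}$.

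The verification then breaks into two parts: first, Teichm\"uller--Wittich--Belinskii-type estimates, with the collars chosen to have vanishing area density at infinity, would show that $\varphi$ is asymptotically conformal, so that $F$ inherits the pole and zero densities of $\tilde F$ and thus has the prescribed exponents $\lambda_1$ and $\lambda_2$; second, one must show that $A=S(F)/2$ is entire of order exactly $n$, using that on each sector interior $S(F)$ is obtained by a conformal pushforward (off the collars $\varphi$ is holomorphic) from the explicitly computable Schwarzians of the building blocks, which themselves have order $n$ in the ambient variable $z$. The principal obstacle is this last order-computation combined with the need to cover the full range $0\le\lambda_1\le\lambda_2\le n$: the surgery of \cite[Subsection~5.3]{Bergweilereremenko2019} rests on a logarithmic-spiral gluing that forces $\rho(A)$ to be non-integer and rigidly couples it to $\lambda(E)$ via the identity $n/\rho(A)+n/\lambda(E)=2$, leaving the integer case with $\lambda(E)<n$ untreated. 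The promised new surgery must therefore use ray-based seams with independently tunable data in adjacent sectors and design the collars so carefully that their contribution to the Schwarzian affects only lower-order terms, leaving the leading order fixed at $n$ while permitting any $(\lambda_1,\lambda_2)$ in the target rectangle.
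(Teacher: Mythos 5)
Your proposal correctly sets up the Schwarzian framework and recognizes that the spiral gluing of Bergweiler--Eremenko forces the coupling $n/\rho(A)+n/\lambda(E)=2$ and therefore cannot produce $\lambda(E)<n$ with $\rho(A)=n$; you also correctly anticipate that the extension to $n\geq 2$ proceeds via $z\mapsto z^n$ and $2n$ sectors. However, there is a genuine gap in the mechanism you propose for achieving arbitrary $(\lambda_1,\lambda_2)\in[0,n]^2$.

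You suggest ``alternating between two data pairs $(m_1,k_1)$ and $(m_2,k_2)$ across adjacent sectors'' with suitable pull-back maps. This cannot work: each building block $g_{m,n}$ from \eqref{recuree1pm9} has pole-counting function growing \emph{linearly} in $r$ (since $n(r,\infty,g_{m,n})\sim mr/\pi$, the poles coming from the $2\pi i$-periodicity), so once you fix finitely many data pairs, the counting functions of poles and zeros of the glued model have exponents of convergence rigidly determined by the pushforward map, not by the combinatorics of two fixed pairs. Fixing two data pairs and pushing forward by a power (or spiral) map gives the Theorem~\ref{maintheorem3} situation where $\lambda(f_1)=\lambda(f_2)=\rho$; you cannot detach $\lambda_1$ and $\lambda_2$ from $n$, let alone realize $\lambda_1=0$, without a fundamentally different mechanism. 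The paper's key new idea, absent from your proposal, is to let the data pair vary as $(m_k,n_k)$ along a sequence of horizontal strips $\hat\Xi_k$: Lemma~\ref{Lemma a} produces $\{0,1\}$-valued sequences $(m_k)$, $(n_k)$ whose partial sums $\sum_{i\le k}m_i$, $\sum_{j\le k}n_j$ grow like $(\log k)^2$ or $k^{\lambda_1}$, $k^{\lambda_2}$, and these partial sums directly control $n(r,\infty,F)$ and $n(r,0,F)$ in \eqref{recuree1pm118  fajr19} and \eqref{recuree1pm118  fajr19jpoi}. The ``sparse but unbounded'' placement of blocks with poles/zeros is what decouples $\lambda_1,\lambda_2$ from $n$. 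In parallel, to keep $\rho(A)=n$ fixed while the pole/zero densities drop, the paper replaces the spiral map $z\mapsto z^\mu$ by piecewise affine scalings $\chi_{m_k,n_k}$ (equation \eqref{diffeo  fajr  ayt}), chosen so that the strip widths add up to approximately $l\cdot k$ and the compressions stay bounded, giving $\mathcal N_k\sim lk$ in \eqref{recuree1pm79-0qjw-1} and hence order exactly $1$ (then $n$, after $z\mapsto z^n$). Your collars-with-partition-of-unity interpolation across rays is also a much cruder device than the explicit affine interpolation $(1-t)x+t\psi_k(x)$ used in the paper, which is needed both to make the dilatation integrable against $dxdy/(x^2+y^2)$ (the Teichm\"uller--Wittich--Belinskii hypothesis) and to track the order and zero/pole counts precisely.
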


In the proof of Theorem~\ref{maintheorem4}, we have actually shown that the numbers of zeros of $f_1$ satisfies $n(r,0,f_1)\asymp (\log r)^2$ when $\lambda_1=0$ or $n(r,0,f_1)\asymp r^{\lambda_1}$ when $0<\lambda_1\leq n$. The number of zeros of $f_2$ has similar asymptotic behaviors. By slightly modifying the proof of Theorem~\ref{maintheorem4}, $f_1$ or $f_2$ can be chosen to have only finitely many zeros in the plane, as pointed out in Subsection~\ref{Preliminary lemmas}. The Bank--Laine function $E$ constructed in Theorem~\ref{maintheorem4} are the first nontrivial examples such that $\lambda(E)\leq \rho(A)$ and $\lambda(E)=\lambda$ is not zero or integer. These examples show that Bank--Laine function $E$ of integer order indeed have value distribution properties not covered by Theorem~\ref{maintheorem1}.

As in \cite[Theorem~3]{Zhang2024}, we can describe the asymptotic behaviors of the Bank--Laine function $E$ in Theorem~\ref{maintheorem3} and also the corresponding coefficient function $A$ of the second order differential equation~\eqref{bank-laine0} in the case $n=1$. We have the following

\begin{theorem}\label{maintheorem4-asym}
When $n=1$, the functions $A$ and $E$ in Theorem~\ref{maintheorem4} can be chosen so that
\begin{equation*}
\begin{split}
\frac{1}{2}\log |A(re^{i\theta})|\sim \log\frac{1}{|E(re^{i\theta})|} \sim \frac{1}{l}r\cos(\theta) \quad \text{for} \quad |\theta|\leq (1-\varepsilon)\frac{\pi}{2},
\end{split}
\end{equation*}
where $l$ is an integer such that $l=1$ when $\lambda_1\leq \lambda_2<1$ and $l=3$ when $\lambda_1<\lambda_2=1$ and $l=4$ when $\lambda_1=\lambda_2=1$, while
\begin{equation*}
\begin{split}
\log |E(-re^{i\theta})|\sim \frac{1}{l}r\cos(\theta) \quad \text{for} \quad |\theta|\leq (1-\varepsilon)\frac{\pi}{2}
\end{split}
\end{equation*}
and
\begin{equation*}
\begin{split}
|A(-re^{i\theta})|\sim \frac{1}{4l^2} \quad \text{for} \quad |\theta|\leq (1-\varepsilon)\frac{\pi}{2}
\end{split}
\end{equation*}
uniformly as $r\to\infty$, for any $\varepsilon\in(0,1)$.
\end{theorem}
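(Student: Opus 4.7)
The plan is to extract the asymptotic form of the locally univalent entire function $F$ produced by the construction in the proof of Theorem~\ref{maintheorem4} specialized to $n=1$, and then compute $E=F/F'$ and $A=\tfrac{1}{2}S(F)$ directly. The integer $l\in\{1,3,4\}$ encodes the scaling factor with which the building blocks $g_{m,n}$ from \eqref{recuree1pm9} enter the construction in each of the three sub-cases.

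First I would revisit the construction specialized to $n=1$ in each regime and verify that it can be arranged so that, outside a neighborhood of the imaginary axis of bounded width (the strip on which the quasiconformal interpolation is supported), the resulting $F$ has the form $F=\psi\circ\widetilde{F}$, where $\psi$ is a conformal map asymptotic to the identity and $\widetilde{F}(z)=R(e^{z/l})\exp(e^{z/l})$ for a rational function $R$ with a nonzero finite limit at $\infty$. The scaling $l$ is the one forced on the building blocks by the prescribed zero density: $l=1$ suffices when $\lambda_1\leq\lambda_2<1$, so that the original unscaled $g_{m,n}$'s can be glued; the cases $\lambda_1<\lambda_2=1$ and $\lambda_1=\lambda_2=1$ require dilations in order to place enough zeros of $f_2$ (respectively of $f_1$ and $f_2$) to reach the critical density $\lambda=1$, and counting the zeros placed by the gluing along a strip of width proportional to $l$ shows that the relevant dilations correspond precisely to $l=3$ and $l=4$.

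Given this, for $z=re^{i\theta}$ with $|\theta|\leq(1-\varepsilon)\pi/2$ one has $\operatorname{Re}(z/l)\to+\infty$, so $\widetilde{F}\sim c\exp(e^{z/l})$ and a direct computation gives
\begin{equation*}
\frac{F'}{F}=\frac{1}{l}e^{z/l}(1+o(1)),\qquad \frac{F''}{F'}=\frac{1}{l}\bigl(1+e^{z/l}\bigr)(1+o(1)),
\end{equation*}
\begin{equation*}
S(F)=-\frac{1}{2l^2}\bigl(1+e^{2z/l}\bigr)(1+o(1)).
\end{equation*}
Hence $A=\tfrac{1}{2}S(F)\sim-\tfrac{1}{4l^2}e^{2z/l}$ and $E=F/F'\sim l\,e^{-z/l}$ in this region, yielding $\tfrac{1}{2}\log|A(re^{i\theta})|\sim r\cos\theta/l$ and $\log(1/|E(re^{i\theta})|)\sim r\cos\theta/l$. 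For $z=-re^{i\theta}$ with $|\theta|\leq(1-\varepsilon)\pi/2$, instead $e^{z/l}\to 0$, so $S(F)\to-1/(2l^2)$, which gives $|A(-re^{i\theta})|\sim 1/(4l^2)$, while $|E|\sim l\,e^{r\cos\theta/l}$ yields $\log|E(-re^{i\theta})|\sim r\cos\theta/l$.

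The main obstacle is the first step: showing that the quasiconformal correction $\psi$ really is asymptotic to the identity together with its first three derivatives, uniformly in every sub-sector that avoids a fixed angular neighborhood of the imaginary axis, so that $\psi$ contributes only a negligible perturbation to $S(F)$. This relies on the fact that the Beltrami coefficient of the surgery map has bounded support along $\mathbb{R}$, combined with standard distortion estimates for $K$-quasiconformal mappings whose dilatation vanishes outside a bounded strip; the argument parallels the analogous assertions in \cite[Theorem~3]{Zhang2024} and in \cite[Section~6]{Bergweilereremenko2017}.
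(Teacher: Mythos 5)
Your proposal has two genuine gaps.

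First, the reduction to $F=\psi\circ\widetilde{F}$ with $\widetilde{F}(z)=R(e^{z/l})\exp(e^{z/l})$ and $\psi$ asymptotic to the identity is not established, and the description of the surgery's dilatation support is not accurate. In Section~\ref{Proof of maintheorem4} the quasiregular map $G$ patches distinct $g_{m_k,n_k}$'s together along the infinitely many horizontal lines $y=2\pi\mathcal{N}_k$, and it also carries anisotropic dilatation on the half-strips $\hat{\Xi}_k^{+}$ whenever $N_k\neq l$; the Beltrami coefficient of the surgery is therefore supported on a family of horizontal strips escaping to infinity in both the $x$- and the $y$-direction, not on a bounded strip along the imaginary axis. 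The assertion that it ``has bounded support along $\mathbb{R}$'', and the suggested control of $\psi$ by distortion estimates for quasiconformal maps ``whose dilatation vanishes outside a bounded strip'', do not apply. Proving that $S(\tau^{-1})\to 0$ along the relevant rays is precisely the content that the omitted proof (following \cite[Theorem~3]{Zhang2024}) must supply, using the Teichm\"uller--Wittich--Belinskii estimate together with the sparseness of the nonconformal strips.

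Second, the left-half-plane constants you obtain do not match the specific rational factors $R_{m,n}$ that actually enter the construction. You deduce $S(F)\to -1/(2l^2)$ by plugging $e^{z/l}\to 0$ into the expansion $F'/F\sim l^{-1}e^{z/l}$ derived for $\text{Re}\,z\to +\infty$; this tacitly requires that $R(w)e^{w}-1$ have a nonzero linear term at $w=0$. But the functions $h_{m,n}(w)=R_{m,n}(w)e^{w}$ in \eqref{recuree1pm9} satisfy $h_{m,n}(w)=1+cw^{m+2n+1}+O(w^{m+2n+2})$, so the first $m+2n$ Taylor coefficients vanish identically. Combined with the left-half-plane scaling $z\mapsto z/N_k$ in \eqref{diffeo  fajr  a a;lrk-2}, this gives $V(z)-1\sim c'e^{z}$ and
\begin{equation*}
S(V)(z)=N_k^{-2}\,S(g_{m_k,n_k})\!\left(\tfrac{z}{N_k}+s_{m_k,n_k}\right)\longrightarrow N_k^{-2}\Bigl(-\tfrac{N_k^{2}}{2}\Bigr)=-\tfrac12,
\end{equation*}
independently of $l$; correspondingly $F'/F\sim c'e^{z}$ rather than $l^{-1}e^{z/l}$. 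You should redo the left-half-plane computation with the correct near-origin expansion of $h_{m,n}$ and the actual scaling, and reconcile the resulting constants with those displayed in the statement.
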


The proof for Theorem~\ref{maintheorem4-asym} is only a slight modification of that in \cite[Theorem~3]{Zhang2024} for the case $\rho=\sigma=1$ and thus will be omitted. The proof is based on the fact that the Schwarzian derivative $S(F)$ can be factored as $S(F)=B(F/F')/2$, where
\begin{equation*}
\begin{split}
B(E):=-2\frac{E''}{E}+\left(\frac{E'}{E}\right)^2-\frac{1}{E^2}.
\end{split}
\end{equation*}
See Bank and Laine~\cite{Banklaine1982,Banklaine1982-2}. The general solution of the differential equation $B(E)=4A$, that is, of the equation
\begin{equation}\label{recuree1pm7}
\begin{split}
4A=-2\frac{E''}{E}+\left(\frac{E'}{E}\right)^2-\frac{1}{E^2}
\end{split}
\end{equation}
is a product of two normalized solutions of \eqref{bank-laine0}. The results in \cite[Theorem~3]{Zhang2024} shows that the functions $A$ and $E$ constructed in Theorem~\ref{maintheorem3} are actually of completely regular growth (in the sense of Levin and Pfluger). This is obvious for $E$ and $A$ when $\lambda_1\leq \lambda_2<n$. In fact, by the classical Hadamard factorization theorem, Theorem~\ref{maintheorem4-asym} together with its extension to the general case in Subsection~\ref{Completion of the proof} implies that $E(z)=H(z)e^{p(z)}$, where $H$ is the canonical product formed by the zeros of $E$ and $p$ is a polynomial of degree $n$ of the form $p(z)=z^n/l+\cdots$.

While the Bank--Laine functions $E=f_1f_2$ constructed in previous three theorems all satisfy $\rho(E)<\infty$, by the linear independence of $f_1$ and $f_2$, the product $E_c=f_1(cf_1+f_2)$ with a nonzero constant $c$ is also a Bank--Laine function and satisfies $\lambda(E_c)=\infty$. In the rest of this paper, we construct of Bank--Laine functions $E=f_1f_2$ such that $\rho(E)=\infty$ but $\lambda(f_1)<\infty$. More specifically, we shall first prove the following

\begin{theorem}\label{maintheorem5}
Let $n\in \mathbb{N}$ be a positive integer and $\rho\in (n/2,n)$ and $\lambda\in[0,\infty)$. Then there exists an entire function $A$ of order $\rho(A)=\rho$ such that the associated Bank--Laine function $E=f_1f_2$ for two normalized solutions $f_1$ and $f_2$ of \eqref{bank-laine0} satisfies $\lambda(f_1)=\lambda$ and $\lambda(f_2)=\infty$. Moreover, the Bank--Laine function $E_c=f_1(cf_1+f_2)$ satisfies $\lambda(E_c)=\infty$ for any constant $c$.
\end{theorem}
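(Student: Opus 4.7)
The plan is to extend the sectoral quasiconformal surgery used in Theorem~\ref{maintheorem2} (extended to arbitrary $n\geq 1$ as in \cite[Section~5]{Bergweilereremenko2019} and Subsection~\ref{Completion of the proof-ex}) by superimposing a sparse infinite sequence of additional local surgeries that force $f_2$, and each linear combination $cf_1+f_2$, to acquire zeros of infinite exponent of convergence. Fix $\rho\in(n/2,n)$ and $\gamma=n/(2\rho-n)>1$. First I would construct a base locally univalent meromorphic function $F_0=f_{2,0}/f_{1,0}$ by the sectoral gluing of blocks $g_{m,n}$ from \eqref{recuree1pm9} as in Theorem~\ref{maintheorem2}, with parameter $\delta=\lambda/(\rho\gamma)$ (or the obvious modification when $\lambda=0$), so that $\lambda(f_{1,0})=\lambda$ and $\rho(A_0)=\rho$ where $2A_0=S(F_0)$; this relies on Theorem~\ref{maintheorem2} together with the corrections of Section~\ref{Proof of maintheorem5}.

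Next I would choose a very sparse sequence of pairwise disjoint disks $D_k=D(a_k,r_k)$ with $|a_k|\to\infty$ slowly and $r_k$ small enough that the $D_k$ avoid the zeros and poles of $F_0$ and that the aggregated Beltrami coefficient will satisfy the Teichm\"uller--Wittich--Belinskii integrability condition
\[
\int_{\{|z|>R\}}\frac{|\mu(z)|}{|z|^2}\,dA(z)<\infty.
\]
On each $D_k$ I would excise $F_0|_{D_k}$ and glue in, via the annular quasiconformal interpolation of Bergweiler--Eremenko, a rescaled translate of a block $g_{m_k,n_k}$ from \eqref{recuree1pm9} with $(m_k,n_k)$ tending to infinity sufficiently fast. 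Each insertion contributes many new zeros of $F$ inside $D_k$, and its image in $\widehat{\mathbb{C}}$ omits only a controlled pair of exceptional values that may be made to drift to infinity as $k\to\infty$. Solving the Beltrami equation then yields a quasiconformal homeomorphism $\phi$ that is normalized and asymptotically conformal at $\infty$; setting $F=F_0\circ\phi^{-1}$ and $2A=S(F)$ produces the candidate.

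To complete the proof I would verify the three conclusions. The asymptotic conformality $\phi(z)/z\to 1$ preserves $\rho(A)=\rho$ and $\lambda(f_1)=\lambda$, since the poles of $F$ are $\phi$-images of the poles of $F_0$ together with at most finitely many newly created poles in each $D_k$. The condition $\lambda(f_2)=\infty$ follows from the fact that the newly created zeros of $F$ inside $D_k$ grow sufficiently fast against the slow growth of $|a_k|$, so that $n(r,0,F)$ dominates every power of $r$. For $\lambda(E_c)=\infty$ for every $c\in\mathbb{C}$, one uses that the exceptional pair of each inserted block drifts to infinity with $k$; a diagonalization over $c$ then yields that every value $-c$ lies in the image of $F|_{D_k}$ for all sufficiently large $k$, and the large number of preimages inside each $D_k$ forces $\lambda(F^{-1}(-c))=\infty$, which in turn gives $\lambda(E_c)=\infty$.

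The principal difficulty is coordinating the single Beltrami coefficient $\mu$ across infinitely many localized surgeries so that $\phi$ remains asymptotically conformal at $\infty$, which is needed to preserve $\rho(A)$ and $\lambda(f_1)$, while the inserted blocks collectively exhaust $\widehat{\mathbb{C}}$ with super-polynomial density, which is needed for the universal value-distribution property of the $E_c$. Balancing the sparsity of $\{D_k\}$ against the size and complexity of each inserted block is the most delicate step and will likely require a careful inductive construction of $(a_k,r_k,m_k,n_k)$ in which at stage $k$ one first ensures that the tail Beltrami norm is controlled and then chooses the inserted block large enough to cover a prescribed disk of radius $k$ in $\mathbb{C}$.
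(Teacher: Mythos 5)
Your proposal is a genuinely different strategy from the paper's, and it contains several fundamental gaps that I do not see how to fill.

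The paper does not perform local disk surgeries at all. It keeps the \emph{same} sectoral gluing geometry as in Theorem~\ref{maintheorem2}, but replaces the building block $g_{m,n}$ by $\hat{g}_{m,n}=(g_{m,n}+1)/2$ in the upper half--plane strips with $k\geq 3$ (see \eqref{recuree1pm1309hb-1-kjh}--\eqref{recuree1pm1309hb-1-kjh-1}), while keeping $\check{g}_{m,n}=g_{m,n}$ in the lower half-plane. Since $g_{m,n}$ and $(g_{m,n}+1)/2$ are both increasing homeomorphisms of $\mathbb{R}$ onto $(1,\infty)$, the existing horizontal-line gluing and imaginary-axis gluing go through with only cosmetic changes (Lemmas~\ref{Lemma3}--\ref{Lemma5}), and the poles of the new blocks coincide with the poles of the old ones, so $\lambda(f_1)=\lambda$ is preserved essentially for free. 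The crucial point is that the zeros of $(g_{m,n}+1)/2$ are the $(-1)$-points of $g_{m,n}$, which by \eqref{characteristic N}--\eqref{characteristic N-3} satisfy $\log n(r,0,g_{m,n}+1)\asymp r$; this is what drives $\lambda(f_2)=\infty$, and the analogous count for $(-c)$-points of $g_{m,n}$ in the lower half-plane drives $\lambda(E_c)=\infty$ via \eqref{recover-arar}.

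Your disk-insertion scheme fails on several counts. First, there is no controllable way to glue a ``rescaled translate of $g_{m_k,n_k}$'' onto $F_0|_{D_k}$ along $\partial D_k$. Since $D_k$ avoids the zeros and poles of $F_0$ and $r_k$ must shrink fast enough for the Teichm\"uller--Wittich--Belinskii integral to converge, the trace $F_0|_{\partial D_k}$ is nearly constant; but you want the inserted block to cover a disk of radius $k$ in $\mathbb{C}$, so its boundary trace must take values far from a single point. Any quasiconformal annular interpolation joining these two traces would have dilatation that blows up, not merely a bounded dilatation on a shrinking annulus, and so the TWB condition will be violated rather than satisfied. Second, the zeros of $g_{m_k,n_k}$ themselves are the zeros of $P_{n_k}(e^z)$ and have only polynomial density per period strip, not the exponential density needed for $\lambda(f_2)=\infty$. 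The exponential density in the paper comes precisely from shifting by $(g_{m,n}+1)/2$, which you never do: inserting unmodified $g_{m_k,n_k}$ blocks cannot produce super-polynomially many zeros of $F$ no matter how you balance $(a_k,r_k,m_k,n_k)$. Third, to make $F|_{D_k}$ cover essentially all of $\widehat{\mathbb{C}}$ you must introduce poles of $F$ inside each $D_k$ whose count likewise grows exponentially, and these poles are zeros of $f_1$; this would destroy the claimed $\lambda(f_1)=\lambda<\infty$. Finally, the ``diagonalization over $c$'' for $\lambda(E_c)=\infty$ is not an argument at all; the paper's proof rests on the concrete identity $cf_1+f_2=f_1(c+F)$ and the exponential count of $(-c)$-points of $g_{m,n}$, and nothing in your sketch supplies a substitute.
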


We see that the Bank--Laine function $E=f_1f_2$ constructed in Theorem~\ref{maintheorem5} satisfies $\lambda(E)=\rho(E)=\infty$. In the proof, we have actually shown that the numbers of zeros of $f_1$ and $f_2$ satisfy $n(r,0,f_1)\asymp r^{\lambda}$ and $\log n(r,0,f_2)\asymp r^{\rho}$ respectively. To prove Theorem~\ref{maintheorem5}, we not only glue the functions $g_{m,n}$ as in the proof of Theorem~\ref{maintheorem2} but also glue two functions $\hat{g}_{m,n}=(g_{m,n}+1)/2$ and $\hat{g}_{m,n}=g_{m,n}$ along horizontal lines for certain pair $(m,2n)$ of two nonnegative integers $m$ and $n$. If we fix $\hat{g}_{m_2,n_2}=g_{m_2,n_2}$ for one pair $(m_2,2n_2)$ in the upper half-plane and $\check{g}_{m_1,n_1}=(g_{m_1,n_1}+1)/2$ for another pair $(m_1,2n_1)$ in the lower half-plane, then we may follow the same process as in the proof of Theorem~\ref{maintheorem3} to obtain: for a dense set of orders $\rho\in(1,\infty)$, there exists an entire function $A$ of order $\rho(A)=\rho$ such that the associated Bank--Laine function $E=f_1f_2$ for two normalized solutions $f_1$ and $f_2$ of \eqref{bank-laine0} satisfies $\lambda(f_1)<\infty$ and $\lambda(f_2)=\infty$. Moreover, the Bank--Laine function $E_c=f_1(cf_1+f_2)$ satisfies $\lambda(E_c)=\infty$ for any nonzero constant $c$.

By the proof of Theorem~\ref{maintheorem5}, the entire function $A$ and the associated Bank--Laine function $E$ inherit all the properties of those in Theorem~\ref{maintheorem2} in the lower half-plane in the case $\rho\in (1/2,1)$, as well as the asymptotic behaviors as in \cite[Theorem~3]{Zhang2024}. In fact, by checking the proof for \cite[Theorem~3]{Zhang2024} together with the definition $\hat{g}_{m,n}=(g_{m,n}+1)/2$, we easily find that these asymptotic behaviors also hold for $z=re^{i\theta}$ such that $|\theta|<(1-\varepsilon)\pi/(2\sigma)$ with $\sigma=\rho/(2\rho-1)$. In the region $\{z=re^{i\theta}: 0\leq \theta\leq\pi/(2\rho)\}$ of the plane, the function $E$ has much complicated behavior. There must be an infinite sequence $(z_n)$, $z_n=r_ne^{i\theta_n}$, $0\leq \theta_n\leq \pi/(2\rho)$ and $r_n\to\infty$, such that $\log |E(r_ne^{i\theta_n})|\geq cr_n^{\rho}$ for some positive constant $c$ and all $r_n$.

Theorem~\ref{maintheorem5} can be viewed as an extension of Theorem~\ref{maintheorem2}. If we consider the existence of Bank--Laine functions such that $\rho(E)=\infty$ but $\lambda(f_1)<\infty$ in the case $A$ has integer order, then a combination of the methods in the proof of Theorem~\ref{maintheorem4} and Theorem~\ref{maintheorem5} yields an extension of Theorem~\ref{maintheorem4}, namely the following

\begin{theorem}\label{maintheorem6}
Let $n\in \mathbb{N}$ be a positive integer and $\lambda\in[0,n]$. Then there exists an entire function $A$ of order $\rho(A)=n$ such that the associated Bank--Laine function $E=f_1f_2$ for two normalized solutions $f_1$ and $f_2$ of \eqref{bank-laine0} satisfies $\lambda(f_1)=\lambda$ and $\lambda(f_2)=\infty$. Moreover, the Bank--Laine function $E_c=f_1(cf_1+f_2)$ satisfies $\lambda(E_c)=\infty$ for any constant $c$.
\end{theorem}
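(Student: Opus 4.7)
The plan is to combine the integer-order quasiconformal surgery of Theorem \ref{maintheorem4}, which produces $\rho(A)=n$ with a prescribed finite $\lambda(f_1)=\lambda$, with the horizontal-strip gluing device used in the proof of Theorem \ref{maintheorem5} to force $\lambda(f_2)=\infty$. The construction in Theorem \ref{maintheorem4} yields a locally univalent meromorphic model $F_0$ by gluing appropriate choices of the model functions $g_{m,n}$ from \eqref{recuree1pm9} across a sectorial pattern of opening governed by $n$, with the parameters $(m,n)$ on the $f_1$-side chosen so that the resulting pole-counting function produces $n(r,0,f_1)\asymp(\log r)^2$ when $\lambda=0$ and $n(r,0,f_1)\asymp r^{\lambda}$ when $\lambda\in(0,n]$. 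I would leave the $f_1$-side of this construction untouched and modify only a single sector controlling the zeros of $f_2$.

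In the sector being modified I would carry out the Theorem \ref{maintheorem5} horizontal gluing: along a sequence of horizontal segments $\{y=y_k\}$ with $|y_k|\to\infty$ (with spacing growing faster than any polynomial, e.g.\ $y_k=e^{\alpha k}$), I would alternate between $g_{m,n}$ and its shifted version $\hat{g}_{m,n}=(g_{m,n}+1)/2$ in adjacent strips. The shift by $1/2$ is precisely what produces additional zeros of the resulting $F$ (hence of $f_2$) along an enumerable sequence of curves in each strip, whose cumulative density satisfies $\log n(r,0,f_2)\asymp r^{n}$, and hence $\lambda(f_2)=\infty$. I would then solve the Beltrami equation associated with the composite Beltrami coefficient $\mu$ supported in thin neighborhoods of the sectorial boundaries together with the horizontal segments; by the Measurable Riemann Mapping theorem, exactly as in \cite{Bergweilereremenko2017,Bergweilereremenko2019}, one obtains an asymptotically affine quasiconformal map $\varphi$ so that $F:=F_0\circ\varphi^{-1}$ is locally univalent and meromorphic on $\mathbb{C}$. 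The entire coefficient is then recovered from $S(F)=2A$, and the normalized pair from $f_1^2=1/F'$, $f_2^2=F^2/F'$.

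For the final assertion $\lambda(E_c)=\infty$ for every $c$, I would argue as in Theorem \ref{maintheorem5}: inside the horizontal strips $|f_2|$ grows much faster than any polynomial factor while $|f_1|$ has only moderate growth inherited from the $f_1$-sector, so along a cofinal subsequence of the zeros of $f_2$ the ratio $|f_1/f_2|$ is arbitrarily small. A Rouch\'e argument on small disks centered at those zeros then shows that $cf_1+f_2$ inherits zeros of density comparable to that of $f_2$ for every fixed $c$, which yields $\lambda(cf_1+f_2)=\infty$ and therefore $\lambda(E_c)=\infty$.

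The main obstacle I expect is the compatibility between the two surgeries, which is reminiscent of but distinct from the issue addressed in the corrected version of the lemmas to be provided in Section \ref{Proof of maintheorem5}. Concretely, one must verify that inserting the horizontal-strip gluing inside one sector does not spoil either the exact order-$n$ behavior of $A$, which is produced by the global sectorial pattern of $g_{m,n}$, or the counting-function asymptotics of $f_1$, which live in a disjoint sector. This should reduce to showing that $\mu$ can be chosen with $\|\mu\|_\infty$ small and with sufficiently thin support that $\varphi(z)=z+o(|z|)$ outside a narrow neighborhood of the horizontal arcs, so that the first-order asymptotics of $F$ and $F'$ in the $f_1$-sector, and hence the order of $A$, are preserved.
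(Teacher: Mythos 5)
Your core idea is the same as the paper's: keep the integer-order sectorial surgery of Theorem~\ref{maintheorem4} intact so that the poles of $F$ (i.e., zeros of $f_1$) are governed by the $(m_k)$ sequence with the desired density, and use the device from Theorem~\ref{maintheorem5} of replacing $g_{m,n}$ by $\hat{g}_{m,n}=(g_{m,n}+1)/2$ so that the zeros of $F$ (i.e., zeros of $f_2$) become solutions of $g_{m,n}=-1$, which have $\log n(r,0)\asymp r$ in every strip. This is indeed the engine the paper uses, and the paper even remarks after Theorem~\ref{maintheorem6} that fixing a single $g_{m,n}+1$ in the upper half-plane already forces $\lambda(E)=\infty$, which supports your idea of modifying ``one sector.''

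The actual implementation in the paper is, however, organized differently from what you propose, and the difference matters. The paper does not superpose a new family of horizontal gluing curves with super-polynomial spacing $y_k=e^{\alpha k}$ onto the Theorem~\ref{maintheorem4} model. Instead it keeps exactly the same strip structure $\hat{\Xi}_k$ at heights $y=2\pi\mathcal{N}_k$ (linear in $k$), the same sequences $(m_k),(n_k)$ and scaling maps $\chi_{m_k,n_k}$ as in Theorem~\ref{maintheorem4}, and merely redefines the building block in those upper-half-plane strips where $n_k=1$ by $\overline{\chi}_{m_k,n_k}(z)=(z+1)/2$, leaving the lower half-plane untouched (see \eqref{recuree1-main eq1--nmb4}--\eqref{recuree1-main eq3--nmb6}). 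The dilatation estimates from Subsection~\ref{Estimation of the dilatation} then apply verbatim, because the extra transitions from $g$ to $\hat g$ along the horizontal lines $y=2\pi\mathcal{N}_k$ are already covered by Lemma~\ref{Lemma 0} together with \eqref{recuree1pm12-new-1}--\eqref{recuree1pm12-new-2}, and the pole- and zero-counting arguments and the order calculation go through with only notational changes. Your approach of inserting a new family of gluing curves inside a single sector of the $z$-plane creates the very compatibility problem you flag at the end of your write-up (the horizontal lines in the preimage and the sector boundaries $\arg z=j\pi/n$ are geometrically incompatible after the map $z\mapsto z^n$), and this is precisely what the paper avoids by working inside the existing $y=2\pi\mathcal{N}_k$ structure. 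Also, the super-polynomial spacing is unnecessary: the dilatation integral already converges with linear spacing, since the transitions are $O(1)$ in thin strips.

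There is a concrete gap in your last step, the assertion $\lambda(E_c)=\infty$ for all $c$. You propose a Rouch\'e argument: on small circles around zeros of $f_2$, claim $|cf_1|<|f_2|$ and conclude that $cf_1+f_2$ has nearby zeros. This is not automatic: at a zero $z_0$ of $f_2$, by the Wronskian $f_1(z_0)f_2'(z_0)=1$, so $|cf_1(z_0)|<|f_2'(z_0)|\varepsilon$ amounts to $\varepsilon>|c|/|F'(z_0)|$, and you would further need to control the Taylor remainder and verify the disks are pairwise disjoint and stay inside the region where $F'$ has the required size. None of this is established in the proposal, and in particular the claim that ``$|f_1/f_2|$ is arbitrarily small along a cofinal subsequence of the zeros of $f_2$'' is stated but not proven, and it is vacuous at the zeros themselves. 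The paper's argument is more direct: writing $cf_1+f_2=f_1(c+F)$ with $F=G\circ\tau^{-1}$, it simply counts the $(-c)$-points of $G$ in the lower half-plane region $\tau(\check{\mathbb H})$, where $G$ is built from the unshifted $g_{m_k,n_k}$, using the same estimate $\log n(r,-c,g_{m_k,n_k})\asymp r$ established in Subsection~\ref{Completion of the proof-add}. If you retain your Rouch\'e route you must close those local estimates; otherwise you should argue via the value distribution of $F$ as in~\eqref{recover-arar}.
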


We see that the Bank--Laine functions $E=f_1f_2$ constructed in Theorem~\ref{maintheorem6} satisfies $\lambda(E)=\rho(E)=\infty$. In the proof, we have actually shown that the numbers of zeros of $f_1$ and $f_2$ satisfy $n(r,0,f_1)\asymp (\log r)^2$ or $n(r,0,f_1)\asymp r^{\lambda}$ and $\log n(r,0,f_2)\asymp r^{n}$ respectively. Also, the entire function $A$ and the associated Bank--Laine function $E$ inherit all the properties of those in Theorem~\ref{maintheorem4} in the lower half-plane in the case $\rho(A)=1$, as well as the asymptotic behaviors as in~Theorem~\ref{maintheorem4-asym}.

A question concerning the entire function $A$ in the second order differential equation \eqref{bank-laine0} was asked in \cite[Question~4.5]{Gundersen2017}: \emph{Is it possible to characterize the transcendental entire functions $A$ where \eqref{bank-laine0} admits a non-trivial solution $f_1$ with $\lambda(f_1)<\infty$, such that for every solution $f_2$ of \eqref{bank-laine0} that is linearly independent with $f_1$ we have $\lambda(f_2)=\infty$}? The constructions in Theorem~\ref{maintheorem5} and Theorem~\ref{maintheorem6} present such entire functions $A$ of finite order. The constructions there are quite flexible. For example, as noted in the proof of Theorem~\ref{maintheorem6}, since the number $n(\dot{r},0,g_{m,n}+1)$ of zeros of $g_{m,n}+1$ in only one fixed strip $\Pi_k=\{x+iy: x>0, 2\pi(k-1)<y<2\pi k\}$ satisfies $\log n(\dot{r},0,g_{m,n}+1)\asymp r$, it is enough to fix only one $g_{m,n}+1$ in the upper half-plane in the gluing to guarantee that the resulting Bank--Laine function $E$ satisfies $\lambda(E)=\infty$ in the proof of Theorem~\ref{maintheorem5} and Theorem~\ref{maintheorem6}. We have actually shown that the number $\limsup_{r\to\infty}(\log\log n(r,0,E)/\log r)$ for the Bank--Laine functions $E$ constructed there can also be preassigned.

In the remainder of this paper, we provide the proof for Theorem~\ref{maintheorem3}, Theorem~\ref{maintheorem4} Theorem~\ref{maintheorem5} and Theorem~\ref{maintheorem6} in Section~\ref{Proof of maintheorem3}-Section~\ref{Proof of maintheorem6}, respectively. In the following sections, we keep the permanent notation: $(f_1,f_2)$ is a normalized pair of solutions of \eqref{bank-laine0}, the quotient $F=f_2/f_1$ is a solution of $S(F)=2A$, and $E=f_1f_2=F/F'$ is a solution of \eqref{recuree1pm7}. In different sections, the notations $\phi,\psi,\tau,G$ and $F$ will represent different functions and the notations $M,N$ and $\kappa$ will represent different numbers.

\section{Proof of Theorem~\ref{maintheorem3}}\label{Proof of maintheorem3} 

For every pair $(m,2n)$ of two nonnegative integers $m$ and $n$, we consider the rational function $R_{m,n}$ in \eqref{recuree1pm8}. In this section, we fix two distinct pairs $(m_1,2n_1)$ and $(m_2,2n_2)$ and will sometimes omit them from notation. We shall assume that at least one of $m_1$ and $m_2$ is nonzero and at least one of $n_1$ and $n_2$ is nonzero. Though now $g_{m,n}$ may have some poles, the process of gluing $g_{m_1,n_1}$ and $g_{m_2,n_2}$ is almost the same as that in \cite{Bergweilereremenko2017}: by restricting $g_{m_1,n_1}$ to the lower half-plane $\check{\mathbb{H}}$ and $g_{m_2,n_2}$ to the upper half-plane $\hat{\mathbb{H}}$, we then paste them together, using a quasiconformal surgery, producing a locally univalent meromorphic function $F$. Then our Bank--Laine function will be $E=F/F'$ and thus $A=B(E)/4$ as in \eqref{recuree1pm7}. For our purpose of this paper, we will use some different notations from the ones used in \cite{Bergweilereremenko2017}.

By the properties of $g_{m,n}$ described in the introduction, we see that there exists an increasing diffeomorphism $\phi: \mathbb{R}\to \mathbb{R}$ such that
\begin{equation}\label{recuree-1orqr-1}
g_{m_2,n_2}(x)=(g_{m_1,n_1}\circ\phi)(x)
\end{equation}
for all $x\in \mathbb{R}$. Denote $M=m_2+2n_2+1$ and $N=m_1+2n_1+1$ for simplicity. Let
\begin{equation*}
\begin{split}
\kappa=\frac{M}{N}=\frac{m_2+2n_2+1}{m_1+2n_1+1}.
\end{split}
\end{equation*}
Though $g_{m,n}$ in \eqref{recuree1pm9} has some poles, we may use the arguments as in \cite{Bergweilereremenko2017} to show

\begin{lemma}\label{Lemma 0}
The asymptotic behaviors of the diffeomorphism $\phi$ satisfy:
\begin{eqnarray}
&\phi(x)=x+O(e^{-x/2}),        \quad\quad\quad\quad\quad &\phi'(x)=1+O(e^{-x/2}), \quad\quad x\to \infty,  \label{recuree1pm11}\\
&\phi(x)=\kappa x+c+O(e^{-\delta|x|/2}),    \quad\quad &\phi'(x)=\kappa+O(e^{-\delta|x|/2}),  \quad x\to -\infty,             \label{recuree1pm12}
\end{eqnarray}
with
\begin{equation*}
\begin{split}
c=\frac{1}{N}\log \left[\frac{\binom{m_1+2n_1}{m_1}}{\binom{m_2+2n_2}{m_2}}\frac{N!}{M!}\right] \quad \text{and} \quad \delta=\frac{1}{2}\min\{1,\kappa\}.
\end{split}
\end{equation*}

\end{lemma}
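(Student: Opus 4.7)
The plan is to read off the asymptotics of $\phi$ directly from the defining equation \eqref{recuree-1orqr-1}, using the explicit derivative formula \eqref{recuree1pm10}; the procedure is a direct adaptation of the argument in Bergweiler--Eremenko \cite{Bergweilereremenko2017}. The only subtlety is that we must check that the (possibly present) poles of $g_{m,n}$ off the real axis play no role, but this is immediate since the entire analysis takes place on $\mathbb{R}$, where the denominator $\sum_{i=0}^{m} A_i e^{ix}$ stays strictly positive and both $g_{m_1,n_1}$ and $g_{m_2,n_2}$ are genuine increasing diffeomorphisms of $\mathbb{R}$ onto $(1,\infty)$, so that $\phi$ is automatically a smooth diffeomorphism of $\mathbb{R}$.

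For $x\to -\infty$, the factors $e^{e^z}$ and $\sum_{i=0}^{m} A_i e^{iz}$ in \eqref{recuree1pm10} both tend to $1$ at the rate $1+O(e^z)$, so
\begin{equation*}
g'_{m,n}(z) = \frac{e^{Nz}}{\binom{m+2n}{m}(m+2n)!}\bigl(1 + O(e^z)\bigr), \qquad N = m+2n+1.
\end{equation*}
Integrating from $-\infty$ to $x$ gives $g_{m,n}(x) - 1 = [\binom{m+2n}{m}\,N!]^{-1} e^{Nx}(1+O(e^x))$. Writing this for both pairs $(m_1,n_1)$ and $(m_2,n_2)$ and taking logarithms of both sides of \eqref{recuree-1orqr-1} (applied to $g-1$) solves for
\begin{equation*}
N\phi(x) = Mx + \log\!\left[\frac{\binom{m_1+2n_1}{m_1} N!}{\binom{m_2+2n_2}{m_2} M!}\right] + O(e^x) + O(e^{\phi(x)}),
\end{equation*}
which delivers $\phi(x) = \kappa x + c + O(e^x + e^{\phi(x)})$ with the stated $\kappa$ and $c$. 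Since $\phi(x)\sim\kappa x$, the composite error collapses to $O(e^{\min(1,\kappa)x})\subset O(e^{-\delta|x|/2})$.

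For $x\to+\infty$, the double exponential factor dominates, so $\log g_{m,n}(x) = e^x + (2n-m)x + \log(B_{2n}/A_m) + O(e^{-x})$. Setting $\phi(x)=x+\eta(x)$ with $\eta(x)\to 0$ and rearranging $\log g_{m_2,n_2}(x)=\log g_{m_1,n_1}(\phi(x))$ yields $e^{\phi(x)}-e^x = O(x)$; since the left-hand side equals $e^x\eta(x)(1+O(\eta))$, we conclude $\eta(x) = O(xe^{-x})\subset O(e^{-x/2})$.

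The derivative estimates come by differentiating \eqref{recuree-1orqr-1} to $\phi'(x) = g'_{m_2,n_2}(x)/g'_{m_1,n_1}(\phi(x))$ and substituting the asymptotics of $g'_{m,n}$ from \eqref{recuree1pm10}. A short factorial identity causes the explicit prefactors (the binomials together with $A_m$ and $B_{2n}$) to cancel exactly, so one finds $\phi'(x)\to 1$ as $x\to+\infty$ and $\phi'(x)\to\kappa$ as $x\to-\infty$, with the errors absorbed into $O(e^{-x/2})$ and $O(e^{-\delta|x|/2})$ respectively. The one real obstacle is the bookkeeping needed to verify that all composite error terms fit inside the stated envelopes with $\delta = \frac{1}{2}\min(1,\kappa)$; beyond this the argument is routine.
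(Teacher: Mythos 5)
Your overall strategy matches the paper's: extract the asymptotics of $\phi$ from the defining relation $g_{m_2,n_2}(x)=g_{m_1,n_1}(\phi(x))$ by comparing asymptotic forms of $g_{m,n}$ at $\pm\infty$, then differentiate to get $\phi'$. For $x\to-\infty$ your route is a small but genuine alternative: you obtain $g_{m,n}(x)-1 = \bigl[\binom{m+2n}{m}N!\bigr]^{-1}e^{Nx}\bigl(1+O(e^x)\bigr)$ by integrating \eqref{recuree1pm10} from $-\infty$, whereas the paper derives the same expansion from the observation that $h_{m,n}'(0)=\cdots=h_{m,n}^{(m+2n)}(0)=0$ and Peano's form of Taylor's theorem at $w=0$; the two are essentially equivalent and yours is arguably more elementary.

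The real gap is in the derivative estimate for $x\to+\infty$. Writing $\phi'(x)=g'_{m_2,n_2}(x)/g'_{m_1,n_1}(\phi(x))$ and inserting $g'_{m,n}(z)\sim (B_{2n}/A_m)\,e^{e^z+(2n-m+1)z}$ leaves a surviving factor $e^{e^x-e^{\phi(x)}}$ in the quotient, and this is \emph{not} $1+o(1)$: since $\phi(x)-x=O(xe^{-x})$, one has $e^{\phi(x)}-e^x=O(x)$, so the factor is of size $e^{O(x)}$. No ``short factorial identity'' among $A_m$, $B_{2n}$ and the binomials touches this term. To recover $\phi'(x)\to 1$ you must either re-substitute the defining equation to express $e^x-e^{\phi(x)}$ through $x$ and $\phi(x)$ (after which the leftover factor becomes $e^{x-\phi(x)}=1+O(xe^{-x})$), or --- cleaner, and what the paper actually does --- differentiate the logarithm of \eqref{recuree-1orqr-1} to get
\begin{equation*}
\phi'(x)=\frac{g'_{m_2,n_2}(x)/g_{m_2,n_2}(x)}{g'_{m_1,n_1}(\phi(x))/g_{m_1,n_1}(\phi(x))},
\end{equation*}
in which the $e^{e^z}$ factors in numerator and denominator of each logarithmic derivative cancel identically before any asymptotic is invoked (cf.\ \eqref{recuree1pm16  bef0}). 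A secondary, smaller point: you set $\phi(x)=x+\eta(x)$ and ``with $\eta(x)\to 0$'' at the outset; the paper first deduces the crude a priori bound $2x/3\le\phi(x)\le 2x$ from the defining relation, which is the step that legitimises linearising $e^{\phi(x)}-e^x$.
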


\begin{proof}
In order to prove \eqref{recuree1pm11}, we note that
\begin{equation*}
\begin{split}
\log g_{m_2,n_2}(x)=e^{x}+O(x)=e^{x}\left[1+O(xe^{-x})\right], \quad x\to \infty.
\end{split}
\end{equation*}
The equation $g_{m_2,n_2}(x)=(g_{m_1,n_1}\circ\phi)(x)$ easily implies that $2x/3\leq \phi(x)\leq 2x$ for large $x$. Thus we also have
\begin{equation*}
\begin{split}
\log g_{m_1,n_1}(\phi(x))&=e^{\phi(x)}\left[1+O(\phi(x)e^{-\phi(x)})\right]=e^{\phi(x)}\left[1+O(\phi(x)e^{-2x/3})\right], \quad x\to \infty.
\end{split}
\end{equation*}
Combining the last two equations we obtain
\begin{equation*}
\begin{split}
e^{\phi(x)-x}=1+O(xe^{-2x/3}), \quad x\to \infty,
\end{split}
\end{equation*}
from which the first statement in \eqref{recuree1pm11} easily follows. For the second statement in \eqref{recuree1pm11} we use
\begin{equation}\label{recuree1pm16  bef0}
\begin{split}
\phi'=\frac{g'_{m_2,n_2}}{g_{m_2,n_2}}\frac{g_{m_1,n_1}\circ\phi}{g'_{m_1,n_1}\circ\phi}=e^{Mx-N\phi(x)}\frac{A_{m_2}B_{2n_2}}{A_{m_1}B_{2n_1}}\frac{\sum_{i=0}^{m_1}A_ie^{i\phi(x)}}{\sum_{i=0}^{m_2}A_ie^{ix}}\frac{\sum_{j=0}^{2n_1}B_je^{j\phi(x)}}{\sum_{j=0}^{2n_2}B_je^{jx}},
\end{split}
\end{equation}
so that, together with the first statement in \eqref{recuree1pm11},
\begin{equation}\label{recuree1pm17 bef1}
\begin{split}
\phi'&=e^{Mx-N\phi(x)+(m_1+2n_1)\phi(x)-(m_2+2n_2)x}\frac{1+O(e^{-\phi(x)})}{1+O(e^{-x})}\frac{1+O(e^{-\phi(x)})}{1+O(e^{-x})}\\
&=e^{O(e^{-x/2})}\left[1+O(e^{-\phi(x)})+O(e^{-x})\right]=1+O(e^{-x/2}), \quad x\to \infty.
\end{split}
\end{equation}
In order to prove \eqref{recuree1pm12} we note from \eqref{recuree1pm9} and \eqref{recuree1pm10} that
\begin{equation*}
\begin{split}
h'_{m,n}(w)=\frac{1}{\binom{m+2n}{m}(m+2n)!}\frac{e^ww^{m+2n}}{Q_m(w)^2}.
\end{split}
\end{equation*}
Since $h_{m,n}(w)$ is analytic on $[0,\infty)$, then by taking the derivatives of $h'_{m,n}(w)$ successively we find that $h'_{m,n}(0)=\cdots=h^{(m+2n)}_{m,n}(0)=0$. Thus by Peano's version of Taylor's theorem we have
\begin{equation*}
\begin{split}
h_{m,n}(w)=R_{m,n}(w)e^{w}=1+\frac{w^{m+2n+1}}{\binom{m+2n}{m}(m+2n+1)!}+O(w^{m+2n+2}), \quad w\to 0.
\end{split}
\end{equation*}
Hence
\begin{equation*}
\begin{split}
g_{m,n}(x)&=1+\frac{e^{(m+2n+1)x}}{\binom{m+2n}{m}(m+2n+1)!}+O(e^{(m+2n+2)x})\\
&=1+\frac{e^{(m+2n+1)x}}{\binom{m+2n}{m}(m+2n+1)!}\left[1+O(e^{x})\right], \quad x\to-\infty.
\end{split}
\end{equation*}
The equation $g_{m_2,n_2}(x)=g_{m_1,n_1}(\phi(x))$ now yields
\begin{equation*}
\begin{split}
\frac{\binom{m_2+2n_2}{m_2}}{\binom{m_1+2n_1}{m_1}}\frac{M!}{N!}e^{N\phi(x)-Mx}=1+O(e^{x})+O(e^{\phi(x)}), \quad x\to-\infty
\end{split}
\end{equation*}
and hence
\begin{equation*}
\begin{split}
\phi(x)=\frac{M}{N}x+\frac{1}{N}\log \left[\frac{\binom{m_1+2n_1}{m_1}}{\binom{m_2+2n_2}{m_2}}\frac{N!}{M!}\right]+O(e^{x})+O(e^{\phi(x)}), \quad x\to-\infty,
\end{split}
\end{equation*}
which gives the first statement in \eqref{recuree1pm12}. For the second statement in \eqref{recuree1pm12}, recall that $A_{m}B_{2n}=\frac{m!(2n)!}{[(m+2n)!]^2}=\frac{1}{\binom{m+2n}{m}(m+2n)!}$. Similarly as for the estimates in \eqref{recuree1pm17 bef1}, \eqref{recuree1pm16  bef0} now takes the form
\begin{equation*}
\begin{split}
\phi'(x)=\frac{A_{m_2}B_{2n_2}}{A_{m_1}B_{2n_1}}e^{Mx-N\phi(x)}\frac{1+O(e^{\phi(x)})}{1+O(e^{x})}\frac{1+O(e^{\phi(x)})}{1+O(e^{x})}
\end{split}
\end{equation*}
and thus, by the first statement in \eqref{recuree1pm12}, we obtain
\begin{equation*}
\begin{split}
\phi'(x)&=\frac{A_{m_2}B_{2n_2}}{A_{m_1}B_{2n_1}}e^{Mx-N(\kappa x+c+O(e^{-\delta|x|/2}))}\left[1+O(e^{\phi(x)})+O(e^{x})\right]\\
&=\frac{A_{m_2}B_{2n_2}}{A_{m_1}B_{2n_1}}e^{-Nc+O(e^{-\delta|x|/2})}\left[1+O(e^{\phi(x)})+O(e^{x})\right]=\kappa+O(e^{-\delta|x|/2}).
\end{split}
\end{equation*}

\end{proof}

Now we begin to define a quasiregular function in the same way as in \cite{Bergweilereremenko2019}. For the convenience of calculating the number of zeros of the desired Bank--Laine functions in the final part of the proof and also of the applications of the arguments in the later proof of Theorem~\ref{maintheorem4}-Theorem~\ref{maintheorem6}, below we shall keep as many as details.

Let $D=\mathbb{C}\setminus\mathbb{R}_{\leq 0}$, and $p:D\to \mathbb{C}$, $p(z)=z^{\mu}$, the principal branch of the power. Here $\mu$ is a complex number such that
\begin{equation}\label{recuree1pm23}
\begin{split}
\mu=\text{Re} (\mu)+i\text{Im}(\mu)=\frac{2\pi}{4\pi^2+\log^2\kappa}(2\pi-i\log \kappa).
\end{split}
\end{equation}
We note that it is possible that $\kappa=1$ with the two pairs $(m_1,2n_1)$ and $m_2,2n_2$. As shown in \cite{Bergweilereremenko2019}, $p$ maps $D$ onto the complement $\mathbb{G}$ of a logarithmic spiral $\Gamma$, with
\begin{equation}\label{recuree1pm24  bef1}
\begin{split}
p(x+i0)=p(\kappa x-i0), \quad  x<0.
\end{split}
\end{equation}
In particular, when $\kappa=1$, $p$ is just the identity in the plane and $\Gamma=\mathbb{R}_{\leq 0}$. The inverse map $h=p^{-1}$ is a conformal homeomorphism $h:\mathbb{G}\to D$. Let $\Gamma'=p(\mathbb{R}_{\geq 0})$. The two logarithmic spirals $\Gamma$ and $\Gamma'$ divide the plane into two parts $\hat{\mathbb{G}}$ and $\check{\mathbb{G}}$, which are images under $p$ of the upper and lower half-planes, respectively.

The function $V$ defined by
\begin{equation*}
V(z)=\left\{
      \begin{array}{ll}
      (g_{m_2,n_2}\circ h)(z), & \quad z\in \hat{\mathbb{G}}, \\
      (g_{m_1,n_1}\circ h)(z), & \quad z\in \check{\mathbb{G}}
      \end{array}
    \right.
\end{equation*}
is meromorphic in $\hat{\mathbb{G}}\cup \check{\mathbb{G}}$ and has a jump discontinuity on $\Gamma$ and $\Gamma'$. To remove this discontinuity, we consider the strip $\Pi=\{z: |\text{Im}\ z|<1\}$ and define a quasiconformal homeomorphism $\psi: \mathbb{C}\to \mathbb{C}$ to be the identity outside of $\Pi$ and for $y=\text{Im}\ z \in (-1,1)$,
\begin{equation}\label{recuree1pm27  bef2}
\psi(x+iy)=\left\{
           \begin{array}{ll}
           \phi(x)+|y|(x-\phi(x))+iy,     & \quad x\geq 0, \\
           \phi(x/\kappa)+|y|(x-\phi(x/\kappa)+iy), & \quad x<0.
           \end{array}
           \right.
\end{equation}
Then $\psi$ commutes with the complex conjugation and satisfies
\begin{equation}\label{recuree1pm28}
\begin{split}
\psi(x)=\phi(x), \quad x>0 \quad \text{and} \quad \psi(\kappa x)=\phi(x), \quad x<0.
\end{split}
\end{equation}
The Jacobian matrix $J_{\psi}$ of $\psi$ is given for $x>0$ and $0<|y|<1$ by
\begin{equation*}
J_{\psi}(x+iy)=\left(
                 \begin{array}{cc}
                 \phi'(x)+|y|(1-\phi(x)) & \pm(x-\phi(x)) \\
                 0 & 1 \\
                 \end{array}
               \right),
\end{equation*}
and we see using \eqref{recuree1pm11} that
\begin{equation*}
J_{\psi}(x+iy)=\left(
                 \begin{array}{cc}
                 1 & 0 \\
                 0 & 1 \\
                 \end{array}
               \right), \quad 0<|y|<1, \quad x\to \infty.
\end{equation*}
Similarly, using \eqref{recuree1pm12} we find that
\begin{equation*}
J_{\psi}(x+iy)=\left(
                 \begin{array}{cc}
                 1 & \mp c \\
                 0 & 1 \\
                 \end{array}
               \right), \quad 0<|y|<1, \quad x\to-\infty.
\end{equation*}
We conclude that $\psi$ is quasiconformal in the plane.

Now we modify $V$ to obtain a continuous function. Define $U:\mathbb{C}\to \mathbb{C}$ by
\begin{equation}\label{recuree1pm32}
U(z)=\left\{
      \begin{array}{ll}
      (g_{m_2,n_2}\circ h)(z),            & \quad z\in \hat{\mathbb{G}}\cup\Gamma\cup\Gamma'\cup\{0\}, \\
      (g_{m_1,n_1}\circ \psi \circ h)(z), & \quad z\in \check{\mathbb{G}}.
      \end{array}
    \right.
\end{equation}
It follows from \eqref{recuree1pm24  bef1} and \eqref{recuree1pm27  bef2} that $U$ is continuous on $\Gamma$ and $\Gamma'$ and quasiregular in the plane. The existence theorem for solutions of the Beltrami equation \cite[{\S}~V.1]{lehtoVirtanen2008} shows that there exists a quasiconformal homeomorphism $\tau: \mathbb{C}\to \mathbb{C}$ with the same Beltrami coefficient as $U$. The function $F=U\circ\tau^{-1}$ is then meromorphic.
Moreover, as shown in \cite{Bergweilereremenko2017}, the Beltrami coefficient of $U$ (and hence of $\tau$) satisfies the hypotheses of the Teichm\"uller--Wittich--Belinskii theorem~\cite[\S~V.6]{lehtoVirtanen2008} (see also~\cite{Shishikura2018}). This theorem shows that $\tau$ is conformal at $\infty$ and may thus be normalized to satisfy
\begin{equation}\label{recuree1pm34}
\begin{split}
\tau(z)\sim z, \quad z\to\infty.
\end{split}
\end{equation}
We set $E=F/F'$. As $F'(z)\not=0$ for all $z\in \mathbb{C}$ by construction, $E$ is entire. As all poles and zeros of $F$ are simple, all residues of $F/F'$ are equal to $1$ or $-1$, so $E'(z)=1$ or $E'(z)=-1$ at every zero of $E$, which implies the Bank--Laine property. Clearly, $E=f_1f_2$ for two normalized solutions of the second order differential equation \eqref{bank-laine0} with an entire coefficient $A$.

We shall use a slightly different method from that in~\cite{Bergweilereremenko2017} to show that $E$, as well as the entire function $A$, has order $\rho=1/\text{Re}(\mu)$. Below we shall first estimate the numbers of poles and zeros of $F$ respectively.

For the meromorphic function $g_{m_2,n_2}$, if $m_2\not=0$, then the number of poles of $g_{m_2,n_2}$ satisfies $n(r,\infty,g_{m_2,n_2})\sim m_2r/\pi$ as $r\to\infty$. Thus, by the periodicity of $g_{m_2,n_2}$, the number of poles of $g_{m_2,n_2}$ in the upper half-pane $\hat{\mathbb{H}}$ satisfies $\hat{n}(r,\infty,g_{m_2,n_2})\sim m_2r/(2\pi)$ as $r\to\infty$. We note that the image of the circle $\{z:|z|=r\}$ under $h(z)=z^{1/\mu}$ is the part of the logarithmic spiral which connects two points on the negative real axis and intersects the positive real axis at $r^{1/\text{Re}(\mu)}$. By the definitions of $p$ and $h$ we may write $z=re^{i\theta}$ on the circle such that $z_0=re^{i\theta_0}$ is a point on $\Gamma'$ and
\begin{equation*}
\begin{split}
h(z)=\exp\left(\frac{1}{\mu}\log z\right)=\exp\left(\frac{(\text{Im}(\mu)) \theta+(\text{Re}(\mu))\log r}{|\mu|^2}+i\frac{(\text{Re}(\mu))\theta-(\text{Im}(\mu))\log r}{|\mu|^2}\right).
\end{split}
\end{equation*}
When $\kappa>1$, noting that $\text{Re}(\mu)>0$ and $\text{Im}(\mu)<0$, we may write $-(\theta_0+\pi)\leq \theta\leq -(\theta_0-\pi)$, where $\theta_0$ satisfies $(\text{Re}(\mu))\theta_0=(\text{Im}(\mu)) \log r$. A simple computation shows that
\begin{equation}\label{recuree1pm47}
\begin{split}
e^{(\text{Im}(\mu))\pi}r^{1/\text{Re}(\mu)}\leq |h(re^{i\theta})|\leq e^{-(\text{Im}(\mu))\pi}r^{1/\text{Re}(\mu)}.
\end{split}
\end{equation}
Therefore, when $r$ is large, the number of poles of $g_{m_2,n_2}\circ h$ in $\hat{\mathbb{G}}$ satisfies
\begin{equation*}
\begin{split}
c_2r^{1/\text{Re}(\mu)} \leq \hat{n}(r,\infty,g_{m_2,n_2}\circ h)\leq c_1r^{1/\text{Re}(\mu)}
\end{split}
\end{equation*}
for two positive constants $c_1$ and $c_2$ both dependent on $\kappa$. Similarly, if $m_1\not=0$, the number of poles of $g_{m_1,n_1}\circ h$ in $\check{\mathbb{G}}$ satisfies
\begin{equation}\label{recuree1pm48}
\begin{split}
c_4r^{1/\text{Re}(\mu)} \leq \check{n}(r,\infty,g_{m_1,n_1}\circ h) \leq c_3 r^{1/\text{Re}(\mu)}
\end{split}
\end{equation}
for two positive constants $c_3$ and $c_4$ both dependent on $\kappa$. By the definition of $\tau(x+iy)$ in \eqref{recuree1pm27  bef2} and the asymptotic behaviors for $\phi$ in \eqref{recuree1pm11} and \eqref{recuree1pm12}, we see that the number of poles of $g_{m_1,n_1}\circ \psi \circ h$ in $\check{\mathbb{G}}$ satisfies
\begin{equation}\label{recuree1pm49}
\begin{split}
c_6r^{1/\text{Re}(\mu)} \leq \check{n}(r,\infty,g_{m_1,n_1}\circ \psi \circ h) \leq c_5r^{1/\text{Re}(\mu)}
\end{split}
\end{equation}
for two positive constants $c_5$ and $c_6$ both dependent on $\kappa$. When $\kappa=1$ or $\kappa<1$, we can also obtain two similar estimates as in \eqref{recuree1pm48} and \eqref{recuree1pm49}. By summarizing the above estimates together with the relation in \eqref{recuree1pm34}, we conclude that $n(r,\infty,F)\asymp r^{1/\text{Re}(\mu)}$. It follows that $N(r,\infty,F)\asymp r^{1/\text{Re}(\mu)}$. Similarly, when at least one of $n_1$ and $n_2$ is nonzero, for the zeros of $F$ we also obtain $n(r,0,F)\asymp r^{1/\text{Re}(\mu)}$ and thus $N(r,0,F)\asymp r^{1/\text{Re}(\mu)}$. Since $E=F/F'$ and since the poles and zeros of $F$ are simple, we have
\begin{equation}\label{recuree1pm50}
\begin{split}
N\left(r,0,E\right)\asymp r^{1/\text{Re}(\mu)}.
\end{split}
\end{equation}

Next we estimate $m(r,1/E)$. We note that there is a set $\Omega_1$ of finite linear measure such that, for all $|z|\not\in \Omega_1$, the functions $P_{n_i}(e^{z})$ and $Q_{m_i}(e^z)$ both satisfy
\begin{equation*}
    \left\{
      \begin{array}{ll}
      \exp(-(2n_i+\varepsilon)|z|)\leq \left|P_{n_i}(e^{z})\right|\leq \exp((2n_i+\varepsilon)|z|),&\\
      \exp(-(m_i+\varepsilon)|z|)\leq \left|Q_{m_i}(e^{z})\right|\leq \exp((m_i+\varepsilon)|z|),&
      \end{array}
    \right.
\end{equation*}
where $i=1,2$ and $\varepsilon>0$ is a small constant. This implies that
\begin{equation*}
\begin{split}
|g_{m_i,n_i}(z)|\leq \exp((m_i+\varepsilon)|z|+(2n_i+\varepsilon)|z|)\left|\exp(e^z)\right|
\end{split}
\end{equation*}
for $i=1,2$ and all $z\in \mathbb{C}$ such that $|z|\not\in \Omega_1$. Denote $L_i=m_i+2n_i$, $i=1,2$, for simplicity. Clearly, this implies that there is a set $\Omega_2$ of finite linear measure such that
\begin{equation}\label{recuree1pm53}
    \left\{
      \begin{array}{ll}
      |g_{m_2,n_2}(z)|\leq \exp((L_2+2\varepsilon)|z|)\left|\exp\left(e^{z}\right)\right| \quad \text{for} \quad z\in \hat{\mathbb{H}} \quad \text{and} \quad |z|\not\in \Omega_2, & \\
      |g_{m_1,n_1}(z)|\leq \exp((L_1+2\varepsilon)|z|)\left|\exp\left(e^{z}\right)\right| \quad \text{for} \quad z\in \check{\mathbb{H}} \quad \text{and} \quad |z|\not\in \Omega_2. & \\
      \end{array}
    \right.
\end{equation}
We deduce from \eqref{recuree1pm32}, \eqref{recuree1pm47} and \eqref{recuree1pm53} that there is a set $\Omega_3$ of finite linear measure such that
\begin{equation*}
\begin{split}
\log|g_{m_2,n_2}\circ h| \leq c(L_2+2\varepsilon)|z|^{1/\text{Re}(\mu)}+\exp(c|z|^{1/\text{Re}(\mu)}) \quad \text{for} \quad  z\in \hat{\mathbb{G}} \quad \text{and} \quad  |z|\not\in \Omega_3,
\end{split}
\end{equation*}
where $c$ is a suitably chosen positive constant and, together with the asymptotic behaviors for $\phi$ in \eqref{recuree1pm11} and \eqref{recuree1pm12} and also the definition $\psi(x+iy)$ in \eqref{recuree1pm28}, that
\begin{equation*}
\begin{split}
\log|g_{m_1,n_1} \circ \psi \circ h| \leq c(L_1+2\varepsilon)|z|^{1/\text{Re}(\mu)}+\exp(c|z|^{1/\text{Re}(\mu)}) \quad \text{for}\quad  z\in \check{\mathbb{G}} \quad  \text{and} \quad |z|\not\in \Omega_3.
\end{split}
\end{equation*}
The above two estimates and~\eqref{recuree1pm34} yield
\begin{equation}\label{recuree1pm57}
\begin{split}
\log|F(z)|\leq \exp((c+o(1))|z|^{1/\text{Re}(\mu)})
\end{split}
\end{equation}
as $|z|\to\infty$ outside a set $\Omega_3$ of finite linear measure. Together with the relation in \eqref{recuree1pm57}, the lemma on the logarithmic derivative~\cite[Section~2.2]{Hayman1964Meromorphic} now implies that $E=F/F'$ satisfies
\begin{equation}\label{recuree1pm58}
\begin{split}
m\left(r,\frac{1}{E}\right)=O(r^{1/\text{Re}(\mu)}),
\end{split}
\end{equation}
outside a set of finite linear measure. Thus, by \eqref{recuree1pm50} and the first main theorem of Nevanlinna theory \cite[Section~1.3]{Hayman1964Meromorphic}, the characteristic function $T(r,E)$ of $E$ satisfies
\begin{equation*}
\begin{split}
T\left(r,E\right)\asymp r^{1/\text{Re}(\mu)}.
\end{split}
\end{equation*}
Therefore, $E=f_1f_2$ is a Bank--Laine function such that $\lambda(E)=\rho(E)=1/\text{Re}(\mu)=\rho$, as desired. Moreover, the two normalized solutions $f_1$ and $f_2$ of the second order linear differential equation \eqref{bank-laine0} both satisfy $n(r,0,f_i)\asymp r^{1/\text{Re}(\mu)}$, $i=1,2$.

Finally, we determine the order of the entire function $A$. Since $E$ is of finite order, it follows by the relation \eqref{recuree1pm7} together with \eqref{recuree1pm58} and the lemma on the logarithmic derivative that
\begin{equation*}
\begin{split}
m(r,A)=m\left(r,\frac{1}{E}\right)+O(\log r)=O(r^{1/\text{Re}(\mu)}).
\end{split}
\end{equation*}
Thus the first main theorem implies that $\rho(A)\leq 1/\text{Re}(\mu)$. On the other hand, we note that the function $U$ defined in \eqref{recuree1pm32} is meromorphic in $\hat{\mathbb{G}}$ and, in particular, is analytic on $\Gamma$ and $\Gamma'$ respectively. By taking the derivative of $F$ in $\tau(\hat{\mathbb{G}})$, we get
\begin{equation}\label{recuree1pm59-h}
\begin{split}
\frac{F'}{F}=\left(\frac{g'_{m_2,n_2}}{g_{m_2,n_2}}\circ h \circ \tau^{-1}\right)(h'\circ \tau^{-1})(\tau^{-1})'.
\end{split}
\end{equation}
We may choose a curve in $\tau(\hat{\mathbb{G}})$, say $\Gamma''$, so that $\tau^{-1}(\Gamma'')$ is close to the curve $\Gamma'$. For a point $z\in\tau^{-1}(\Gamma'')$, we have the Cauchy formula
\begin{equation*}
\begin{split}
\tau'(z)=\frac{1}{2\pi i}\int_{C_z}\frac{\tau(\zeta)}{(\zeta-z)^2}d\zeta
\end{split}
\end{equation*}
with a circle $C_z$ centered at $z$. Choosing the radius $\beta(z)$ of this circle to be finite, say $\beta(z)=1$, we have $|\tau'(z)|\leq 2|z|$ when $|z|$ is large. Since $E=F/F'$, then by the definition of $g_{m,n}$ in \eqref{recuree1pm9} as well as the relations in \eqref{recuree1pm34} and in \eqref{recuree1pm59-h}, we see that $1/E$ satisfies
\begin{equation}\label{recuree1pm59-h-1}
\begin{split}
\frac{1}{|E(z)|}\geq \left(d_1e^{\frac{1}{2}|z|^{1/\text{Re}(\mu)}}\right)\left(\frac{d_2|z|^{1/\text{Re}(\mu)-1}}{|\mu|}\right)\frac{1}{2|z|}
\end{split}
\end{equation}
for two positive constants $d_1,d_2$ and all $z\in\Gamma''$ such that $|z|$ is large.
Further, since $E$ is of finite order, by an estimate in \cite[Corollary~2]{gundersen:88} we may suppose that, for a small $\varepsilon>0$, there is an infinite sequence $(z_n)$ such that $z_n\in\Gamma''$ and $|E'(z_n)/E(z_n)|\leq |z_n|^{1/\text{Re}(\mu)-1+\varepsilon}$ and $|E''(z_n)/E(z_n)|\leq |z_n|^{2(1/\text{Re}(\mu)-1+\varepsilon)}$ for all $z_n$ such that $|z_n|$ is large. Together with these two estimates as well as the inequality in \eqref{recuree1pm59-h-1}, the relation in \eqref{recuree1pm7} implies that $\log |A(z_n)|\geq d_3|z_n|^{1/\text{Re}(\mu)}$ for some positive constant $d_3$ and all $z_n\in\Gamma''$ such that $|z_n|$ is large, implying that $\rho(A)\geq 1/\text{Re}(\mu)$. Hence $\rho(A)=1/\text{Re}(\mu)$. This completes the proof of Theorem~\ref{maintheorem3}.

\section{Proof of Theorem~\ref{maintheorem4}}\label{Proof of maintheorem4} 

\subsection{Preliminary lemmas}\label{Preliminary lemmas} 

We first prove a technical lemma which will be used to determine the number of zeros of the desired Bank--Laine functions. In this subsection, we restrict to consider the case $\lambda_1\leq \lambda_2<1$. We shall choose two sequences $(m_k)$ and $(n_k)$ associated with $\lambda_1$ and $\lambda_2$, respectively.

\begin{lemma}\label{Lemma a}
Let $\lambda\in[0,1)$. Then there is a sequence $(m_k)$ of integers~0 and~1 only, with $m_k=0$ for $k=1,2,3$, such that the function $h:[0,\infty)\to[0,\infty)$ which satisfies $h(0)=0$ and which is linear in the intervals $[2\pi (k-1), 2\pi k]$ and has slopes $m_k$ there satisfies
\begin{equation}\label{recuree1}
\begin{split}
h(x)=(\log x)^2+O(1),  \quad x\to\infty
\end{split}
\end{equation}
when $\lambda=0$, or
\begin{equation}\label{recuree2}
\begin{split}
h(x)=x^{\lambda}+O(1), \quad x\to\infty
\end{split}
\end{equation}
when $0<\lambda<1$. Moreover, when $\lambda=0$, the subsequence $(m_{k_i})$ such that $m_{k_i}=1$ for all $k_i$ satisfies
\begin{equation}\label{recuree3}
\begin{split}
2\pi k_{i}=\left[1+O\left(1/k_i\right)\right]e^{(2\pi i)^{1/2}}, \quad i\to\infty
\end{split}
\end{equation}
and, when $0<\lambda<1$, the subsequence $(m_{k_i})$ such that $m_{k_i}=1$ for all $k_i$ satisfies
\begin{equation}\label{recuree4}
\begin{split}
2\pi k_{i}=(2\pi i)^{1/\lambda}+O(1), \quad i\to\infty.
\end{split}
\end{equation}

\end{lemma}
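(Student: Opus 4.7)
The plan is to place the slope-$1$ intervals of $h$ greedily at positions dictated by the desired target function. Set $T(x)=(\log x)^2$ if $\lambda=0$ and $T(x)=x^\lambda$ if $0<\lambda<1$. With $k_0:=3$, define $k_i$ recursively for $i\geq 1$ as the smallest integer $k>k_{i-1}$ satisfying $T(2\pi k)\geq 2\pi i$; declare $m_{k_i}=1$ and $m_k=0$ for every $k$ not of this form. The constraint $k>k_{i-1}$ guarantees that the $k_i$ are strictly increasing and that $m_1=m_2=m_3=0$; because $T$ grows slowly, this constraint is inactive for all large $i$, and the definition reduces to the direct choice $k_i=\min\{k\in\mathbb{N}:T(2\pi k)\geq 2\pi i\}$.

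With this construction, writing $N(k)=\#\{i:k_i\leq k\}$, we have $h(2\pi k)=2\pi N(k)$. Since $k_i$ is (eventually) the $i$-th jump point of $\lfloor T(2\pi\,\cdot)/(2\pi)\rfloor$, one obtains $N(k)=T(2\pi k)/(2\pi)+O(1)$, and hence $h(2\pi k)=T(2\pi k)+O(1)$. To pass from $h(2\pi k)$ to general $x\in[2\pi(k-1),2\pi k]$, note that $|h(x)-h(2\pi k)|\leq 2\pi$ while $|T(x)-T(2\pi k)|\leq 2\pi\sup_{[2\pi(k-1),2\pi k]}T'\to 0$ as $k\to\infty$ in both cases. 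Combined, these give \eqref{recuree1} and \eqref{recuree2}.

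For the positions of the jumps, once $i$ is large the minimality in the definition yields $0\leq T(2\pi k_i)-2\pi i\leq T(2\pi k_i)-T(2\pi(k_i-1))$, and the mean value theorem bounds the right-hand side by $O((\log k_i)/k_i)$ when $\lambda=0$ and by $O(k_i^{\lambda-1})$ when $0<\lambda<1$. Inverting $T$: when $\lambda=0$ one obtains $\log(2\pi k_i)=\sqrt{2\pi i}\sqrt{1+O((\log k_i)/(i\,k_i))}=\sqrt{2\pi i}+O(1/k_i)$, whose exponential is \eqref{recuree3}; when $0<\lambda<1$ one obtains $2\pi k_i=\bigl(2\pi i+O(k_i^{\lambda-1})\bigr)^{1/\lambda}$, and since $k_i^{\lambda-1}\asymp(2\pi i)^{1-1/\lambda}$, a first-order Taylor expansion yields \eqref{recuree4}.

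The \emph{main delicate point} is the sharpness of these asymptotics for $k_i$, which is finer than what a crude inversion of $T$ would predict. For $\lambda=0$ the factor $\sqrt{i}$ produced by differentiating $\log^2$ is offset by the factor $1/(\sqrt{i}\,k_i)$ inside the inner error; for $0<\lambda<1$ the exact balance $k_i^{\lambda-1}\cdot(2\pi i)^{1/\lambda-1}=O(1)$ is precisely what keeps the error bounded (and is what would degrade as $\lambda\to 1^-$, consistent with the exclusion of $\lambda=1$). Verifying these cancellations explicitly, and checking that the finitely many forced initial values $m_1=m_2=m_3=0$ perturb the main asymptotics by at most an additive $O(1)$ constant, forms the technical core of the argument.
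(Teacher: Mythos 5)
Your proof is correct and takes essentially the same greedy construction as the paper: placing the slope-$1$ intervals at the smallest $k$ with $T(2\pi k)\geq 2\pi i$. One simplification worth noting: since $T$ is eventually increasing, applying $T^{-1}$ directly to the sandwich inequality $T(2\pi(k_i-1))<2\pi i\leq T(2\pi k_i)$ gives $T^{-1}(2\pi i)\leq 2\pi k_i<T^{-1}(2\pi i)+2\pi$ at once, from which \eqref{recuree3} and \eqref{recuree4} follow immediately (using $e^{-\sqrt{2\pi i}}\asymp 1/k_i$ in the $\lambda=0$ case), so the cancellation you flag as the ``main delicate point'' happens automatically and needs neither the mean value theorem nor a Taylor expansion.
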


\begin{proof}
The statement in \cite[Lemma~2.2]{Zhang2024} contains an error, so here we present a complete proof for the lemma. Moreover, we will construct the sequence $(m_k)$ including infinitely many elements $1$ for the case $\lambda=0$.

When $0\leq \lambda < 1$, we set $h(0)=0$ and choose $m_1=m_2=m_3=0$. We note that
\begin{equation*}
\begin{split}
(\log(2\pi k))^2-(\log(2\pi(k-1)))^2 \leq 2\pi \quad \text{for} \quad k\geq 1
\end{split}
\end{equation*}
and
\begin{equation*}
\begin{split}
(2\pi k)^{\lambda}-(2\pi(k-1))^{\lambda} \leq 2\pi \quad \text{for} \quad k\geq 1
\end{split}
\end{equation*}
when $0<\lambda<1$. Suppose that $k\geq 3$, and $h(2\pi k_{i-1})$ is already defined. Then we define
\begin{equation}\label{recuree5-bef}
\begin{split}
h(2\pi k_{i}):=2\pi i,
\end{split}
\end{equation}
where $k_{i}$ is the largest positive integer such that
\begin{equation}\label{recuree5}
\begin{split}
[\log(2\pi(k_{i}-1))]^2<2\pi i\leq [\log(2\pi k_{i})]^2
\end{split}
\end{equation}
when $\lambda=0$, and the largest positive integer such that
\begin{equation}\label{recuree6}
\begin{split}
(2\pi(k_{i}-1))^{\lambda}<2\pi i\leq (2\pi k_{i})^{\lambda}
\end{split}
\end{equation}
when $0<\lambda<1$. Then we interpolate $h$ linearly between $2\pi (k_{i}-1)$ and $2\pi k_i$ in the way that the slopes are equal to $m_{k_i}=1$ in the intervals $[2\pi (k_i-1),2\pi k_i]$ and the slopes are equal to $m_{k}=0$ otherwise. In particular, when $m_{k_i}=1$, since $2\pi(k_i-1)<x<2\pi k_i$ we have $x=2\pi k_i+O(1)$. By the relation $\log(1+x)\to x$ as $x\to 0$, we see that the two estimates in \eqref{recuree3} and \eqref{recuree4} follow from \eqref{recuree5} and \eqref{recuree6}, respectively. Moreover, by the choice of $(m_k)$, we easily see that $h(x)=(\log x)^2+O(1)$ when $\lambda=0$ and also that $h(x)=x^{\lambda}+O(1)$ when $0<\lambda<1$. Thus the assertions on $h$ in \eqref{recuree1} and \eqref{recuree2} follow.

\end{proof}

If we choose the sequence $(m_k)$ in Lemma~\ref{Lemma a} to be such that only finitely many of the elements $m_k$ are equal to~1, then $f_1$ or $f_2$ in the constructed Bank--Laine function $E=f_1f_2$ has only finitely many zeros.

Let $\lambda_1,\lambda_2\in[0,1)$ be two numbers such that $\lambda_1\leq \lambda_2$. By Lemma~\ref{Lemma a}, for the number $\lambda_1\in[0,1)$, there is a sequence $(m_k)$ of $0$ and $1$ only and a function $h_1:[0,\infty)\to[0,\infty)$ which satisfies $h_1(0)=0$ and which is linear in the intervals $[2\pi (k-1), 2\pi k]$ and has slopes $m_k$ there satisfies
\begin{equation*}
\begin{split}
h_1(x)=(\log x)^2+O(1),  \quad x\to\infty
\end{split}
\end{equation*}
when $\lambda_1=0$, and satisfies
\begin{equation*}
\begin{split}
h_1(x)=x^{\lambda_1}+O(1), \quad x\to\infty
\end{split}
\end{equation*}
when $0<\lambda_1<1$. Moreover, when $\lambda_1=0$, the subsequence $(m_{k_i})$ such that $m_{k_i}=1$ for all $k_i$ satisfies
\begin{equation*}
\begin{split}
2\pi k_{i}=\left[1+O\left(1/k_i\right)\right]e^{(2\pi i)^{1/2}}, \quad i\to\infty
\end{split}
\end{equation*}
and, when $0<\lambda_1<1$, the subsequence $(m_{k_i})$ such that $m_{k_i}=1$ for all $k_i$ satisfies
\begin{equation*}
\begin{split}
2\pi k_{i}=(2\pi i)^{1/\lambda_1}+O(1), \quad i\to\infty.
\end{split}
\end{equation*}
Similarly, for the number $\lambda_2\in[0,1)$, there is a sequence $(n_k)$ of $0$ and $1$ only and a function $h_2:[0,\infty)\to[0,\infty)$ which satisfies $h_2(0)=0$ and which is linear in the intervals $[2\pi (k-1), 2\pi k]$ and has slopes $2n_k$ there satisfies
\begin{equation*}
\begin{split}
h_2(x)=2(\log x)^2+O(1),  \quad x\to\infty
\end{split}
\end{equation*}
when $\lambda_2=0$, and satisfies
\begin{equation*}
\begin{split}
h_2(x)=2x^{\lambda_2}+O(1), \quad x\to\infty
\end{split}
\end{equation*}
when $0<\lambda_2<1$. Moreover, when $\lambda_2=0$, the subsequence $(n_{k_j})$ such that $n_{k_j}=1$ for all $k_j$ satisfies
\begin{equation*}
\begin{split}
2\pi k_{j}=\left[1+O\left(1/k_j\right)\right]e^{(2\pi j)^{1/2}}, \quad j\to\infty
\end{split}
\end{equation*}
and, when $0<\lambda_2<1$, the subsequence $(n_{k_j})$ such that $n_{k_j}=1$ for all $k_j$ satisfies
\begin{equation*}
\begin{split}
2\pi k_{j}=(2\pi j)^{1/\lambda_2}+O(1), \quad j\to\infty.
\end{split}
\end{equation*}
For each pair $(m_k,2n_k)$, we denote $N_k=m_k+2n_k+1$. Then, for the two sequences $(m_k)$ and $(n_k)$, a simple calculation shows that the function $H:[0,\infty)\to[0,\infty)$ which satisfies $H(0)=0$ and which is linear in the intervals $[2\pi (k-1), 2\pi k]$ and has slopes $N_k$ there satisfies
\begin{equation}\label{recuree1-Big}
\begin{split}
H(x)=x+\omega(x)
\end{split}
\end{equation}
where $\omega:[0,\infty)\to[0,\infty)$ is a function satisfying $\omega(x) \sim 3(\log x)^2$ as $x\to\infty$ when $\lambda_1=\lambda_2=0$ or $\omega(x) \sim 2x^{\lambda_2}$ as $x\to\infty$ when $\lambda_1<\lambda_2$ or $\omega(x) \sim 3x^{\lambda_2}$ as $x\to\infty$ when $\lambda_1=\lambda_2>0$. We see that there is a function $g:[0,\infty)\to[0,\infty)$ which satisfies $g(0)=0$ and which is linear in the intervals $[2\pi (k-1), 2\pi k]$ such that
\begin{equation*}
\begin{split}
H(g(x))=x.
\end{split}
\end{equation*}
Such a function $g$ exists and, by the estimates in \eqref{recuree1-Big}, we see that $g(x)=x(1+o(1))$ as $x\to\infty$. A more canful calculation shows that the distance between the curve determined by the function $y=g(x)$ and the curve determined by the function $y=x$ tends to infinity as $x\to\infty$. This makes it impossible to define a quasiregular function in the way as in~\cite[Subsection~2.2]{Zhang2024}.

Denote $\mathcal{N}_k=\sum_{j=1}^{k}N_j$ and $\mathcal{N}_0=0$. We shall modify the previous functions $h_1$, $h_2$ and $H$ in the intervals $[2\pi \mathcal{N}_{k-1},2\pi \mathcal{N}_k]$ to be linear there and has slops $m_{k}/N_{k}$, $2n_{k}/N_{k}$ and $1$, respectively. Then, for the constant $\lambda_1\in[0,1)$ and the associated sequence $(m_k)$, there is a function $H_1:[0,\infty)\to[0,\infty)$ which satisfies $H_1(0)=0$ and which is linear in the intervals $[2\pi \mathcal{N}_{k-1},2\pi\mathcal{N}_k]$ and has slopes $m_k/N_k$ there satisfies
\begin{equation*}
\begin{split}
H_1(x)=h_1(x)+O(1),  \quad x\to\infty.
\end{split}
\end{equation*}
Moreover, when $\lambda_1=0$, the subsequence $(m_{k_i})$ such that $m_{k_i}=1$ for all $k_i$ satisfies
\begin{equation}\label{recuree3-a3-a3}
\begin{split}
c_{2}e^{(2\pi i)^{1/2}}\leq 2\pi k_{i}\leq c_{1}e^{(2\pi i)^{1/2}}
\end{split}
\end{equation}
for two positive constants $c_{1},c_{2}$ and, when $0<\lambda_1<1$, the subsequence $(m_{k_i})$ such that $m_{k_i}=1$ for all $k_i$ satisfies
\begin{equation}\label{recuree4-a4-a4}
\begin{split}
c_{4}(2\pi i)^{1/\lambda_1}\leq 2\pi k_{i}\leq c_{3}(2\pi i)^{1/\lambda_1}
\end{split}
\end{equation}
for two positive constants $c_{3},c_{4}$. Also, for the constant $\lambda_2\in[0,1)$ and the associated sequence $(n_k)$, there is a function $H_2:[0,\infty)\to[0,\infty)$ which satisfies $H_2(0)=0$ and which is linear in the intervals $[2\pi \mathcal{N}_{k-1},2\pi\mathcal{N}_k]$ and has slopes $2n_k/N_k$ there satisfies
\begin{equation*}
\begin{split}
H_2(x)=h_2(x)+O(1),  \quad x\to\infty.
\end{split}
\end{equation*}
Moreover, when $\lambda_2=0$, the subsequence $(n_{k_j})$ such that $n_{k_j}=1$ for all $k_j$ satisfies
\begin{equation}\label{recuree3-b3-b3}
\begin{split}
c_{6}e^{(2\pi j)^{1/2}}\leq 2\pi k_{j}\leq c_{5}e^{(2\pi j)^{1/2}}
\end{split}
\end{equation}
for two positive constants $c_{5},c_{6}$ and, when $0<\lambda_2<1$, the subsequence $(n_{k_j})$ such that $n_{k_j}=1$ for all $k_j$ satisfies
\begin{equation}\label{recuree4-b4-b4}
\begin{split}
c_{8}(2\pi j)^{1/\lambda_2}\leq 2\pi k_{j}\leq c_{7}(2\pi j)^{1/\lambda_2}
\end{split}
\end{equation}
for two positive constants $c_{7},c_{8}$.

Let now $m,n,\mathfrak{m},\mathfrak{n}\in \mathbb{N}$ and $N=m+2n+1$ and $M=\mathfrak{m}+2\mathfrak{n}+1$. Let
\begin{equation*}
\begin{split}
\kappa=\frac{M}{N}=\frac{\mathfrak{m}+2\mathfrak{n}+1}{m+2n+1}.
\end{split}
\end{equation*}
Then we need to study the asymptotic behaviors of the increasing diffeomorphism  $\phi:\mathbb{R}\to\mathbb{R}$ defined by
\begin{equation}\label{diffeo  fajr  a}
\begin{split}
g_{\mathfrak{m},\mathfrak{n}}(x)=(g_{m,n}\circ\phi)(x),
\end{split}
\end{equation}
which has appeared in \eqref{recuree-1orqr-1}. We will consider the functions $\phi$ for the case that $m=m_k$, $n=n_k$, $\mathfrak{m}=m_{k+1}$ and $\mathfrak{n}=n_{k+1}$ and also that $N=N_k$ and $M=N_{k+1}$. Note that $N_k\leq 4$ for all $k$ and thus $\kappa=\kappa(k)$ is bounded for all $k$. Thus the asymptotic behaviors described in Lemma~\ref{Lemma 0} apply to all $\phi_k$ such that $g_{m_{k+1},n_{k+1}}(x)=g_{m_{k},n_{k}}(\phi_k(x))$. In order to simplify the formulas, we will sometimes suppress the dependence of $\phi$ from $m,n$ and $\mathfrak{m},\mathfrak{n}$ from the notation.

For technical reasons, we will not work with $g_{m,n}$ defined in \eqref{recuree1pm9} directly. Recall that by \eqref{recuree1pm10} the function $g_{m,n}:\mathbb{R}\to(1,\infty)$ is an increasing homeomorphism. Thus there exists $s_{m,n}\in \mathbb{R}$ such that
\begin{equation}\label{diffeo  fajr  a-1}
\begin{split}
g_{m,n}(s_{m,n})=2.
\end{split}
\end{equation}
For each pair of $(m,2n)$, as in \cite{Bergweilereremenko2019} we shall work with the function $g_{m,n}(z+s_{m,n})$. Since $N\leq 4$ in any case, we have the following

\begin{lemma}\label{Lemma b}
The constant $s_{m,n}$ in \eqref{diffeo  fajr  a-1} satisfies $s_{m,n}=O(1)$ as $k\to\infty$.
\end{lemma}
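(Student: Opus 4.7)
The lemma is an essentially tautological consequence of the finite-parameter setup of this subsection. First I observe that, since we are in the regime $\lambda_1, \lambda_2 \in [0, 1)$, Lemma~\ref{Lemma a} supplies sequences $(m_k)$ and $(n_k)$ whose entries are only $0$ or $1$. Consequently, as $k$ varies the pair $(m_k, 2n_k)$ ranges over the finite set $\{(0,0),(0,2),(1,0),(1,2)\}$, which is precisely the content of the remark ``$N \leq 4$ in any case'' preceding the lemma.

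The second step uses the structural properties of $g_{m,n}$ already established in the introduction: by~\eqref{recuree1pm10} together with the positivity of the coefficients $A_i$, the derivative $g'_{m,n}$ is everywhere positive on $\mathbb{R}$, so $g_{m,n}\colon \mathbb{R}\to(1,\infty)$ is a strictly increasing real-analytic homeomorphism, with $g_{m,n}(x)\to 1$ as $x\to-\infty$ and $g_{m,n}(x)\to\infty$ as $x\to+\infty$. Hence for each fixed pair $(m,2n)$ there is a unique real number $s_{m,n}$ satisfying $g_{m,n}(s_{m,n}) = 2$. Since the four admissible pairs produce only four specific functions $g_{m,n}$, the quantity $s_{m_k,n_k}$ takes at most four distinct real values as $k$ varies. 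Setting
\[
C := \max_{(m,2n)\in\{(0,0),(0,2),(1,0),(1,2)\}} |s_{m,n}| < \infty,
\]
we obtain $|s_{m_k,n_k}| \leq C$ for every $k$, and hence $s_{m_k,n_k} = O(1)$ as $k \to \infty$.

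There is no genuine analytic obstacle here; the lemma is recorded not because its proof is delicate but because the resulting bound will be invoked uniformly in $k$ later, when the asymptotic expansions of Lemma~\ref{Lemma 0} are applied to the shifted functions $g_{m_k,n_k}(z + s_{m_k,n_k})$ and to the diffeomorphisms $\phi_k$ determined by $g_{m_{k+1},n_{k+1}}(x + s_{m_{k+1},n_{k+1}}) = g_{m_k,n_k}(\phi_k(x) + s_{m_k,n_k})$. The uniform boundedness of the shifts $s_{m_k,n_k}$ in $k$ is exactly what allows the error constants in those expansions, and hence in the ensuing quasiconformal-surgery estimates, to be chosen independently of $k$.
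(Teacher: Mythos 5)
Your proof is correct, and it lands the same conclusion by a shorter and more transparent route. The paper instead writes $s_{m,n} = \log N + s$ and, from the defining relation $g_{m,n}(s_{m,n}) = 2$, derives the equation
\begin{equation*}
\log\Bigl(\sum_{j=0}^{2n} B_j N^j e^{js}\Bigr) + \log\Bigl(\sum_{i=0}^{m} A_i N^i e^{is}\Bigr) + N e^{s} = \log 2,
\end{equation*}
observing that the left-hand side is a continuous strictly increasing function of $s$ running from $0$ to $\infty$, hence $s = O(1)$ and so $s_{m,n} = \log N + O(1)$, which with $N \leq 4$ gives $s_{m,n} = O(1)$. That computation is really an adaptation of the large-$N$ argument from \cite[Lemma~2.4]{Zhang2024}, where it pins $s_{m,n}$ to within $O(1)$ of $\log N$ as $N \to \infty$; the precision $s_{m,n} - \log N = O(1)$ is what matters there. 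In the present setting that extra precision is unnecessary, and your observation — that since $m_k, n_k \in \{0,1\}$ the pair $(m_k, 2n_k)$ ranges over the four-element set $\{(0,0),(0,2),(1,0),(1,2)\}$, so $s_{m_k,n_k}$ takes at most four distinct values — is the cleanest way to obtain uniform boundedness in $k$. Indeed, the paper's ``$s = O(1)$'' step itself implicitly relies on this finiteness (or a uniform quantitative version of the limit statement) to pass from ``for each fixed $(m,n)$ there is a finite $s$'' to a bound independent of $k$; you have simply made that implicit step explicit. Both proofs are valid; yours buys clarity here, the paper's buys generality in $N$ that it does not actually need at this point.
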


\begin{proof}
The proof for large $N$ has been given in \cite[Lemma~2.4]{Zhang2024}. We write $s_{m,n}=\log N+s$. Then equation \eqref{recuree1pm9} yields
\begin{equation}\label{trans 2-a-1}
\begin{split}
\log\left(\sum_{j=0}^{2n}B_jN^je^{js}\right)+\log\left(\sum_{i=0}^{m}A_iN^ie^{is}\right)+Ne^{s}=\log2.
\end{split}
\end{equation}
The left hand side of \eqref{trans 2-a-1} tends to $0$ if we let $s\to-\infty$ and tends to $\infty$ if we let $s\to\infty$. Thus we must have $s=O(1)$.

\end{proof}

\subsection{Definition of a quasiregular map}\label{Definition of a quasiregular map} 

Now we begin to define a quasiregular map by gluing the functions $g_{m,n}$ defined in \eqref{recuree1pm9} for two suitably chosen sequences $(m_k)$ and $(n_k)$. By modifying the functions $g_{m,n}$ suitably, we obtain closely related functions $u_{m,n}$ and $v_{m,n}$ and then glue restrictions of these maps to strips along horizontal lines to obtain quasiregular maps $U$ and $V$ which are defined in the right and left half-planes respectively. In particular, we have $U(iy)=V(iy)$ on the imaginary axis, resulting in a quasiregular map $G$ in the plane. The maps $U$, $V$ and $G$ will commute with the complex conjugation, so it will be enough to define them in the upper half-plane.

Instead of the mapping $z^{\mu}$ used in the proof of Theorem~\ref{maintheorem2} and Theorem~\ref{maintheorem3}, we shall consider the affine mappings in regions of the plane so that the order of the resulting Bank--Laine function remains unchanged. This is based on the fact that the affine mapping from a rectangle region to another rectangle region is "most nearly conformal"~\cite{Ahlfors2006}. Instead of gluing the functions $g_{m,n}$ in \eqref{recuree1pm9} directly, for each pair $(m,2n)$ of nonnegative integers $m$ and $n$, we shall first modify the functions $g_{m,n}$ slightly and define
\begin{equation}\label{diffeo  fajr  a a;lrk-1}
\begin{split}
g^{+}_{m,n}(z)=g_{m,n}\left(\frac{x}{l}+i\frac{y}{N}\right),
\end{split}
\end{equation}
where $l$ is a positive integer dependent on the sequence $(m_k)$ and $(n_k)$ we shall choose later, and
\begin{equation}\label{diffeo  fajr  a a;lrk-2}
\begin{split}
g^{-}_{m,n}(z)=g_{m,n}\left(\frac{x+i y}{N}\right).
\end{split}
\end{equation}
Actually we shall restrict the functions in \eqref{diffeo  fajr  a a;lrk-1} to the right half-plane $\mathbb{H}^+=\{z\in\mathbb{C}: \text{Re}\ z>0\}$ and the functions in \eqref{diffeo  fajr  a a;lrk-2} to the left half-plane $\mathbb{H}^-=\{z\in\mathbb{C}: \text{Re}\ z<0\}$. Thus we may write
\begin{equation}\label{diffeo  fajr  ayt}
\begin{split}
\tilde{g}_{m,n}(z)=(g_{m,n}\circ\chi_{m,n})(z)
\end{split}
\end{equation}
for a map $\chi_{m,n}:\mathbb{C}\to \mathbb{C}$ such that $\chi_{m,n}(x+iy)=x/l+iy/N$ in the right half-plane $\mathbb{H}^+$ and $\chi_{m,n}(x+iy)=(x+iy)/N$ in the left half-plane $\mathbb{H}^-$ respectively. Note that $\chi_{m,n}$ is an affine mapping when restricted to the right or left half-plane and also that $\chi_{m,n}$ is continuous on the imaginary axis.

Now, for the two numbers $\lambda_1,\lambda_2\in[0,1]$ such that $\lambda_1\leq\lambda_2$, we choose the associated sequences $(m_k)$ and $(n_k)$ in the following way:
\begin{enumerate}
  \item [(I)] if $\lambda_1\leq \lambda_2<1$, then we choose the sequence $(m_k)$ and $(n_k)$ constructed in Subsection~\ref{Preliminary lemmas} respectively;
  \item [(II)] if $\lambda_1<\lambda_2=1$, then we choose the sequence $(m_k)$ constructed in Subsection~\ref{Preliminary lemmas} and $(n_k)$ in the way that $n_k=0$ for $k=1,2,3$ and $n_k=1$ for all $k\geq 4$ respectively;
  \item [(III)] if $\lambda_1=\lambda_2=1$, then we choose the sequence $(m_k)$ and $(n_k)$ in the way that $m_k=0$ for $k=1,2,3$ and $m_k=1$ for all $k\geq 4$ and $n_k=0$ for $k=1,2,3$ and $n_k=1$ for all $k\geq 4$ respectively.
\end{enumerate}
Corresponding to the two sequences $(m_k)$ and $(n_k)$ in each case, we shall choose the integer $l$ in $\tilde{g}_{m,n}$ in \eqref{diffeo  fajr  ayt} such that $l=1$ in the case (I) and $l=3$ in the case (II) and $l=4$ in the case (III). Then the explicit expression for $\tilde{g}_{m,n}$ in \eqref{diffeo  fajr  ayt} can be given using $g^{+}_{m,n}$ defined in \eqref{diffeo  fajr  a a;lrk-1} and $g^{-}_{m,n}$ defined in \eqref{diffeo  fajr  a a;lrk-2} as follows. On each strip
\begin{equation*}
\begin{split}
\hat{\Xi}_k=\{x+iy: \quad y\geq0, \quad 2\pi\mathcal{N}_{k-1}<y< 2\pi\mathcal{N}_k\}
\end{split}
\end{equation*}
of the upper half-plane, we shall define
\begin{equation}\label{diffeo  fajr  a-j}
\begin{split}
\tilde{g}_{m_k,n_k}(z)=\left\{
\begin{array}{ll}
g_{m_k,n_k}\left(\frac{x}{l}+i\frac{y-2\pi\mathcal{N}_{k-1}}{N_k}\right),  &  x\geq 0, \\
g_{m_k,n_k}\left(\frac{x+iy-2i\pi\mathcal{N}_{k-1}}{N_k}\right), &  x\leq 0.
\end{array}
\right.
\end{split}
\end{equation}
Note that the two expressions in the definition of $\tilde{g}_{m_k,n_k}$ in \eqref{diffeo  fajr  a-j} coincide on the imaginary axis in each of three cases (I), (II) and (III).

Now we begin to glue the functions $\tilde{g}_{m,n}$ defined in \eqref{diffeo  fajr  ayt} on the horizontal lines. We shall always assume that $(m_k)$ and $(n_k)$ are the two sequences chosen in either of the three cases. In either case, the choice of the integer $l$ guarantees that the resulting functions $U$, $V$ and $G$ are analytic in almost all parts of the plane.

We begin by constructing the map $U$. We shall use the explicit expressions for $\tilde{g}_{m_k,n_k}$ in \eqref{diffeo  fajr  a-j}. Recall from equation \eqref{diffeo  fajr  ayt} that $\tilde{g}_{m,n}=g_{m,n}\circ\chi_{m,n}$ for an affine mapping $\chi_{m,n}$ in the right half-plane. Instead of $\tilde{g}_{m,n}$, we consider the map
\begin{equation*}
\begin{split}
u_{m,n}: \{z\in\mathbb{C}: \text{Re}\ z\geq0 \}\to\mathbb{C}, \quad u_{m,n}(z)=g_{m,n}(\chi_{m,n}(z)+s_{m,n}).
\end{split}
\end{equation*}
Note that $u_{m,n}$ is increasing on the real line and maps $[0,\infty)$ onto $[2,\infty)$. For the two chosen sequences $(m_k)$ and $(n_k)$, we write $N_k=m_k+2n_k+1$. Put $U(z)=u_{m_k,n_k}(z)$ in the half-strip
\begin{equation}\label{halfstrip-1}
\begin{split}
\hat{\Xi}_k^{+}=\{x+iy: x>0, \quad  2\pi\mathcal{N}_{k-1}<y<2\pi\mathcal{N}_k\}.
\end{split}
\end{equation}
The function $U$ may be discontinuous on the horizontal lines $\mathcal{L}^{+}_{k-1}=\{z=x+iy: x>0, y=2\pi\mathcal{N}_{k-1}\}$ and $\mathcal{L}^{+}_k=\{z=x+iy: x>0, y=2\pi\mathcal{N}_{k}\}$ if $m_k=1$ or $n_k=1$. In order to obtain a continuous function we consider the function $\psi_k:[0,\infty)\to[0,\infty)$ defined by $u_{m_{k+1},n_{k+1}}(x)=u_{m_k,n_k}(\psi_k(x))$. The function $\psi_k$ is closely related to the function $\phi$ considered in \eqref{diffeo  fajr  a}. In fact, denoting by $\phi_k$ the function $\phi$ corresponding to $(m_k,2n_k)$ and $(m_{k+1},2n_{k+1})$, we have
\begin{equation}\label{recuree1pm-diff-00}
\begin{split}
\psi_k(x)=l\phi_k\left(\frac{x}{l}+s_{m_{k+1},n_{k+1}}\right)-ls_{m_{k},n_{k}}.
\end{split}
\end{equation}
We then define $U:\{z\in\mathbb{C}: \text{Re}\ z\geq 0\}\to \mathbb{C}$ by interpolating between $u_{m_{k+1},n_{k+1}}$ and $u_{m_{k},n_{k}}$ as follows: if $2\pi \mathcal{N}_{k-1}\leq y<2\pi\mathcal{N}_k$, say $y=2\pi \mathcal{N}_{k-1}+2\pi N_kt$ where $0\leq t<1$, then we put
\begin{equation*}
\begin{split}
U(x+iy)=u_{m_{k},n_{k}}((1-t)x+t\psi_k(x)+iy)=u_{m_{k},n_{k}}(x+iy+t(\psi_k(x)-x)).
\end{split}
\end{equation*}
The function $U$ defined in this way is continuous on the horizontal lines $\mathcal{L}^{+}_k$ for any $k$ in the right half-plane.

We now define a function $V$ in the left half-plane. In order to do so, we define
\begin{equation*}
\begin{split}
v_{m,n}:\{z\in\mathbb{C}: \text{Re}\ z< 0\}\to \mathbb{C},  \quad v_{m,n}(z)=g_{m,n}\left(\chi_{m,n}(z)+s_{m,n}\right).
\end{split}
\end{equation*}
Note that $v_{m,n}$ maps $(-\infty,0]$ monotonically onto $(1,2]$. For the two sequences $(m_k)$ and $(n_k)$ as before, this time we would like to define $V(z)=v_{m_k,n_k}(z)$ in the half-strip
\begin{equation}\label{halfstrip-2}
\begin{split}
\hat{\Xi}_k^{-}=\{x+iy: x<0, \ 2\pi \mathcal{N}_{k-1}\leq y<2\pi \mathcal{N}_k\},
\end{split}
\end{equation}
but again this function would be discontinuous on the horizontal lines $\mathcal{L}^{-}_{k-1}=\{z=x+iy: x<0, y=2\pi\mathcal{N}_{k-1}\}$ and $\mathcal{L}^{-}_k=\{z=x+iy: x<0, y=2\pi\mathcal{N}_{k}\}$ if $m_k=1$ or $n_k=1$. In order to obtain a continuous function we again interpolate between $v_{m_{k+1},n_{k+1}}$ and $v_{m_k,n_k}$. Similarly as before we consider the map $\psi_k:(-\infty,0]\to (-\infty,0]$ defined by $v_{m_{k+1},n_{k+1}}(x)=v_{m_k,n_k}(\psi_k(x))$. More precisely, denoting by $\phi_k$ the function $\phi$ corresponding to $(m_k,2n_k)$ and $(m_{k+1},2n_{k+1})$, we have
\begin{equation}\label{recuree1pm-diff-01}
\begin{split}
\psi_k(x)=N_k\phi_k\left(\frac{x}{N_{k+1}}+s_{m_{k+1},n_{k+1}}\right)-N_ks_{m_{k},n_{k}}.
\end{split}
\end{equation}
Then we define $V:\{z\in \mathbb{C}:
\text{Re}\ z\leq 0\}\to \mathbb{C}$ by interpolating between $v_{m_{k+1},n_{k+1}}$ and $v_{m_k,n_k}$ as follows: if $2\pi \mathcal{N}_{k-1}\leq y<2\pi \mathcal{N}_k$, say $y=2\pi \mathcal{N}_{k-1}+2\pi N_kt$ where $0\leq t<1$, then we put
\begin{equation*}
\begin{split}
V(x+iy)=v_{m_{k},n_{k}}((1-t)x+t\psi_k(x)+iy)=v_{m_{k},n_{k}}(x+iy+t(\psi_k(x)-x)).
\end{split}
\end{equation*}
This map $V$ is continuous on the horizontal lines $\mathcal{L}^{-}_{k}$ for any $k$ in the left half-plane.

Note that $U$ and $V$ coincide on the imaginary axis. With the definitions of the maps $U$ and $V$, we define the map
\begin{equation}\label{recuree1pm-diff-1}
\begin{split}
G(z)=\left\{
       \begin{array}{ll}
         U(z),   &  \text{if} \quad |\arg z|\leq \frac{\pi}{2}; \\
         V(z),   &  \text{if} \quad |\arg(-z)|\leq \frac{\pi}{2},
       \end{array}
      \right.
\end{split}
\end{equation}
which is continuous on the horizontal lines $\mathcal{L}^{+}_k$ in the right half-plane and $\mathcal{L}^{-}_{k}$ in the left half-plane for any integer $k$ respectively. In the section below, we will show that the map $G$ defined in \eqref{recuree1pm-diff-1} is quasiregular in the plane and also satisfies the hypothesis of the Teichm\"uller--Wittich--Belinskii theorem.

\subsection{Estimation of the dilatation}\label{Estimation of the dilatation} 

Now we begin to estimate the dilatation of the functions $U$, $V$ and $G$ defined in previous section. For a quasiregular map $f$, let
\begin{equation*}
\begin{split}
\mu_f(z)=\frac{f_{\overline{z}}(z)}{f_z(z)} \ \ \text{and} \ \ K_f(z)=\frac{1+|\mu_f(z)|}{1-|\mu_f(z)|}.
\end{split}
\end{equation*}
In order to apply the Teichm\"uller--Wittich--Belinskii theorem, we have to estimate~$K_G(z)-1$. We note that
\begin{equation*}
\begin{split}
K_G(z)-1=\frac{2|\mu_G(z)|}{1-|\mu_G(z)|}=\frac{2|\mu_G(z)|(1+|\mu_G(z)|)}{1-|\mu_G(z)|^2}\leq \frac{4|\mu_G(z)|}{1-|\mu_G(z)|^2}.
\end{split}
\end{equation*}
By the explicit expressions of $\tilde{g}_{m_k,n_k}$ in \eqref{diffeo  fajr  a-j}, the arguments for estimating the dilatation of $G$ will be similar in either of the three cases (I), (II) and (III). Below we shall first consider the first case (I).

We begin by estimating $K_{U}-1$ in the right half-plane $\mathbb{H}^+=\{z\in\mathbb{C}: \text{Re}\ z>0\}$. Let $2\pi \mathcal{N}_{k-1}\leq y<2\pi\mathcal{N}_k$ so that $0\leq t=(y-2\pi\mathcal{N}_{k-1})/(2\pi N_k)<1$. Then $U(z)=u_{m_k,n_k}(q(z))$ with
\begin{equation}\label{recuree1pm59 final-3}
\begin{split}
q(x+iy)=x+iy+t(\psi_k(x)-x)=x+iy+\frac{y-2\pi\mathcal{N}_{k-1}}{2\pi N_k}(\psi_k(x)-x),
\end{split}
\end{equation}
where $\psi_k(x)$ takes the form in \eqref{recuree1pm-diff-00}. Thus
\begin{equation}\label{recuree1pm60 final-4}
\begin{split}
q_z(z)=1+a(z)-ib(z) \ \ \text{and} \ \ q_{\overline{z}}=a(z)+ib(z)
\end{split}
\end{equation}
with
\begin{equation}\label{recuree1pm61 final-5}
\begin{split}
a(x+iy)=\frac{t}{2}(\psi_k'(x)-1) \ \ \text{and} \ \ b(x+iy)=\frac{1}{4\pi N_k}(\psi_k(x)-x).
\end{split}
\end{equation}
Note that if $a(z)<0$, then
\begin{equation*}
\begin{split}
a(z)=\frac{t}{2}(\psi'_k(x)-1)\geq \frac{1}{2}(\psi'_k(x)-1)
\end{split}
\end{equation*}
and thus $1+2a(z)\geq \psi'_k(z)>0$. Then we have $1+2a(z)\geq \min\{1,\psi'_k(z)\}>0$ as well as $1+a(z)>0$ in any case.

We deduce from \eqref{recuree1pm59 final-3} that
\begin{equation*}
\begin{split}
|\mu_U(z)|^2=|\mu_q(z)|^2=\frac{a(z)^2+b(z)^2}{(1+a(z))^2+b(z)^2}
\end{split}
\end{equation*}
and thus
\begin{equation*}
\begin{split}
K_U(z)-1 &\leq \frac{4|\mu_q(z)|}{1-|\mu_q(z)|^2}\leq \frac{4\sqrt{(1+a(z))^2+b(z)^2}\sqrt{a(z)^2+b(z)^2}}{1+2a(z)}\\
&\leq \frac{4(1+a(z)+|b(z)|)(a(z)+|b(z)|)}{1+2a(z)}\leq \frac{4(1+r(x))r(x)}{\min\{1,\psi_k'(x)\}},
\end{split}
\end{equation*}
where
\begin{equation}\label{recuree1pm68 final-7}
\begin{split}
r(x)=|\psi_k'(x)-1|+|\psi_k(x)-x|.
\end{split}
\end{equation}

In order to estimate $K_U(z)-1$ for $z\in \hat{\Xi}_k^{+}$ defined in \eqref{halfstrip-1}, we recall from Lemma~\ref{Lemma 0} that
\begin{equation}\label{recuree1pm71 fina-1}
\begin{split}
|\phi_{k}(x)-x|=o(1), \quad x\to\infty.
\end{split}
\end{equation}
By the definition of $\psi_k(x)$ in \eqref{recuree1pm-diff-00} we have
$\psi_{k}(x)-x=l\phi_{k}(x/l+s_{m_{k+1},n_{k+1}})-l(x/l+s_{m_{k+1},n_{k+1}})+l(s_{m_{k+1},n_{k+1}}-s_{m_k,n_k})$ and, together with Lemma~\ref{Lemma b} and the estimate in \eqref{recuree1pm71 fina-1}, thus
\begin{equation}\label{recuree1-main eqiqu-uy-1-pr-1}
\begin{split}
|\psi_{k}(x)-x|=O(1), \quad x\to\infty.
\end{split}
\end{equation}
Further, by Lemma~\ref{Lemma 0} and Lemma~\ref{Lemma b} we also have
\begin{equation*}
\begin{split}
|\psi'_{k}(x)-1|=o(1), \quad x\to\infty.
\end{split}
\end{equation*}
The last two equations imply that the functions $r(x)$ defined by \eqref{recuree1pm68 final-7} satisfies $r(x)=O(1)$ for $z\in \hat{\Xi}_k^{+}$. Note that the affine mapping $\chi_{m_k,n_k}:\mathbb{H}^{+}\to \mathbb{H}^{+}$ defined in \eqref{diffeo  fajr  ayt} satisfies $\chi_{m_k,n_k}(z)=x/l+iy/N_k$ and so $|\mu_{\chi_{m_k,n_k}}|=(N_k-l)/(N_k+l)<1$. Thus $U$ is quasiregular in the right half-plane $\mathbb{H}^+$ and
\begin{equation*}
\begin{split}
K_U(z)-1=O(1), \quad x\to\infty
\end{split}
\end{equation*}
for $z\in \hat{\Xi}_k^{+}$. We denote
\begin{equation*}
\begin{split}
S_{1,k_i}=\{x+iy: x>0, \ 2\pi\mathcal{N}_{k_i-1}<y<2\pi \mathcal{N}_{k_i}, \ m_{k_i}=1\}
\end{split}
\end{equation*}
and
\begin{equation*}
\begin{split}
S_{2,k_j}=\{x+iy: x>0, \ 2\pi\mathcal{N}_{k_j-1}<y<2\pi \mathcal{N}_{k_j}, \ n_{k_j}=1\}.
\end{split}
\end{equation*}
Recall that $m_k=n_k=0$ for $k=1,2,3$. Let $\varepsilon>0$ be small. Define $\gamma$ as follows: $\gamma=\varepsilon$ when $\lambda_2=0$ and $\gamma=\lambda_2$ when $\lambda_2>0$. Then, together with the estimates in \eqref{recuree3-a3-a3} and \eqref{recuree4-a4-a4}, we have
\begin{equation*}
\begin{split}
\int_{S_{1,k_i}}\frac{dxdy}{x^2+y^2}\leq \int_{S_{1,k_i}}\frac{dxdy}{x^2+4\pi^2\mathcal{N}_{k_i-1}^2}\leq 2\pi N_{k_i}\int_0^\infty\frac{dx}{x^2+4\pi^2\mathcal{N}_{k_i-1}^2}=\frac{\pi N_{k_i}}{2\mathcal{N}_{k_i-1}}.
\end{split}
\end{equation*}
By the choice of the two sequence of $(m_k)$ and $(n_k)$ we easily see that
\begin{equation}\label{recuree1pm79-0qjw-1}
\begin{split}
\mathcal{N}_k \sim lk \quad \text{as} \quad k\to\infty.
\end{split}
\end{equation}
Since $N_{k_i}\leq 4$ for all $k_i$, this yields
\begin{equation*}
\begin{split}
\int_{S_{1,k_i}}\frac{dxdy}{x^2+y^2}\leq \frac{4\pi}{c(2\pi i)^{1/\gamma}}
\end{split}
\end{equation*}
for $i\geq 1$ and some positive constant $c$. Thus
\begin{equation}\label{recuree1pm81}
\begin{split}
\int_{S_{1,k_i}}\frac{K_U(z)-1}{x^2+y^2}dxdy \leq \frac{c_1}{(2\pi i)^{1/\gamma}}
\end{split}
\end{equation}
for all $i\geq 1$ and some constant $c_1$. Similarly, we also have
\begin{equation}\label{recuree1pm81 fuzu-1}
\begin{split}
\int_{S_{1,k_i-1}}\frac{K_U(z)-1}{x^2+y^2}dxdy\leq \frac{c_2}{(2\pi i)^{1/\gamma}}
\end{split}
\end{equation}
for all $i\geq 1$ and some constant $c_2$. Moreover, together with the estimates in \eqref{recuree3-b3-b3} and \eqref{recuree4-b4-b4}, we can also deduce that
\begin{equation}\label{recuree1pm81-w}
\begin{split}
\int_{S_{1,k_j}}\frac{K_U(z)-1}{x^2+y^2}dxdy \leq \frac{c_3}{(2\pi j)^{1/\gamma}}
\end{split}
\end{equation}
for all $j\geq 1$ and some constant $c_3$, and that
\begin{equation}\label{recuree1pm81 fuzu-1-w}
\begin{split}
\int_{S_{1,k_j-1}}\frac{K_U(z)-1}{x^2+y^2}dxdy\leq \frac{c_4}{(2\pi j)^{1/\gamma}}
\end{split}
\end{equation}
for all $j\geq 1$ and some constant $c_4$. Note that $U$ is meromorphic in $\mathbb{H}^{+}$ apart from the half-strips $S_{1,k_i-1}$, $S_{1,k_i}$, $S_{1,k_j-1}$ and $S_{1,k_j}$. Then the last four inequalities \eqref{recuree1pm81}-\eqref{recuree1pm81 fuzu-1-w} imply that
\begin{equation}\label{recuree1pm81 fuzu-1-w-final}
\begin{split}
\int_{z\in \mathbb{H}^{+}}\frac{K_U(z)-1}{x^2+y^2}dxdy=2\sum_{k=1}^{\infty}\int_{\Xi^{+}_{k}}\frac{K_U(z)-1}{x^2+y^2}dxdy<\infty.
\end{split}
\end{equation}

The estimate of $K_V(z)-1$ in the left half-plane $\mathbb{H}^-=\{z\in\mathbb{C}: \text{Re}\ z<0\}$ is similar. Now we have $V(z)=V_{m_k,n_k}(q(z))$ where $q(z)$ has the same form as in \eqref{recuree1pm59 final-3}. Moreover, \eqref{recuree1pm60 final-4} and \eqref{recuree1pm61 final-5} both hold for the same $a(x+iy)$ and $b(x+iy)$, but $\psi_k(x)$ taking the form in \eqref{recuree1pm-diff-01}. Instead of \eqref{recuree1pm61 final-5} we obtain
\begin{equation*}
\begin{split}
K_V(z)-1 &\leq \frac{4(1+r(x))r(x)}{\min\{1,\psi_k'(x)\}}
\end{split}
\end{equation*}
with
\begin{equation}\label{recuree1pm95}
\begin{split}
r(x)=\left|\psi'_k(x)-1\right|+\frac{1}{N_k}\left|\psi_k(x)-x\right|.
\end{split}
\end{equation}
Now, by the definition of $\psi_k(x)$ in \eqref{recuree1pm-diff-01}, we have
\begin{equation*}
\begin{split}
& \frac{1}{N_k}\left|\psi_k(x)-x\right|=\left|\phi_k\left(\frac{x}{N_{k+1}}+s_{m_{k+1},n_{k+1}}\right)-\frac{x}{N_{k}}-s_{m_{k},n_{k}}\right|\\
=&\
\left|\phi_k\left(\frac{x}{N_{k+1}}+s_{m_{k+1},n_{k+1}}\right)-\frac{N_{k+1}}{N_{k}}\left(\frac{x}{N_{k+1}}+s_{m_{k+1},n_{k+1}}\right)+\frac{N_{k+1}}{N_{k}}s_{m_{k+1},n_{k+1}}-s_{m_{k},n_{k}}\right|
\end{split}
\end{equation*}
and
\begin{equation*}
\begin{split}
\left|\psi'_k(x)-1\right|=\frac{N_k}{N_{k+1}}\left|\phi'_k\left(\frac{x}{N_{k+1}}+s_{m_{k+1},n_{k+1}}\right)-\frac{N_{k+1}}{N_k}\right|.
\end{split}
\end{equation*}
We thus have by Lemma~\ref{Lemma 0} and Lemma~\ref{Lemma b} that
\begin{equation}\label{recuree1-main eqiqu-uy-1-pr-2}
\begin{split}
\frac{1}{N_k}\left|\psi_k(x)-x\right|=O(1), \quad x\to-\infty
\end{split}
\end{equation}
and also that
\begin{equation*}
\begin{split}
&\left|\psi'_k(x)-1\right|=o(1), \quad x\to-\infty.
\end{split}
\end{equation*}
The last two inequalities imply that the function $r(x)$ defined in \eqref{recuree1pm95} satisfies $r(x)=O(1)$ for $z\in \hat{\Xi}_k^{-}$ defined in \eqref{halfstrip-2}. Now we denote
\begin{equation*}
\begin{split}
T_{1,k_i}=\{x+iy: x<0, \ 2\pi\mathcal{N}_{k_i-1}<y<2\pi \mathcal{N}_{k_i}, \ m_{k_i}=1\}
\end{split}
\end{equation*}
and
\begin{equation*}
\begin{split}
T_{2,k_j}=\{x+iy: x<0, \ 2\pi\mathcal{N}_{k_j-1}<y<2\pi \mathcal{N}_{k_j}, \ n_{k_j}=1\}.
\end{split}
\end{equation*}
Note that the affine mapping $\chi_{m_k,n_k}:\mathbb{H}^{-}\to \mathbb{H}^{-}$ defined in \eqref{diffeo  fajr  ayt} satisfies $\chi_{m_k,n_k}(z)=(x+iy)/N_k$ and thus is analytic there. Then by the same arguments as the estimate for $K_U-1$ in the right half-plane, we also conclude that $V$ is quasiregular in the left half-plane and that
\begin{equation*}
\begin{split}
K_V(z)-1=O(1), \quad x\to-\infty
\end{split}
\end{equation*}
for $z\in T_{1,k_i}$, $z\in T_{1,k_i+1}$, $z\in T_{1,k_j}$ and $z\in T_{1,k_j+1}$. This finally implies that
\begin{equation}\label{recuree1pm81-uu-final}
\begin{split}
\int_{z\in \mathbb{H}^{-}}\frac{K_V(z)-1}{x^2+y^2}dxdy =2\sum_{k=1}^{\infty}\int_{\Xi_k^{-}}\frac{K_V(z)-1}{x^2+y^2}dxdy <\infty.
\end{split}
\end{equation}
The two inequalities in \eqref{recuree1pm81 fuzu-1-w-final} and \eqref{recuree1pm81-uu-final} shows that, if $r>0$, then
\begin{equation}\label{recuree1pm116}
\begin{split}
\int_{|z|>r}\frac{K_G(z)-1}{x^2+y^2}dxdy <\infty.
\end{split}
\end{equation}

For the case (II), since $l=3$ and since the sequence $(n_k)$ are chosen in the way that $n_k=1$ for all $k\geq 4$, we also have $N_k\leq4$ for all $k\geq4$. In this case, we may estimate the dilatation of $U$ in the half-stripes $S_{1,k_i-1}$ and $S_{1,k_i}$ of the right half-plane and the dilatation of $V$ in the half-stirps $T_{1,k_i-1}$ and $T_{1,k_i}$ of the left half-plane similarly as in the case (I). We may also estimate the dilatation of $U$ in the stripes $\hat{\Xi}_k^{+}$ for the case $k=2,3$ in a similar way. We omit the details. Moreover, by the choice of the two sequences $(m_k)$ and $(n_k)$, in the stripe $\hat{\Xi}_1^{+}$ we have
\begin{equation*}
\begin{split}
\int_{\hat{\Xi}_1^{+},|z|>1}\frac{K_U(z)-1}{x^2+y^2}dxdy\leq O(1)\cdot 2\pi\int_{x>1}\frac{dx}{x^2}=O(1)\cdot 2\pi.
\end{split}
\end{equation*}
Together with this estimate we finally conclude that the inequality in \eqref{recuree1pm116} also holds.

For the case (III), since $l=4$ and since the two sequences $(m_k)$ and $(n_k)$ are chosen in the way that $m_k=n_k=1$ for all $k\geq 4$, we have $N_k=4$ for all $k\geq4$. Thus we only need to estimate the dilatation of $U$ in the stripes $\hat{\Xi}_k^{+}$ for the case $k=1,2,3$ to conclude that the inequality in \eqref{recuree1pm116} also holds.

Therefore, in either of the three cases (I), (II) and (III), the function $G$ defined in \eqref{recuree1pm-diff-1} is quasiregular in the plane and satisfies the hypothesis of the Teichm\"uller--Wittich--Belinskii theorem. This theorem, together with the existence theorem for quasiconformal mappings, yields that there exists a quasiconformal homeomorphism $\tau:\mathbb{C}\to\mathbb{C}$ and a meromorphic function $F$ such that
\begin{equation}\label{recuree1pm117 final-8}
\begin{split}
G(z)=F(\tau(z)) \quad  \text{and} \quad \tau(z)\sim z \quad \text{as} \quad z\to\infty.
\end{split}
\end{equation}
Then $F=G\circ \tau^{-1}$ is a locally univalent meromorphic function. Define $E=F/F'$. Then $E$ is a Bank--Laine function. Clearly, $E=f_1f_2$ for two normalized solutions of the second order differential equation \eqref{bank-laine0} with an entire coefficient $A$.

\subsection{Proof for the case~$n=1$}\label{Completion of the proof} 

We first prove Theorem~\ref{maintheorem4} for the case $n=1$. We shall consider the three cases (I), (II) and (III) together. We will use similar arguments as in the proof of Theorem~\ref{maintheorem3} to estimate the numbers of poles and zeros of $F$ respectively and also to determine the orders of $E$ and $A$.

We first estimate the number of poles of $F$. Note that $\mathcal{N}_k\sim lk$ as $k\to\infty$ by \eqref{recuree1pm79-0qjw-1}. Also note that the function $\psi_k$ in \eqref{recuree1pm-diff-00} and \eqref{recuree1pm-diff-01} both satisfy $|\psi_k(x)-x|=O(1)$ by  the estimates in \eqref{recuree1-main eqiqu-uy-1-pr-1} and \eqref{recuree1-main eqiqu-uy-1-pr-2}. Let $r>0$ and choose $k\in \mathbb{N}$ such that $2\pi\mathcal{N}_{k-1} <r \leq 2\pi \mathcal{N}_k$. It follows from the construction of $U$ and $V$ that
\begin{equation}\label{recuree1pm118  fajr19}
\left\{
\begin{array}{ll}
\sum_{i=1}^{k}m_i \leq n(r,\infty,U)\leq 2\sum_{i=1}^{k}m_i, &\\
\sum_{i=1}^{k}m_i \leq n(r,\infty,V)\leq 2\sum_{i=1}^{k}m_i &
\end{array}
\right.
\end{equation}
for large $r$. By the choice of $k_i$ in \eqref{recuree5} or \eqref{recuree6} in the proof of Lemma~\ref{Lemma a}, we easily see that $\sum_{i=1}^{k}m_i \sim (2\pi)^{-1} (\log (2\pi k))^2$ as $k\to\infty$ when $\lambda_1=0$ and that $\sum_{i=1}^{k}m_i \sim (2\pi)^{\lambda_1-1} k^{\lambda_1}$ as $k\to\infty$ when $0<\lambda_1<1$. Obviously, the latter asymptotic relation also holds for the associated sequence $(m_k)$ when $\lambda_1=1$. Then the two double inequalities in \eqref{recuree1pm118  fajr19} imply that $n(r,\infty,G)\asymp (\log r)^2$ when $\lambda_1=0$ and that $n(r,\infty,G)\asymp r^{\lambda_1}$ when $0<\lambda_1\leq 1$. By the relations in \eqref{recuree1pm117 final-8}, we have $n(r,\infty,F)\asymp (\log r)^2$ when $\lambda_1=0$ and $n(r,\infty,F)\asymp r^{\lambda_1}$ when $0<\lambda_1\leq 1$. Thus $\lambda(f_1)=\lambda_1$ in all three cases.

The estimate for the number of zeros of $F$ is similar. Again, let $r>0$ and choose $k\in \mathbb{N}$ such that $2\pi\mathcal{N}_{k-1} <r \leq 2\pi \mathcal{N}_k$. It follows from the construction of $U$ and $V$ that
\begin{equation}\label{recuree1pm118  fajr19jpoi}
\left\{
\begin{array}{ll}
2\sum_{j=1}^{k}n_j\leq n(r,0,U)\leq 4\sum_{i=j}^{k}n_j, &\\
2\sum_{j=1}^{k}n_j\leq n(r,0,V)\leq 4\sum_{j=1}^{k}n_j &
\end{array}
\right.
\end{equation}
for large $r$. By the same arguments as before, we have $2\sum_{j=1}^{k}n_j \sim (2\pi)^{-1} (\log (2\pi k))^2$ as $k\to\infty$ when $\lambda_2=0$ and that $2\sum_{j=1}^{k}n_j \sim (2\pi)^{\lambda_2-1} k^{\lambda_2}$ as $k\to\infty$ when $0<\lambda_2\leq 1$. Then the two double inequalities in \eqref{recuree1pm118  fajr19jpoi} imply that $n(r,0,G)\asymp (\log r)^2$ when $\lambda_2=0$ and that $n(r,0,G)\asymp r^{\lambda_2}$ when $0<\lambda_2\leq 1$. By the relations in \eqref{recuree1pm117 final-8}, we have $n(r,0,F)\asymp (\log r)^2$ when $\lambda_2=0$ and $n(r,0,F)\asymp r^{\lambda_2}$ when $0<\lambda_2\leq 1$. Thus $\lambda(f_2)=\lambda_2$ in all three cases.

Next, let $\varepsilon>0$ be a small constant. Since $m_k,n_k\leq 1$ for all $k$ by the choice of the two sequences $(m_k)$ and $(n_k)$, there is a set $\Omega_1$ of finite linear measure such that the functions $P_{n_k}(e^{z})$ and $Q_{m_k}(e^z)$ satisfy
\begin{equation}\label{growth-1}
\left\{
\begin{array}{ll}
\exp(-(2n_k+\varepsilon)r)\leq\left|P_{n_k}(e^{z})\right|\leq \exp((2n_k+\varepsilon)r), &\\
\exp(-(m_k+\varepsilon)r)\leq\left|Q_{m_k}(e^{z})\right|\leq \exp((m_k+\varepsilon)r) &
\end{array}
\right.
\end{equation}
both hold for all $z\in \mathbb{C}$ such that $|z|\not\in \Omega_1$ and all pairs $(m_k,2n_k)$. This implies that
\begin{equation}\label{growth-2}
\begin{split}
|g_{m_k,n_k}(z)|\leq \exp((m_k+\varepsilon)r+(2n_k+\varepsilon)r)\left|\exp(e^z)\right|
\end{split}
\end{equation}
for all $z\in \mathbb{C}$ such that $|z|\not\in \Omega_1$ and all pairs $(m_k,2n_k)$. For simplicity, we denote $L_k=m_k+2n_k$. Clearly this implies that there is a set $\Omega_2$ of finite linear measure such that
\begin{equation}\label{growth-3}
\left\{
\begin{array}{ll}
|v_{m_k,n_k}(z)|\leq \exp((L_k+2\varepsilon)r/N_k)\left|\exp(e^{z/N_k})\right| & \text{for} \quad z\in \mathbb{H}^{-} \quad \text{and} \quad |z|\not\in \Omega_2,\\
|u_{m_k,n_k}(z)|\leq \exp((L_k+2\varepsilon)r/l)\left|\exp(e^{z/l})\right| & \text{for} \quad z\in \mathbb{H}^{+} \quad \text{and} \quad |z|\not\in \Omega_2
\end{array}
\right.
\end{equation}
for all pairs $(m_k,2n_k)$.

Let $z=x+iy\in \mathbb{H}^{-}$ with $\text{Im}\ z\geq 0$ and $k\in \mathbb{N}$ with $2\pi\mathcal{N}_{k-1}\leq \text{Im}\ z<2\pi\mathcal{N}_k$. With $t=(y-2\pi\mathcal{N}_{k-1})/{2\pi N_k}$ and $\psi_k$ defined in \eqref{recuree1pm-diff-01}, we have $0\leq t<1$ and
\begin{equation}\label{growth-4}
\begin{split}
|V(x+iy)|=|v_{m_k,n_k}((1-t)x+t\psi_k(x)+iy)|\leq \exp((L_k+3\varepsilon)r/N_k).
\end{split}
\end{equation}
Since $L_k\leq 3$ and $N_k=m_k+2n_k+1\geq 1$ for all $k$, thus we have
\begin{equation}\label{recuree1pm125  fajr20}
\begin{split}
|V(z)|\leq \exp((3+3\varepsilon)r)
\end{split}
\end{equation}
for $z\in \mathbb{H}^{-}$ and $|z|\not\in \Omega_2$. For $z\in \mathbb{H}^{+}$ and $|z|\not\in \Omega_1$, by the inequality in \eqref{growth-2} we have
\begin{equation*}
\begin{split}
|g_{m_k,n_k}(z)|\leq \exp((L_k+2\varepsilon)r/l)\exp\left(e^{(\text{Re}\  z)/l}\right).
\end{split}
\end{equation*}
Again, assuming that $\text{Im}\ z\geq 0$, we choose $k\in \mathbb{N}$ such that $2\pi \mathcal{N}_{k-1}\leq \text{Im}\ z <2\pi \mathcal{N}_k$. Then, with $U(z)=u_{m_k,n_k}(q(z))=g_{m_k,n_k}(q(z)+s_{m_k,n_k})$ where $q(z)$ is defined by \eqref{recuree1pm59 final-3}, we have
\begin{equation}\label{recuree1pm126  fajr21hug}
\begin{split}
|U(z)|\leq \exp((L_k+2\varepsilon)(\text{Re}(q(z))/l+s_{m_k,n_k})\exp\left(e^{\text{Re}(q(z))/l+s_{m_k,n_k}}\right)
\end{split}
\end{equation}
for $z\in \mathbb{H}^{+}$ and $|z|\not\in \Omega_2$. Since $m_k$ and $n_k$ are either $0$ or $1$ and since $s_{m_k,n_k}=O(1)$ by Lemma~\ref{Lemma b}, we deduce from \eqref{recuree1pm59 final-3} and \eqref{recuree1pm126  fajr21hug} that
\begin{equation*}
\begin{split}
\log|U(z)|&\leq (L_k+2\varepsilon)\text{Re}(q(z))/l+s_{m_k,n_k}+\exp(\text{Re}(q(z))/l+s_{m_k,n_k})\\
&\leq \exp((1+o(1))|z|/l)+O(1)=\exp((1+o(1))|z|/l)
\end{split}
\end{equation*}
as $z\to\infty$ in $\mathbb{H}^{+}$ and $|z|\not\in \Omega_2$. Together with \eqref{recuree1pm125  fajr20} we conclude that
\begin{equation*}
\begin{split}
\log|G(z)|\leq \exp((1+o(1))|z|/l)
\end{split}
\end{equation*}
as $|z|\to\infty$ outside a set $\Omega_3$ of finite linear measure. Hence \eqref{recuree1pm117 final-8} yields that
\begin{equation*}
\begin{split}
\log|F(z)|\leq \exp((1+o(1))|z|/l)
\end{split}
\end{equation*}
as $|z|\to\infty$ outside a set $\Omega_4$ of finite linear measure. The lemma on the logarithmic derivative now implies that $E=F/F'$ satisfies
\begin{equation}\label{prox esti-1}
\begin{split}
m\left(r,\frac{1}{E}\right)=O(r)
\end{split}
\end{equation}
as $|z|\to\infty$ outside a set of finite linear measure. Moreover, since all zeros of $E$ are simple and since $\lambda_1\leq \lambda_2$, the previous estimates on the poles and zeros of $F$ imply that, when $\lambda_2=0$,
\begin{equation}\label{zero esti-1}
\begin{split}
N(r,0,E)\leq c_1(\log r)^3
\end{split}
\end{equation}
for some positive constant $c_1$ and all large $r$ or, when $0<\lambda_2\leq 1$,
\begin{equation}\label{zero esti-2}
\begin{split}
c_3 r^{\lambda_2} \leq N(r,0,E)\leq c_2 r^{\lambda_2}
\end{split}
\end{equation}
for two positive constants $c_2,c_3$ and all large $r$. By the estimate in \eqref{prox esti-1} together with the estimates in \eqref{zero esti-1} or \eqref{zero esti-2}, the first main theorem implies that $T(r,E)=T(r,1/E)+O(1)=O(r)$. Thus $\lambda(E)\leq \rho(E)\leq 1$. In particular, if $\lambda_2=1$, then the estimate in \eqref{zero esti-2} shows that $\lambda(E)=\rho(E)=1$. If $\lambda_2<1$, then by the classical Hadamard factorization theorem we have $E(z)=w(z)e^{p(z)}$ for an entire function $w(z)$ of order $\lambda_2$ and a linear polynomial $p(z)$. This together with the lemma on the logarithmic derivative easily yields that $T(r,E'/E)=o(1)T(r,E)$ and $T(r,E''/E)=o(1)T(r,E)$ where $r\to\infty$. Then the relation in \eqref{recuree1pm7} implies that $T(r,A)=T(r,1/E)+o(1)T(r,E)=T(r,E)+o(1)T(r,E)$. Since an independent estimate below which only uses the fact that $E$ is of finite order yields $\rho(A)=1$, we also have $\rho(E)=1$.

When $E$ is of finite order, it follows by the relation \eqref{recuree1pm7} together with the lemma on the logarithmic derivative that
\begin{equation*}
\begin{split}
m(r,A)=m\left(r,\frac{1}{E}\right)+O(\log r)=O(r).
\end{split}
\end{equation*}
Thus the first main theorem implies that $\rho(A)\leq 1$. On the other hand, we note that $g_{1,1}$ is analytic in $\hat{\Xi}_1^{+}$ and in $\check{\Xi}_{-1}^{+}$ as well as in the positive real axis $\mathbb{R}^{+}$. Recall from the definition of $\tilde{g}_{m,n}$ in \eqref{diffeo  fajr  ayt} that $\chi_{1,1}(z)=x/l+iy$ in the right half-plane. Together with the function $\tau$ in \eqref{recuree1pm117 final-8}, we have $|(\chi_{1,1}\circ\tau^{-1})(z)|\leq 2|z|/l$ for $z$ such that $|z|$ is large. Denote $\tilde{\chi}_{1,1}=\chi_{1,1}+s_{1,1}$. Then by the same arguments as in the proof of Theorem~\ref{maintheorem3} we may choose a curve in $(\tau\circ\tilde{\chi}_{1,1}^{-1})(\hat{\Xi}_1^{+}\cup\check{\Xi}_{-1}^{+}\cup \mathbb{R}^{+})$, say $\mathcal{L}^{+}$, such that $(\tilde{\chi}_{1,1}\circ\tau^{-1})(\mathcal{L}^{+})$ is the positive real axis and also that $\tau\circ\tilde{\chi}_{1,1}^{-1}$ satisfies $|(\tau\circ\tilde{\chi}_{1,1}^{-1})'(z)|\leq l|2z|$ for all $z\in (\tilde{\chi}_{1,1}\circ\tau^{-1})(\mathcal{L}^{+})$ such that $|z|$ is large. Then, by the relation in \eqref{recuree1pm7} as well as the definitions of $u_{m,n}$ in Subsection~\ref{Definition of a quasiregular map} and the relation $F=G\circ \tau^{-1}$,
we finally obtain that $\log |A(z_n)|\geq d|z_n|$ for some positive constant $d$ and an infinite sequence $(z_n)$ such that $z_n\in\mathcal{L}^{+}$ and $|z_n|$ is large, implying that $\rho(A)\geq 1$. Hence $\rho(A)=1$. This proves Theorem~\ref{maintheorem4} for the case $n=1$.

\subsection{Proof for the case~$n\geq 2$}\label{Completion of the proof-ex} 

In this subsection, we show how to extend the results in the case $n=1$ to the case $n\geq 2$ for any positive integer $n$ with the same process as in \cite[Section~5]{Bergweilereremenko2019}. For two numbers $\Lambda_1,\Lambda_2\in[0,n]$ such that $\Lambda_1\leq \Lambda_2$, we let $\lambda_1=\Lambda_1/n$ and $\lambda_2=\Lambda_2/n$. Then, we choose two sequences $(m_k)$ and $(n_k)$ associated with $\lambda_1$ and $\lambda_2$ in the same way as in Subsection~\ref{Definition of a quasiregular map}. Note that $m_k=n_k=0$, $k=1,2,3$, for the two sequences $(m_k)$ and $(n_k)$ in any case.

Let $\chi_{1,1}:\mathbb{C}\to\mathbb{C}$ be such that $\chi_{1,1}(z)=x/l+iy$ when $x\geq 0$ and $\chi_{1,1}(z)=x+iy$ when $x\leq 0$. Here $l=1$ when $\lambda_1\leq \lambda_2<1$ and $l=3$ when $\lambda_1<\lambda_2=1$ and $l=4$ when $\lambda_1=\lambda_2=1$. Then we choose the numbers $\rho_0$ and $\sigma_0$ in~\cite[Section~5]{Bergweilereremenko2019} to be $\rho_0=\sigma_0=1$ and the quasiconformal mapping $Q$ there to be $Q(z)=\chi_{1,1}(z)$.

We define a similar function $G_0$ as in previous subsection. Denote by $J^{+}$ and $J^{-}$ the preimages of the half-strips $\{z\in \mathbb{C}:\text{Re}\ z\geq0, 0\leq \text{Im}\ z\leq 2\pi\}$ and $\{z\in \mathbb{C}:\text{Re}\ z\leq0, 0\leq \text{Im}\ z\leq 2\pi\}$ under $z\mapsto \chi_{1,1}(z)$ respectively. With $s_0=\log\log 2$, we then have
\begin{equation}\label{big-integer-order-1}
G_0(z)=
\left\{
  \begin{array}{ll}
    \exp\exp(\chi_{1,1}(z)+s_0), & \text{if} \quad  z\in J^{+}, \\
    \exp\exp(z+s_0),       & \text{if} \quad  z\in J^{-}.
  \end{array}
\right.
\end{equation}
We note that $J:=J^{+}\cup J^{-}$ is bounded by the real axis and a line in the upper half-plane which, as the real axis, is mapped to $(1,\infty)$ by $G_0$. In addition to $G_0$, we will also consider a modification $G_1$ of $G_0$ defined as follows. Let $U$ and $V$ be as in Subsection~\ref{Definition of a quasiregular map}. For $\text{Re}\ z\geq 0$ we define
\begin{equation*}
U_1(z)=
\left\{
  \begin{array}{lll}
    U(z-\pi i),         & \text{if} \quad        \text{Im}\ z\geq \pi, \\
    \exp(-\exp(z+s_0),  & \text{if} \quad -\pi < \text{Im}\ z <\pi, \\
    U(z+\pi i),         & \text{if} \quad         \text{Im}\ z\leq -\pi
  \end{array}
\right.
\end{equation*}
and for $\text{Re}\ z<0$ we define
\begin{equation*}
V_1(z)=
\left\{
  \begin{array}{lll}
    V(z-\pi i),         & \text{if} \quad \text{Im}\ z\geq \pi, \\
    \exp(-\exp(z+s_0),  & \text{if} \quad -\pi < \text{Im}\ z <\pi, \\
    V(z+\pi i),         & \text{if} \quad \text{Im}\ z\leq -\pi.
  \end{array}
\right.
\end{equation*}
The map $G_1$ is then obtained by gluing $U_1$ and $V_1$ in the same way in which $U$ and $V$ were glued to obtain $G_0$. Thus we obtain
\begin{equation}\label{big-integer-order-4}
G_1(z)=
\left\{
  \begin{array}{ll}
    \exp(-\exp(\chi_{1,1}(z)+s_0)),  & \text{if} \quad  z\in J^{+}, \\
    \exp(-\exp(z+s_0)),        & \text{if} \quad  z\in J^{-}.
  \end{array}
\right.
\end{equation}
It follows that $G_1(z)=1/G_0(z)$ for $z\in J$. (As mentioned in the introduction, we may define $G_1(z)=1/G_0(z)$ for all $z\in \mathbb{C}$ so that the resulting Bank--Laine function $E=f_1f_2$ satisfies $\lambda(E)\leq n$ and $\lambda(f_1)=\lambda(f_2)$.)

For $j=1,\cdots,2n$ we put
\begin{equation*}
\begin{split}
\Sigma_j=\left\{z\in \mathbb{C}: (j-1)\frac{\pi}{n}<\arg z<j\frac{\pi}{n}\right\}.
\end{split}
\end{equation*}
Here we have used the notation $n$ instead of the $N$ in~\cite[Section~5]{Bergweilereremenko2017}. Let $H=\{z\in \mathbb{C}:\text{Im}\ z>0\}$ be the upper half-plane and $L=H/J$. Denote by $\overline{L}$ the reflection of $L$ on the real axis; that is, $\overline{L}=\{z\in \mathbb{C}: \overline{z}\in L\}$. Denote by $D_j$ the preimage of $L$ or $\overline{L}$ under the map $z\mapsto z^n$ in $\Sigma_j$ for $j=1,\cdots,2n$. With the functions $\chi_{1,1}=Q$ and $G_0$ and $G_1$ above, the process of defining a quasiregular function $G$ is the same as that in~\cite[Subsection~5.2-Subsection~5.3]{Bergweilereremenko2017}. Here we describe the definition briefly. We define the quasiregular map $G:\mathbb{C}\to\mathbb{C}$ which satisfies $G(z)=G_0(z^n)$ for $z\in D_j$ if $2\leq j\leq 2n-1$ and $G(z)=G_1(z^n)$ for $z\in D_1$ and $z\in D_{2n}$. In the remaining part of the plane we define $G$ by a suitable interpolation which will require only \eqref{big-integer-order-1} and \eqref{big-integer-order-4}. For a suitable $r_0>0$, we define $G$ in certain neighborhoods of the rays $R_j=\{z\in \mathbb{C}: \arg z=j\pi/n,|z|>r_0\}$. Near the rays $R_j$ such that $j\not=1$ and $j\not=2n-1$, we define $G=G_0(\varphi(z^n))$ for $2\leq j\leq 2n-2$ and $G=G_1(\varphi(z^n))$ for $j=2n$ for a quasiconformal mapping $\varphi$ in the plane such that $\varphi(z^n)\sim z^n$ as $z\to\infty$. In particular, $\varphi(z^n)=z^n$ when $j$ is odd. (Here we have put $D_{2n+1}=D_1$. In similar expressions the index $j$ will also be taken modulo $2n$.)
Near the rays $R_1$ and $R_{2n-1}$, we define $G=G_0(\varphi(z^n))$ and $G=G_1(\tau_1(\varphi(z^n)))$ or $G=G_1(\tau_2(\varphi(z^n)))$ for two quasiconfromal mappings $\tau_1(z)=x+i(y+\pi)/2$ and $\tau_2(z)=\overline{\tau_1(\overline{z})}$. Of course, $\varphi$, $\tau_1$ and $\tau_2$ are chosen so that $G(z)$ is continuous.
Finally, the definition of $G$ can be extended to the remaining part of the plane. We shall omit the details.

Then, together with the calculations in Subsection~\ref{Estimation of the dilatation}, we may follow the process in~\cite[Subsection~5.4]{Bergweilereremenko2017} to show that $G$ is quasiregular in the plane and satisfies the hypothesis of the Teichm\"uller--Wittich--Belinskii theorem. This theorem, together with the existence theorem for quasiconformal mappings, yields that there exists a quasiconformal homeomorphism $\tau:\mathbb{C}\to\mathbb{C}$ and a meromorphic function $F$ such that
\begin{equation*}
\begin{split}
G(z)=F(\tau(z)) \quad  \text{and} \quad \tau(z)\sim z \quad \text{as} \quad z\to\infty.
\end{split}
\end{equation*}
We shall also omit the details. Then $E=F/F'$ is a Bank--Laine function and $E=f_1f_2$ for two normalized solutions of the second order linear differential equation \eqref{bank-laine0} with an entire function $A$.

As in the case $n=1$, we may estimate the numbers of poles and zeros of $G_0(z^n)$ in the sectors $D_2,D_3,\cdots,D_{2n-1}$ and that of $G_1(z^n)$ in the sectors $D_1,D_{2n}$, respectively. Since $G$ has no poles or zeros in the region $\mathbb{C}-\sum_{j=1}^{2n}D_j$, this finally yields that $n(r,\infty,F)\asymp (\log r)^2$ when $\Lambda_1=0$ and that $n(r,\infty,F)\asymp r^{\Lambda_1}$ when $0<\Lambda_1\leq n$ and also that $n(r,0,F)\asymp (\log r)^2$ when $\Lambda_2=0$ and that $n(r,0,F)\asymp r^{\Lambda_2}$ when $0<\Lambda_2\leq n$. Moreover, by the estimates in Subsection~\ref{Completion of the proof} in the case $n=1$, we easily see that $\log |F|\leq \exp((1+o(1))|z|^n)$ as $z\to\infty$ in $\sum_{j=1}^{2n}D_j$. By the construction of $G$, we also find that the interpolation near the rays $R_j$ yields the same estimate. This implies that $\rho(E)\leq n$ and also $\rho(A)\leq n$. On the other hand, we may also estimate $|A|$ from below along certain curve in similar way as in the case $n=1$. For example, letting $\eta:\mathbb{C}\to \mathbb{C}$ be the map such that $\eta(z)=z^n$, we may choose a curve in $(\tau\circ\eta^{-1}\circ\tilde{\chi}_{1,1}^{-1})(\hat{\Xi}_1^{+}\cup\check{\Xi}_{-1}^{+}\cup \mathbb{R}^{+})$, say $Y_2'$, so that $(\tilde{\chi}_{1,1}\circ\eta\circ\tau^{-1})(Y_2')$ is the positive real axis. For a point $z$ in the real axis, we may use the Cauchy formula as in the proof of Theorem~\ref{maintheorem3} to show that $|(\tau\circ\eta^{-1}\circ\tilde{\chi}_{1,1}^{-1})'(z)|\leq l|2z|^{1/n}$ when $|z|$ is large. Then, by the same arguments as in the case $n=1$, we obtain that $\rho(A)\geq n$ and further that $\rho(E)=\rho(A)=n$. We shall also omit the details. This completes the proof of Theorem~\ref{maintheorem4}.

\section{Proof of Theorem~\ref{maintheorem5}}\label{Proof of maintheorem5} 

\subsection{Preliminary lemmas}\label{Preliminary lemmas-add} 

In this subsection, we provide some preliminary lemmas which are corrected versions of those in~\cite[Section~2]{Zhang2024}. Since the construction of the Bank--Laine functions $E$ in the case $\lambda(E)=1$ is included in Theorem~\ref{maintheorem4}, here we only consider the case where $\lambda(E)>1$.

\begin{lemma}\label{Lemma1}
Let $\gamma\in(1,\infty)$ and $\delta\in[0,1]$ be two numbers and let $\alpha:[0,\infty)\to[0,\infty)$ be a function such that $\alpha(x)=[\log(x+2\pi)]^{-3}$. Then there exists a nonnegative sequence $(m_k)$ of integers, with $m_k=0$ for $k=1,2,3$, such that the function $\mathfrak{h}:[0,\infty)\to[0,\infty)$ which satisfies $\mathfrak{h}(0)=0$ and which is linear on each interval $[2\pi (k-1), 2\pi k]$ and has slopes $m_k$ there satisfies either
\begin{enumerate}
 \item [(1)]
when $0\leq \delta\gamma \leq 1$, there is a subsequence $(m_{k_i})$ such that $m_{k_i}=1$ for $k\geq k_1$ and $m_{k}=0$ for other $k$ such that
\begin{equation*}
\begin{split}
\mathfrak{h}(x)=x^{\delta\gamma}+O(1), \quad x\to\infty
\end{split}
\end{equation*}
and, in particular, $m_k=0$ for all $k\geq k_1$ when $\delta\gamma=0$ and $m_k=1$ for all $k\geq k_1$ when $\delta\gamma=1$ and $k_{i}\geq c i^{1/\delta\gamma}$ for a fixed positive constant $c$ when $0<\delta\gamma<1$; or

\item [(2)]
when $\delta\gamma>1$, $(m_k)$ is increasing such that $m_{k+1}-m_k$ is an odd integer when $m_{k+1}>m_k$ such that
\begin{equation*}
\begin{split}
\mathfrak{h}(x)=\alpha(x)x^{\delta\gamma}+O\left(\alpha(x)x^{\delta\gamma-2}\right)+O(1), \quad x\to\infty
\end{split}
\end{equation*}
and
\begin{equation*}
\begin{split}
m_k=\delta\gamma\alpha(k)(2\pi k)^{\delta\gamma-1}+O\left(\alpha(k)k^{\delta\gamma-2}\right)+O(1), \quad k\to\infty.
\end{split}
\end{equation*}
\end{enumerate}

\end{lemma}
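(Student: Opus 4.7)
For case (1), I would mirror the construction in Lemma~\ref{Lemma a}. Setting $m_1=m_2=m_3=0$, one takes $m_k=0$ for all $k$ when $\delta\gamma=0$, so $\mathfrak{h}\equiv 0=O(1)$ trivially; one takes $m_k=1$ for every $k\geq 4$ when $\delta\gamma=1$, which gives $\mathfrak{h}(x)=x+O(1)$. For $0<\delta\gamma<1$, I would recursively let $k_i$ be the largest integer with $(2\pi(k_i-1))^{\delta\gamma}<2\pi i\leq(2\pi k_i)^{\delta\gamma}$, set $m_{k_i}=1$, and put $m_k=0$ for the remaining indices in $(k_{i-1},k_i)$. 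Then $\mathfrak{h}(2\pi k_i)=2\pi i$, and since the gap $(2\pi k_i)^{\delta\gamma}-(2\pi(k_i-1))^{\delta\gamma}$ is bounded by the mean value theorem (because $\delta\gamma<1$), the linear interpolation between grid points yields $\mathfrak{h}(x)=x^{\delta\gamma}+O(1)$ on all of $[0,\infty)$. The lower bound $k_i\geq ci^{1/(\delta\gamma)}$ is immediate from the defining inequality $2\pi i\leq(2\pi k_i)^{\delta\gamma}$.

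For case (2) with $\delta\gamma>1$, I would set $g(x)=\alpha(x)x^{\delta\gamma}$, note that $g'(x)=\delta\gamma\alpha(x)x^{\delta\gamma-1}+\alpha'(x)x^{\delta\gamma}$ with the first term dominant (since $\alpha'(x)=O(1/(x\log^4 x))$), and also that $g''(x)=O(\alpha(x)x^{\delta\gamma-2})$ since the leading term is $\delta\gamma(\delta\gamma-1)\alpha(x)x^{\delta\gamma-2}$. Then I would build $(m_k)$ by integrator-style rounding: having chosen $m_1,\dots,m_{k-1}$, pick $m_k$ among the nonnegative integers satisfying the admissibility conditions $m_k\geq m_{k-1}$ and either $m_k=m_{k-1}$ or $m_k-m_{k-1}$ odd, so as to make $\sum_{j=1}^{k}2\pi m_j$ closest to $g(2\pi k)$. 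Because the admissible values are spaced at most $2$ apart and $g(2\pi k)-g(2\pi(k-1))\to\infty$ as $k\to\infty$, the running discrepancy $E_k:=g(2\pi k)-\sum_{j=1}^{k}2\pi m_j$ stays bounded, say $|E_k|\leq 2\pi$, for every $k\geq 4$.

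With $\mathfrak{h}(2\pi k)=g(2\pi k)+O(1)$ in hand, the stated asymptotic for $\mathfrak{h}(x)$ follows from linearly interpolating on $[2\pi(k-1),2\pi k]$ and Taylor-expanding $g$, the $g''=O(\alpha(x)x^{\delta\gamma-2})$ bound controlling the error from the expansion. The stated formula for $m_k$ is read off from $2\pi m_k=g(2\pi k)-g(2\pi(k-1))+O(1)=2\pi g'(2\pi k)+O(\alpha(x)x^{\delta\gamma-2})+O(1)$, together with the decomposition of $g'$ into its leading term $\delta\gamma\alpha(2\pi k)(2\pi k)^{\delta\gamma-1}$ and the smaller corrections absorbed into the two error terms in the formula (noting that $\alpha(2\pi k)-\alpha(k)=O(1/(k\log^4 k))$ handles the switch from $\alpha(2\pi k)$ to $\alpha(k)$).

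The main technical obstacle is verifying that the odd-jump and monotonicity constraints are compatible with keeping $|E_k|$ bounded. If at some step the ideal increment $(g(2\pi k)-g(2\pi(k-1)))/(2\pi)-m_{k-1}$ lies close to an even integer, one must snap to an adjacent odd integer, which costs one unit of error; the integrator design absorbs this residue into $E_k$ and repays it at the next step, so nothing accumulates. For large $k$ the target increment diverges, so $m_k\geq m_{k-1}$ is automatic, and the admissible set of increments meets every interval of length~$2$. A small amount of separate book-keeping is needed at the transition from the initial zero regime into the growing regime, but it is essentially the same as in case~(1) and presents no conceptual difficulty.
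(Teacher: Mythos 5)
Case (1) mirrors Lemma~\ref{Lemma a} and is sound (and is essentially all the paper itself says, since it delegates the proof of this lemma to \cite[Lemma~2.2]{Zhang2024}). For case (2) the integrator idea is the right one, but two of your estimates are off in ways that matter. First, the claim $\alpha(2\pi k)-\alpha(k)=O(1/(k\log^4 k))$ is incorrect: $\alpha$ depends on its argument only through a logarithm, and $2\pi k$ and $k$ differ by a fixed \emph{multiplicative} (not additive) factor, so $\log(2\pi k+2\pi)-\log(k+2\pi)\to\log 2\pi$ and hence $\alpha(2\pi k)-\alpha(k)\asymp(\log k)^{-4}$. Multiplied by $(2\pi k)^{\delta\gamma-1}$ this contributes an error of order $k^{\delta\gamma-1}(\log k)^{-4}$, and the term $\alpha'(x)x^{\delta\gamma}$ inside $g'(x)$ contributes an error of exactly the same order. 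For $\delta\gamma>1$ this diverges and is \emph{not} $O(\alpha(k)k^{\delta\gamma-2})+O(1)$, so these corrections cannot simply be ``absorbed into the two error terms'' as you assert; they must either be kept as an explicit extra term or the stated bound on $m_k$ relaxed.

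Second, the boundedness of $E_k$ is not established by ``the admissible values are spaced at most~2 apart and $\Delta_k:=g(2\pi k)-g(2\pi(k-1))\to\infty$.'' The monotonicity constraint $m_k\ge m_{k-1}$ can bite: if at some step the integrator target $t_k:=(E_{k-1}+\Delta_k)/(2\pi)$ drops below $m_{k-1}-1$, the best admissible choice $m_k=m_{k-1}$ already pushes $E_k$ below $-2\pi$, so the naive one-step induction does not close. What is actually needed is the eventual convexity of $g$ (you already computed $g''(x)\sim\delta\gamma(\delta\gamma-1)\alpha(x)x^{\delta\gamma-2}>0$), which gives $\Delta_k\ge\Delta_{k-1}$ for large $k$ and, together with an upper bound on $m_{k-1}$ of the form $m_{k-1}\le\Delta_{k-1}/(2\pi)+C$ maintained \emph{jointly} in the induction, controls how far $t_k$ can fall below $m_{k-1}$. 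The phrase ``the target increment diverges, so $m_k\ge m_{k-1}$ is automatic'' conflates the raw increment $\Delta_k/(2\pi)$, which diverges, with $t_k-m_{k-1}$, which should stay $O(1)$ and can be negative. Both gaps are repairable but are not mere book-keeping; as written the sketch would not compile into a proof.
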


\begin{lemma}\label{Lemma2}
Let $\gamma\in(1,\infty)$ be the constant in Lemma~\ref{Lemma1} and $(m_k)$ be the sequence there. Then there exists a sequence $(n_k)$ of nonnegative integers, with $n_k=0$ for $k=1,2,3$, such that the function $h:[0,\infty)\to[0,\infty)$ which satisfies $h(0)=0$ and which is linear in the intervals $[2\pi (k-1), 2\pi k]$ and has slopes $N_k=m_k+2n_k+1$ there satisfies
\begin{equation*}
\begin{split}
h(x)=x^{\gamma}+O(x^{\gamma-2})+O(1), \quad x\to\infty
\end{split}
\end{equation*}
and
\begin{equation}\label{recuree1pm16  fajr-2}
\begin{split}
N_k=m_k+2n_k+1=\gamma(2\pi k)^{\gamma-1}+O\left(k^{\gamma-2}\right)+O(1), \quad k\to\infty.
\end{split}
\end{equation}
Moreover, the function $g(x)$ defined on $[0,\infty)$ by $h(g(x))=x^{\gamma}$ satisfies
\begin{equation}\label{recuree1pm17  fajr-1}
\begin{split}
g(x)=x+O(x^{-1})+O(x^{1-\gamma}), \quad x\to\infty
\end{split}
\end{equation}
and
\begin{equation}\label{recuree1pm18   fajr0}
\begin{split}
g'(x)=1+O(x^{-1})+O(x^{1-\gamma}), \quad x\to\infty,
\end{split}
\end{equation}
where $g'$ denotes either the left or right derivative of $g$.

\end{lemma}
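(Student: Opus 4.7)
The plan is to define $(N_k)$ directly as the integer approximations to $\gamma(2\pi k)^{\gamma-1}$ subject to a parity constraint, set $n_k = (N_k - m_k - 1)/2$, and then derive the claims about $h$ and $g$ in turn. The parity constraint arises from the relation $N_k = m_k + 2n_k + 1$: for $n_k$ to be a nonnegative integer we require $N_k - m_k - 1$ to be a nonnegative even integer, i.e.\ $N_k$ must have parity opposite to that of $m_k$, and $N_k \geq m_k + 1$. The latter inequality holds for all sufficiently large $k$ because $m_k$ from Lemma~\ref{Lemma1} is either bounded (case (1)) or grows like $\delta\gamma\alpha(k)(2\pi k)^{\delta\gamma-1} = o((2\pi k)^{\gamma-1})$ (case (2), using $\delta \leq 1$ and the logarithmic decay of $\alpha$), while the target $\gamma(2\pi k)^{\gamma-1} \to \infty$. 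The parity constraint can always be satisfied at each step by choosing $N_k$ within distance at most $1$ of any real target, since integers of a fixed parity are spaced by~$2$.

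The next step is a telescoping argument that compares $2\pi N_k$ to the exact increment
\[
\Delta_k := (2\pi k)^\gamma - (2\pi(k-1))^\gamma = 2\pi\gamma(2\pi k)^{\gamma-1} + O(k^{\gamma-2}).
\]
Setting $\tilde N_k := \Delta_k/(2\pi)$, one sees $\tilde N_k = \gamma(2\pi k)^{\gamma-1} + O(k^{\gamma-2})$. I would then use a greedy rounding rule that chooses $N_k$ as the integer of the required parity minimizing the running cumulative error $E_k := \sum_{j=1}^k (N_j - \tilde N_j)$. Since each step can adjust $E_k$ by an amount of either sign and magnitude at most $1$, this keeps $|E_k|$ bounded uniformly in~$k$. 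The resulting estimate $N_k = \gamma(2\pi k)^{\gamma-1} + O(k^{\gamma-2}) + O(1)$ is exactly \eqref{recuree1pm16  fajr-2}. Summing gives $h(2\pi k) = (2\pi k)^\gamma + 2\pi E_k = (2\pi k)^\gamma + O(1)$, and linearity of $h$ on $[2\pi(k-1), 2\pi k]$ combined with the slope estimate yields $h(x) = x^\gamma + O(x^{\gamma-2}) + O(1)$ as $x \to \infty$.

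For $g$, the defining relation $h(g(x)) = x^\gamma$ together with linearity of $h$ on $[2\pi(k-1), 2\pi k]$, with $k = k(x)$ determined by $g(x)$, gives
\[
g(x) = 2\pi(k-1) + \frac{x^\gamma - h(2\pi(k-1))}{N_k}.
\]
At leading order $g(x) \sim x$, so $2\pi k = x + O(1)$. I would apply the mean value theorem to write $x^\gamma - (2\pi(k-1))^\gamma = \gamma \xi^{\gamma-1}(x - 2\pi(k-1))$ for some $\xi \in [2\pi(k-1), x]$, expand $\xi^{\gamma-1} = x^{\gamma-1}(1 + O(x^{-1}))$ and $N_k = \gamma x^{\gamma-1}(1 + O(x^{-1}) + O(x^{1-\gamma}))$, and substitute. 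Tracking the $O(1)$ remainder from $h(2\pi(k-1)) - (2\pi(k-1))^\gamma$ and dividing by $N_k$ of order $x^{\gamma-1}$ (which introduces a factor $O(x^{1-\gamma})$) yields \eqref{recuree1pm17  fajr-1}. For the derivative, on each open interval where $g$ is linear one has $g'(x) = \gamma x^{\gamma-1}/N_k$; substituting the slope expansion of $N_k$ and $2\pi k = x + O(1)$ gives \eqref{recuree1pm18   fajr0}. Both one-sided derivatives at the breakpoints $x = h^{-1}((2\pi k)^\gamma \cdot 1)$ satisfy the same estimate because consecutive slopes differ by $O(k^{\gamma-2}) + O(1)$.

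The principal technical difficulty is the parity-preserving greedy rounding step: one must verify that the greedy strategy keeps $E_k$ bounded uniformly even though the prescribed parity of $N_k$ is dictated by the possibly irregular parities of $m_k$ inherited from Lemma~\ref{Lemma1}. This reduces to the observation that for each $k$ there are always integers of the required parity in an interval of length $2$ centered at $\tilde N_k$, which is immediate, together with the elementary fact that a random walk on $\mathbb{Z}$ whose steps are restricted only in sign but not in magnitude (within $[-1,1]$) can be kept bounded by a greedy controller. A secondary point is ensuring $n_k \geq 0$ for the small indices; here one simply sets $n_1 = n_2 = n_3 = 0$ as prescribed in the statement and adjusts any further exceptional values without affecting the asymptotics.
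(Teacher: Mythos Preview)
Your approach is correct and matches the paper's: the paper does not give a self-contained proof but refers to \cite[Lemma~2.1]{Zhang2024}, constructing a sequence $(N_k)$ with $N_k=1$ for $k=1,2,3$ and $N_k$ of parity opposite to $m_k$ for $k\geq 4$, then setting $2n_k+1=N_k-m_k$; your greedy rounding of $\tilde N_k=\Delta_k/(2\pi)$ with bounded cumulative error is exactly the mechanism behind that construction. One small slip: you write ``on each open interval where $g$ is linear,'' but $g$ is not piecewise linear---rather, $h$ is, and the formula $g'(x)=\gamma x^{\gamma-1}/N_k$ holds on each interval where $g(x)\in[2\pi(k-1),2\pi k]$; this does not affect the argument.
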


For two given numbers $\gamma\in(1,\infty)$ and $\delta\in[0,1]$, we may follow the proof
of~\cite[Lemma~2.2]{Zhang2024}, together with the comments in Lemma~\ref{Lemma a}, to obtain the desired sequence $(m_k)$ in Lemma~\ref{Lemma1}. Also, for such a sequence $(m_k)$, we may follow the proof in~\cite[Lemma~2.1]{Zhang2024} to find a positive sequence $(N_k)$ such that $N_k=1$ for $k=1,2,3$ and $N_k$ has opposite parity to $m_k$ for $k\geq 4$. The desired sequence $(n_k)$ in Lemma~\ref{Lemma2} is then defined in the way that $2n_k+1=N_k-m_k$ for $k\geq 1$. The assumptions that $m_k=0$ for $k=1,2,3$ and that $n_k=0$ for $k=1,2,3$ can be satisfied by slightly modifying the original proofs in~\cite{Zhang2024}. Also note that the sequence $(m_k)$ here can be chosen to have the properties in Lemma~\ref{Lemma a} in the case $\delta=0$, but we shall not do this in this section.

In the original version of Lemma~\ref{Lemma2} (i.e.,\cite[Lemma~2.1]{Zhang2024}), we have chosen the slopes of $h$ in the interval $[2\pi(k-1),2\pi k]$ to be $2n_k+1$. However, this yields the function $g$ defined by $h(g(x))+\mathfrak{h}(g(x))=x^{\gamma}$ there satisfies $|g(x)-x|\to\infty$ as $x\to\infty$. For such a function $g$, the corresponding function $Q$ in the definition of $G$ in \cite[Subsection~2.2]{Zhang2024} is not quasiconformal in the plane. Now, by Lemma~\ref{Lemma2}, we have the relations in \eqref{recuree1pm17  fajr-1} and \eqref{recuree1pm18   fajr0} so that $|g(x)-x|$ and $|g'(x)-1|$ both tend to $0$ as $x\to\infty$.

Below we begin to prove Theorem~\ref{maintheorem5} under the assumption $\rho\in(1/2,1)$. For a pair $(m,2n)$, we shall first modify $g_{m,n}$ in \eqref{recuree1pm9} slightly to define
\begin{equation}\label{recuree1pm1309hb-1-kjh}
\begin{split}
\hat{g}_{m,n}(z)=\frac{1}{2}[g_{m,n}(z)+1].
\end{split}
\end{equation}
Actually we shall restrict the functions in \eqref{recuree1pm1309hb-1-kjh} to the region $\{z=x+iy, y\geq4\pi\}$ of the upper half-plane and to the rest of the plane the functions
\begin{equation}\label{recuree1pm1309hb-1-kjh-j87}
\begin{split}
\check{g}_{m,n}(z)=g_{m,n}(z).
\end{split}
\end{equation}
Thus we may write
\begin{equation}\label{recuree1pm1309hb-1-kjh-1}
\begin{split}
\overline{g}_{m,n}(z)=(\overline{\chi}_{m,n}\circ g_{m,n})(z)
\end{split}
\end{equation}
for an affine mapping $\overline{\chi}: \mathbb{C}\to \mathbb{C}$ such that either $\overline{\chi}_{m,n}(z)=(z+1)/2$ or $\overline{\chi}_{m,n}(z)=z$. Let $\gamma\in(1,\infty)$ and $\delta\in[0,1]$. We choose the sequence $(m_k)$ and $(n_k)$ constructed in Lemma~\ref{Lemma1} and Lemma~\ref{Lemma2} respectively. Actually, the function $\overline{\chi}_{m_k,n_k}$ will be chosen such that $\overline{\chi}_{m_k,n_k}(z)=(z+1)/2$ when $k\geq 3$ and $\overline{\chi}_{m_k,n_k}(z)=z$ otherwise. The assumptions that $m_k=n_k=0$ for $k=1,2,3$ make it convenient to extend the construction of the entire function $A$ in the case $\rho(A)=\rho\in(1/2,1)$ to the case $\rho(A)=\rho\in(n/2,n)$ for any positive integer $n$ as in \cite[Section~5]{Bergweilereremenko2019}.

Let $m,n,\mathfrak{m},\mathfrak{n}\in \mathbb{N}$ and put $N=m+2n+1$ and $M=\mathfrak{m}+2\mathfrak{n}+1$. By the definition of $\overline{\chi}_{m,n}$ in \eqref{recuree1pm1309hb-1-kjh-1}, for each pair $(m,2n)$, we shall consider the function $\overline{\phi}:\mathbb{R}\to\mathbb{R}$ defined by
\begin{equation}\label{diffeo}
\begin{split}
(\overline{\chi}_{\mathfrak{m},\mathfrak{n}}\circ g_{\mathfrak{m},\mathfrak{n}})(x)=\overline{g}_{\mathfrak{m},\mathfrak{n}}(x)=(\overline{g}_{m,n}\circ\overline{\phi})(x)=(\overline{\chi}_{m,n}\circ g_{m,n}\circ\overline{\phi})(x).
\end{split}
\end{equation}
We will consider the functions $\overline{\phi}$ for the case that $m=m_k$, $n=n_k$, $\mathfrak{m}=m_{k+1}$ and $\mathfrak{n}=n_{k+1}$, that is, $M=N_{k+1}$ and $N=N_k$. Thus we will consider the asymptotic behaviors of $\overline{\phi}_k$ as $k\to\infty$, but in order to simplify the formulas in most situations below we suppress the dependence of $\overline{\phi}$ from $m,n$ and $\mathfrak{m},\mathfrak{n}$ from the notation.

We first consider one pair $(m,2n)$ and suppose that $\overline{\chi}_{\mathfrak{m},\mathfrak{n}}(z)=(z+1)/2$ and $\overline{\chi}_{m,n}(z)=z$. Denote also $\kappa=M/N$. Noting that $e^{Kx-e^x}=O(e^{-x})$ for any positive constant $K$ as $x\to\infty$ and $e^{-e^x}=1+O(e^{x})$ as $x\to-\infty$, we may replace the term $\sum_{j=0}^{2n_2}B_je^{jx}$ in the dominator in equation \eqref{recuree1pm16  bef0} by $e^{-e^x}\sum_{i=0}^{m_2}A_ie^{ix}+\sum_{j=0}^{2n_2}B_je^{jx}$ and follow exactly the same arguments as in the proof of Lemma~\ref{Lemma 0} to obtain that
\begin{eqnarray}
&\overline{\phi}(x)=x+O(e^{-x/2}),        \quad\quad\quad\quad\quad &\overline{\phi}'(x)=1+O(e^{-x/2}), \quad\quad x\to \infty,  \label{recuree1pm12-new-1}\\
&\overline{\phi}(x)=\kappa x+c+O(e^{-\delta|x|/2}),    \quad\quad &\overline{\phi}'(x)=\kappa+O(e^{-\delta|x|/2}),  \quad x\to -\infty            \label{recuree1pm12-new-2}
\end{eqnarray}
with
\begin{equation*}
\begin{split}
c=\frac{1}{N}\log \left[\frac{\binom{m_1+2n_1}{m_1}}{\binom{m_2+2n_2}{m_2}}\frac{N!}{M!}\right] -\frac{1}{N}\log 2 \quad \text{and} \quad \delta=\frac{1}{2}\min\{1,\kappa\}.
\end{split}
\end{equation*}
In fact, writing $\overline{g}_{m_{k+1},n_{k+1}}(x)=\overline{g}_{m_k,n_k}(\overline{\phi}_k(x))$, since in any case we have chosen $m_k=n_k=0$, $k=1,2,3$, for the two sequences $(m_k)$ and $(n_k)$, we have $\overline{g}_{m_{3},n_{3}}=[g_{m_3,n_3}+1]/2$ and $\overline{g}_{m_2,n_2}=g_{m_2,n_2}$ and thus
\begin{equation}\label{diffeo-s-2}
\overline{g}_{m_3,n_3}(x)=(\overline{g}_{m_2,n_2}\circ\overline{\phi}_{2})(x).
\end{equation}
Thus we will only use the asymptotic behaviors in \eqref{recuree1pm12-new-1} and \eqref{recuree1pm12-new-2} for the case $M=N=1$. Otherwise, we always have $\overline{\phi}_k=\phi_k$, which is an increasing diffeomorphism on $\mathbb{R}$ appearing in \eqref{recuree-1orqr-1} and \eqref{diffeo  fajr  ayt} but defined here for the pair $(m_k,2n_k)$ when $k\not=3$. By the construction of $(m_k)$ and $(n_k)$, we have $n_k\to\infty$ as $k\to\infty$ and thus the asymptotic behaviors of $\phi_k$ becomes more complicated. Below we study the asymptotic behaviors of $\overline{\phi}_k$ as $k\to\infty$.

For simplicity, we shall use the notation $\beta(x)=\delta\alpha(k)(2\pi k)^{(\delta-1)\gamma}$. Note that $\beta(k)=o(1)$ as $k\to\infty$. We see that
\begin{equation}\label{recuree1pm71}
\begin{split}
\frac{m_k}{m_k+2n_k+1}=(1+o(1))\beta(k)=o(1), \quad k\to\infty
\end{split}
\end{equation}
and thus
\begin{equation}\label{recuree1pm72}
\begin{split}
\frac{2n_k+1}{m_k+2n_k+1}=1-(1+o(1))\beta(k)=1+o(1), \quad k\to\infty.
\end{split}
\end{equation}
Moreover, with the notation $N_k=m_k+2n_k+1$, we deduce from \eqref{recuree1pm16  fajr-2} that
\begin{equation}\label{recuree1pm73}
\begin{split}
\frac{N_{k+1}}{N_k}=1+O\left(\frac{1}{k}\right), \quad k\to\infty
\end{split}
\end{equation}
and, denoting $\mathcal{N}_k=\sum_{j=1}^{k}N_j$, we have from \eqref{recuree1pm16  fajr-2} that
\begin{equation}\label{recuree1pm109}
\begin{split}
\mathcal{N}_{k}\sim (2\pi)^{\gamma-1}k^{\gamma}, \quad k\to\infty.
\end{split}
\end{equation}
The above estimates \eqref{recuree1pm71}--\eqref{recuree1pm109} are used in proving the lemmas below. We first have the following

\begin{lemma}\label{Lemma3}
Let $m,n\in \mathbb{N}$ and put $N=m+2n+1$. Let $y>0$. Then
\begin{equation}\label{recuree1pm24}
\begin{split}
\log(h_{m,n}(y)-1)=&\ -\log \binom{m+2n}{m} N!+y+N\log y\\
&\ -2\log Q_m(y)-\log\left(1+\frac{y}{N}\right)+R(y,N),
\end{split}
\end{equation}
where $R(y,N)\leq Cm/N$ for all $y>0$ and all $N\geq N_0$ for some integer $N_0$ and some positive constant $C$.
\end{lemma}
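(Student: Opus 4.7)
The plan is to represent $h_{m,n}(y)-1$ as an integral via the fundamental theorem of calculus, apply one integration by parts to isolate a main term whose logarithm is exactly the explicit five-term expression in the lemma, and then to control the two leftover integrals by a self-bounding argument driven by the elementary bound $tQ_m'(t)/Q_m(t)\leq m$.

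I would begin by noting that the series expansion of $h_{m,n}$ together with \eqref{recuree1pm10} (rewritten in the variable $w=e^z$) gives $h_{m,n}(0)=1$ and $h_{m,n}'(w)=\bigl[\binom{m+2n}{m}(N-1)!\bigr]^{-1}e^ww^{N-1}Q_m(w)^{-2}$, so that
\begin{equation*}
h_{m,n}(y)-1=\frac{1}{\binom{m+2n}{m}(N-1)!}\int_0^y\frac{e^tt^{N-1}}{Q_m(t)^2}\,dt.
\end{equation*}
Exploiting the identity $\tfrac{d}{dt}(e^tt^N)=e^tt^{N-1}(t+N)$ and integrating by parts against the factor $[(t+N)Q_m(t)^2]^{-1}$, the boundary term at $t=0$ vanishes (since $N\geq 1$) and the identity
\begin{equation*}
\int_0^y\frac{e^tt^{N-1}}{Q_m(t)^2}\,dt=\frac{e^yy^N}{(y+N)Q_m(y)^2}+I_1+2I_2
\end{equation*}
emerges, where $I_1:=\int_0^y\tfrac{e^tt^N}{(t+N)^2Q_m(t)^2}\,dt\geq 0$ and $I_2:=\int_0^y\tfrac{e^tt^NQ_m'(t)}{(t+N)Q_m(t)^3}\,dt\geq 0$. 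Writing $y+N=N(1+y/N)$, the main term becomes $M_0:=e^yy^N\bigl[\binom{m+2n}{m}N!\,(1+y/N)\,Q_m(y)^2\bigr]^{-1}$, whose logarithm exactly reproduces the five explicit terms in the lemma's formula.

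To bound $I_1$ and $I_2$, I would use the fact that $Q_m$ has nonnegative coefficients and degree $m$, so
\begin{equation*}
\frac{tQ_m'(t)}{Q_m(t)}=\frac{\sum_{i=1}^m iA_it^i}{\sum_{i=0}^m A_it^i}\leq m \quad\text{for all } t>0;
\end{equation*}
combined with $t/(t+N)^2\leq 1/(4N)$ (from AM--GM, $(t+N)^2\geq 4tN$) and $1/(t+N)\leq 1/N$, the integrands of $I_1$ and $I_2$ are at most $(4N)^{-1}$ and $m/N$ times the original integrand $e^tt^{N-1}/Q_m(t)^2$, respectively. Hence
\begin{equation*}
\frac{I_1+2I_2}{\binom{m+2n}{m}(N-1)!}\leq \frac{1+8m}{4N}\bigl(h_{m,n}(y)-1\bigr).
\end{equation*}
Substituting back into the integration-by-parts identity and solving for the ratio yields $(h_{m,n}(y)-1)/M_0\leq 1/\bigl(1-(1+8m)/(4N)\bigr)$ whenever $N\geq N_0$ with $N_0$ chosen so that $(1+8m)/(4N)\leq 1/2$; taking logarithms gives $R(y,N)\leq -\log\bigl(1-(1+8m)/(4N)\bigr)\leq (1+8m)/(2N)$, which is bounded by $Cm/N$ for $m\geq 1$ and a suitable absolute constant $C$.

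The main obstacle is this self-bounding step: the estimate on $I_1+2I_2$ is expressed as a fraction of $h_{m,n}(y)-1$ itself rather than directly in terms of $M_0$, so the division by $1-(1+8m)/(4N)$ forces $N_0$ to be chosen large compared to $m$. Happily, in the application the sequences $(m_k)$ and $(n_k)$ produced by Lemmas~\ref{Lemma1} and~\ref{Lemma2} satisfy $m_k/N_k\to 0$, so for large $k$ the required smallness is automatic and the remainder estimate $R\leq Cm/N$ holds uniformly in $y>0$.
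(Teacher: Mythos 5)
Your proof is correct, and it takes a genuinely different route from the paper's. Both proofs start from the integral representation $h_{m,n}(y)-1=\bigl[\binom{m+2n}{m}(N-1)!\bigr]^{-1}\int_0^y e^t t^{N-1}Q_m(t)^{-2}\,dt$ and integrate by parts once, but with different choices. The paper (invoking ``Lagrange's version of Taylor's theorem'', but effectively integrating $t^{N-1}\,dt$ to $t^N/N$) writes $h_{m,n}(y)-1=\bigl[\binom{m+2n}{m}N!\bigr]^{-1}e^yy^N Q_m(y)^{-2}\,(1-U(y))$ and then must approximately extract the factor $N/(y+N)$ from the term $1-U(y)$, which requires splitting $U=G_1-2G_2$ and estimating $G_1$ close to $y/(y+N)$ with an explicit error $r(y,N)$ and bounding $G_2$ separately. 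Your choice---using $(e^tt^N)'=e^tt^{N-1}(t+N)$ and differentiating $[(t+N)Q_m(t)^2]^{-1}$---produces the factor $1/(y+N)$ \emph{exactly} in the main term, so that $\log M_0$ is precisely the five-term expression in the lemma and the entire error is concentrated in two nonnegative integrals $I_1,I_2$. This buys you a cleaner derivation (no $G_1,G_2$, no $r(y,N)$), the sharper conclusion $R(y,N)\geq 0$ for free, and an elementary closed-form bound $I_1+2I_2\leq\tfrac{1+8m}{4N}\cdot(h_{m,n}(y)-1)$ from the simple inequalities $tQ_m'(t)/Q_m(t)\leq m$ and $t/(t+N)^2\leq 1/(4N)$, which are more transparent than the paper's $(2m+11)y/(y+N-2m)^2$-type estimates.

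One small caveat, which you correctly flag yourself: the self-bounding step requires $\tfrac{1+8m}{4N}\leq\tfrac12$, i.e.\ the smallness of $m/N$, and not merely that $N\geq N_0$ for a fixed $N_0$. But the paper's proof has exactly the same feature (its final inequality $\tfrac{y+N}{N}(r(y,N)+2G_2)\leq C_3 m/N\leq 1/2$ also rests on $m/N\to 0$ via equation~\eqref{recuree1pm71}), so the lemma statement as written is imprecise in both treatments, and in the application the sequences $(m_k),(n_k)$ from Lemmas~\ref{Lemma1}--\ref{Lemma2} guarantee $m_k/N_k\to 0$. Likewise your observation that the $Cm/N$ form of the bound requires $m\geq 1$ (since your estimate actually gives $R\leq(1+8m)/(2N)$) mirrors a corresponding implicit assumption in the paper's argument.
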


We see that the error term $R(y,N)$ in \eqref{recuree1pm24} is uniformly bounded for all $y>0$ and all $N\geq N_0$ for an integer $N_0$. In particular, by \eqref{recuree1pm71} we see that $R(y,N)\leq 2C\beta(k)$ for all large $N$. In terms of $\overline{g}_{m,n}$ defined in \eqref{recuree1pm1309hb-1-kjh-1}, equation in \eqref{recuree1pm24} takes the form
\begin{equation}\label{recuree1pm25}
\begin{split}
\log(\overline{g}_{m,n}(x)-1)=&\ -\log \binom{m+2n}{m} N!+e^x+Nx\\
&\ -2\log Q_m(e^x)-\log\left(1+\frac{e^x}{N}\right)+R(e^x,N)-\log 2
\end{split}
\end{equation}
when $\overline{\chi}_{m,n}(z)=(z+1)/2$, or takes the form
\begin{equation}\label{recuree1pm25-fur}
\begin{split}
\log(\overline{g}_{m,n}(x)-1)=&\ -\log \binom{m+2n}{m} N!+e^x+Nx\\
&\ -2\log Q_m(e^x)-\log\left(1+\frac{e^x}{N}\right)+R(e^x,N)
\end{split}
\end{equation}
when $\overline{\chi}_{m,n}(z)=z$. In the proof of the following lemmas, we shall consider both the two estimates in \eqref{recuree1pm25} and \eqref{recuree1pm25-fur} for $\overline{g}_{m,n}$ with different pairs $(m,2n)$.

\begin{proof}[Proof of Lemma~\ref{Lemma3}]
The proof of \cite[Lemma~2.3]{Zhang2024} contains some errors, here we correct them.  It is easy to see from \eqref{recuree1pm9} and \eqref{recuree1pm10} that
\begin{equation*}
\begin{split}
h'_{m,n}(y)=\frac{1}{\binom{m+2n}{m}(m+2n)!}\frac{e^yy^{m+2n}}{Q_m(y)^2}.
\end{split}
\end{equation*}
Then, as in the proof of \cite[Lemma~2.3]{Zhang2024}, we obtain by \eqref{recuree1pm10} and Lagrange's version of Taylor's theorem that
\begin{equation}\label{recuree1pm27}
\begin{split}
h_{m,n}(y)-1=\frac{1}{\binom{m+2n}{m}(m+2n+1)!}\frac{e^yy^{m+2n+1}}{Q_m(y)^2}\left(1-U(y)\right),
\end{split}
\end{equation}
where
\begin{equation}\label{recuree1pm27 fur1}
\begin{split}
U(y)=\frac{Q_m(y)^2}{e^yy^{m+2n+1}}\int_0^{y}\frac{e^{u}u^{m+2n+1}[Q_m(u)-2Q'_m(u)]}{Q_m(u)^3}du.
\end{split}
\end{equation}
Note that $N=m+2n+1$. We write
\begin{equation}\label{recuree1pm27 fur2}
\begin{split}
U(y)=\frac{Q_m(y)^2}{e^yy^{N}}\int_0^{y}\frac{e^{u}u^{N}}{Q_m(u)^2}du-2\frac{Q_m(y)^2}{e^yy^{N}}\int_0^{y}\frac{e^{u}u^{N}Q'_m(u)}{Q_m(u)^3}du=G_1-2G_2.
\end{split}
\end{equation}
For $G_1$ in \eqref{recuree1pm27 fur2}, we write
\begin{equation}\label{recuree1pm27 fur2 rqr1}
\begin{split}
G_1=y\int_0^{1}e^{y(s-1)}s^{N}\frac{Q_m(y)^2}{Q_m(sy)^2}du=yI.
\end{split}
\end{equation}
Then from the proof of \cite[Lemma~2.3]{Zhang2024}, we have
\begin{equation}\label{recurfai   5 fur4}
\begin{split}
\left|G_1-\frac{y}{y+N}\right|=\left|Iy-\frac{y}{y+N}\right|\leq \frac{(2m+11)y}{(y+N-2m)^2}.
\end{split}
\end{equation}
Now, for $G_2$ in \eqref{recuree1pm27 fur2}, by the recursive formulas of the coefficients $A_i$ in $g_{m,n}$ in \eqref{recuree1pm9}, we see that $\sum_{i=0}^{\infty}A_i<\infty$ and also that $\sum_{i=0}^{\infty}iA_i<\infty$. Then it is easy to see that there is a constant $C_1$ independent from $m$ such that $C_1mQ_m(y)\geq yQ'_m(y)$ in both of the two cases $0<y \leq 1$ and $y>1$. Thus, for all $y>0$, we have
\begin{equation*}
\begin{split}
\frac{Q'_m(y)}{Q_m(y)}\leq \frac{C_1m}{y}.
\end{split}
\end{equation*}
It follows that
\begin{equation}\label{recurfai   8}
\begin{split}
G_2=\frac{Q_m(y)^2}{e^yy^{N}}\int_0^{y}\frac{e^{u}u^{N}Q'_m(u)}{Q_m(u)^3}du\leq \frac{c}{y}\frac{Q_m(y)^2}{e^yy^{N-1}}\int_0^{y}\frac{e^{u}u^{N-1}}{Q_m(u)^2}du.
\end{split}
\end{equation}
For the right-hand side term above, by the same reasoning as for $G_1$, we have
\begin{equation}\label{recurfai   8-iu}
\begin{split}
G_2&\leq \frac{C_1}{y}\left(\frac{y}{y+N-1}+\frac{(2m+11)y}{(y+N-2m-1)^2}\right)\\
&=C_2\left(\frac{m}{y+N}+\frac{(2m+11)m}{(y+N-2m)^2}\right)
\end{split}
\end{equation}
where $C_2$ is a suitably chosen positive constant. By combining \eqref{recuree1pm27 fur1} and \eqref{recurfai   5 fur4}, we have
\begin{equation*}
\begin{split}
\log(1-U(y))=&\ \log \left(1-G_1+2G_2\right)=\log \left(\frac{N}{y+N}-r(y,N)+2G_2\right)\\
=&\ -\log \left(1+\frac{y}{N}\right)+\log \left(1-\frac{y+N}{N}\left(r(y,N)-2G_2\right)\right),
\end{split}
\end{equation*}
where $r(y,N)$ satisfies
\begin{equation}\label{recurfai   10}
\begin{split}
|r(y,N)|\leq \frac{(2m+11)y}{(y+N-2m)^2}.
\end{split}
\end{equation}
Together with \eqref{recuree1pm27 fur2 rqr1}, \eqref{recurfai   8-iu} and \eqref{recurfai   10}, we have
\begin{equation*}
\begin{split}
\frac{y+N}{N}(|r(y,N)|+2G_2)\leq \frac{2m+11}{N}\left[\frac{(y+N)(y+C_2m)}{(y+N-2m)^2}+\frac{C_2m}{2m+11}\right].
\end{split}
\end{equation*}
By the estimate in \eqref{recuree1pm71} and the definition of $\beta(k)$, we see that there is an integer $N_0$ and a positive constant $C_3$ such that for all $N\geq N_0$ and all $y>0$,
\begin{equation}\label{recurfai   12}
\begin{split}
\frac{y+N}{N}(r(y,N)+2G_2) \leq C_3\frac{m}{N}\leq \frac{1}{2}.
\end{split}
\end{equation}
Therefore, using the inequality that $|\log (1+t)|\leq 2|t|$ for $|t|\leq 1/2$, it follows that
\begin{equation}\label{recurfai   13}
\begin{split}
\log(1-U(y))\leq C_3\frac{m}{N}
\end{split}
\end{equation}
for all $N\geq N_0$ and all $y>0$. Then, by taking the logarithm on both sides of equation \eqref{recuree1pm27} together with \eqref{recurfai   13}, we have the estimate in \eqref{recuree1pm24}. Thus the lemma is proved.

\end{proof}

Also, for technical reasons, we shall not work with $\overline{g}_{m,n}$ directly. Since the function $\overline{g}_{m,n}:\mathbb{R}\to(1,\infty)$ is an increasing homeomorphism, there exists $\overline{s}_{m,n}\in \mathbb{R}$ such that
\begin{equation}\label{trans 0}
\begin{split}
\overline{g}_{m,n}(\overline{s}_{m,n})=2.
\end{split}
\end{equation}
More specifically, when $\overline{\chi}_{m,n}(z)=(z+1)/2$, we have $g_{m,n}(\overline{s}_{m,n})=3$; when $\overline{\chi}_{m,n}(z)=z$, we have $g_{m,n}(\overline{s}_{m,n})=2$. For each pair $(m,2n)$, we shall work with the function $\overline{g}_{m,n}(z+\overline{s}_{m,n})$. Now, for the function $\overline{g}_{m,n}(z)$, we may use \eqref{recuree1pm25} or \eqref{recuree1pm25-fur} and slightly modify the proof of \cite[Lemma~2.4]{Zhang2024} to prove the following

\begin{lemma}\label{Lemma3 trans}
Let $r_0=-1.27846454\cdots$ be the unique real solution of the equation $e^{r_0}+r_0+1=0$. Then the constant $\overline{s}_{m,n}$ in \eqref{trans 0} satisfies
\begin{equation}\label{recuree1pm31=jgg}
\begin{split}
\overline{s}_{m,n}=\log N+r_0+O(\beta(k))\log N
\end{split}
\end{equation}
as $N\to\infty$, with $N=m+2n+1$.
\end{lemma}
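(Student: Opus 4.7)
The plan is to invoke Lemma~\ref{Lemma3} at $x = \overline{s}_{m,n}$, where $\overline{g}_{m,n}(\overline{s}_{m,n}) - 1 = 1$ makes the left side of \eqref{recuree1pm25} (or \eqref{recuree1pm25-fur}) vanish. Setting $\overline{s}_{m,n} = \log N + r$ so that $e^{\overline{s}_{m,n}} = Ne^r$, the identity rearranges to
\begin{equation*}
-\log\binom{m+2n}{m}N! + Ne^r + N\log N + Nr = 2\log Q_m(Ne^r) + \log(1+e^r) - R(Ne^r,N) - c_1,
\end{equation*}
with $c_1 \in \{-\log 2,\,0\}$ depending on the branch of $\overline{\chi}_{m,n}$. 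The strategy is to show that the left-hand side reduces to $N(e^r+r+1)$ plus a controlled error, forcing $e^r+r+1$ to be small and hence $r$ close to $r_0$.

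Applying Stirling's formula to $\log\binom{m+2n}{m}N! = \log N! + \log(N-1)! - \log m! - \log(N-1-m)!$ and expanding $(N-1-m)\log(N-1-m)$ around $\log N$ using $m/N = O(\beta(k))$ from \eqref{recuree1pm71}, the $\pm N\log N$ contributions cancel and the $+N$ Stirling terms combine with $Ne^r + Nr$ to produce the key quantity $N(e^r+r+1)$; everything else is absorbed into $-\log N - m\log(N/m) + O(\log N) + O(m^2/N)$. For $\log Q_m(Ne^r)$ I would exploit that the ratio of successive monomials of $Q_m$ at $y = Ne^r$ is $Ne^r/(2n+j) = e^r(1+O(m/N))$, which stays bounded away from $1$ uniformly in $j \leq m$ once $r$ lies near $r_0$ (since $e^{r_0}<1$). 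A direct geometric-series comparison then yields $Q_m(Ne^r) = (1-e^{(m+1)r})/(1-e^r) + O(m/N) = O(1)$, so $\log Q_m(Ne^r) = O(1)$. Combining with the bound $|R| = O(m/N)$ extracted from the proof of Lemma~\ref{Lemma3}, the identity collapses to
\begin{equation*}
N(e^r+r+1) = m\log(N/m) + O(\log N) + O(m^2/N) + O(1).
\end{equation*}

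Because $m/N = O(\beta(k))$ and $\log(1/\beta(k)) = O(\log N)$ (using $k \sim N^{1/\gamma}$ from \eqref{recuree1pm109} together with the polylogarithmic/polynomial form of $\beta(k)$), the dominant right-hand-side term $m\log(N/m)$ is $O(\beta(k)\,N\log N)$. Dividing through by $N$ yields $e^r+r+1 = O(\beta(k)\log N)$. Since $f(r) := e^r+r+1$ has a simple zero at $r_0$ with $f'(r_0) = e^{r_0}+1 = -r_0 \neq 0$, the inverse function theorem gives $r - r_0 = O(\beta(k)\log N)$, and therefore $\overline{s}_{m,n} = \log N + r_0 + O(\beta(k))\log N$, as claimed. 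The main technical difficulty will be making both the ratio bound for $Q_m$ and the Stirling expansion uniform in the full admissible range of $m$---which may itself grow to infinity subject only to $m/N \to 0$---and ensuring that the $Q_m$-estimate holds uniformly for $r$ on a fixed neighborhood of $r_0$; both points reduce to careful bookkeeping with the explicit constants from the proof of Lemma~\ref{Lemma3}.
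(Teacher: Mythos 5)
Your proposal follows essentially the same skeleton as the paper's proof: write $\overline{s}_{m,n}=\log N+r$, plug into the asymptotic identity from Lemma~\ref{Lemma3}, use Stirling's formula to isolate $N(e^r+r+1)$, and then use the non-degeneracy $f'(r_0)=-r_0\neq 0$ of $f(r)=e^r+r+1$ to convert the smallness of $f(r)$ into $r-r_0=O(\beta(k))\log N$. Your more careful expansion of $\log\binom{m+2n}{m}$ into $m\log(N/m)+m+O(m^2/N)+O(\log N)$ is correct, and since the extra $+m$ is absorbed by $m\log(N/m)$ once $N/m\to\infty$, you arrive at the same final bound.

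There is one genuine gap, however, and it is the circularity in your treatment of $Q_m$. You justify $\log Q_m(Ne^r)=O(1)$ by a geometric-series comparison whose ratio $Ne^r/(2n+j)=e^r(1+O(m/N))$ is bounded away from $1$ \emph{only once $r$ is already known to lie in a fixed neighborhood of $r_0<0$}; but producing a neighborhood estimate for $r$ is exactly what you are trying to prove, and your route to it passes through the $\log Q_m$ bound. The paper breaks this loop in two stages: it first uses the one-sided crude bound $\log Q_m(Ne^r)\leq D_1 m\log(Ne^r)$ for $r>0$ and $\leq D_2 m\log N$ for $r\leq 0$ (valid with no hypothesis on the location of $r$), combined with $\log\binom{m+2n}{m}\leq m\log N$ and the observation that $(m\log N)/N\to 0$, to conclude that letting $r\to\pm\infty$ in \eqref{trans 2} produces a contradiction, hence $|r|=O(1)$ and $r=r_0+o(1)$; only then does it linearize around $r_0$. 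You would need to insert an analogous first-stage argument before invoking the geometric-series bound. Once that is done, your sharper $\log Q_m(Ne^r)=O(1)$ estimate is valid but does not actually improve the final exponent, because the binomial term $\log\binom{m+2n}{m}=\Theta(m\log(N/m))$ already contributes $O(\beta(k))\log N$ after dividing by $N$ — so the extra work buys nothing. The paper's cruder but unconditional bound is the more economical choice.
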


\begin{proof}
We first consider the case $g_{m,n}(\overline{s}_{m,n})=2$. We write $\overline{s}_{m,n}=\log N+r$. Then equation \eqref{recuree1pm25-fur} yields
\begin{equation*}
\begin{split}
0=&\ -\log \binom{m+2n}{m}-\log N!+Ne^r+N\log N+Nr\\
&\ -2\log Q_m(Ne^r)-\log(1+e^r)+O(\beta(k)).
\end{split}
\end{equation*}
By Stirling's formula we have
\begin{equation}\label{trans 2}
\begin{split}
N(e^r+r+1)-\log(1+e^r)-2\log Q_m(Ne^r)=\log\binom{m+2n}{m}+\frac{1}{2}\log N+O(1).
\end{split}
\end{equation}
Note that
\begin{equation}\label{trans 3}
\begin{split}
0<\log \binom{m+2n}{m}=\log\frac{(m+2n)!}{m!(2n)!}\leq m\log N.
\end{split}
\end{equation}
Since $\sum_{i=0}^{\infty}A_i<\infty$, we see that $\log Q_m(Ne^r) \leq D_1m\log (Ne^{r})$ for all $r>0$ and some positive constant $D_1$ and also that $\log Q_m(Ne^r)\leq D_2m\log N$ for all $r\leq 0$ and some positive constant $D_2$. Moreover, by the estimate in \eqref{recuree1pm16  fajr-2}, as well as the estimate in \eqref{recuree1pm71}, we have $\log N_k=O(\log k)$ and thus $(m\log N)/N\to 0$ as $N\to\infty$. Then we obtain a contradiction from the identity in \eqref{trans 2} if we let $r\to\infty$ or $r\to-\infty$. This implies that $|r|=O(1)$ and further that $r=r_0+o(1)$ as $N\to\infty$. We write $r=r_0+t$ so that $t=o(1)$ as $N\to\infty$. We obtain from \eqref{trans 2} that
\begin{equation}\label{trans 4}
\begin{split}
0=-Nr_0t-\frac{1}{2}\log N+\log\binom{m+2n}{m}+2\log Q_m(Ne^r)+O(1)+O(Nt^2).
\end{split}
\end{equation}
By \eqref{trans 3} and the observation on $\log Q_m(Ne^r)$, this first yields that
\begin{equation*}
\begin{split}
r_0t=-\frac{1}{2}\frac{\log N}{N}+O(\beta(k))\log N+O\left(\frac{1}{N}\right)+O(t^2)
\end{split}
\end{equation*}
and hence
\begin{equation*}
\begin{split}
t=O(\beta(k))\log N.
\end{split}
\end{equation*}
For the case when $g_{m,n}(\overline{s}_{m,n})=3$, similar consideration also yields the estimate in \eqref{recuree1pm31=jgg}.
\end{proof}

For the two positive integers $M,N$ such that $M>N$, from now on we shall assume that there exists a constant $C>1$ such that $\mathfrak{m}+2\mathfrak{n}\leq C(m+2n)$. Then clearly
\begin{equation}\label{recuree1pm31}
\begin{split}
M\leq CN.
\end{split}
\end{equation}
The estimate in \eqref{recuree1pm73} implies that $C\to1$ as $N\to\infty$. Below we begin to analyse the asymptotic behaviors of $\overline{\phi}_k$ defined in \eqref{diffeo}.

For the choice of $(m_k)$ and $(n_k)$ we know that $\overline{\phi}_1=\phi_1$ is the identity in the plane and the asymptotic behaviors of $\overline{\phi}_2$ has been given in \eqref{recuree1pm12-new-1} and \eqref{recuree1pm12-new-2}. It is easy to see that all fixed point of $\overline{\phi}_2$ are near the origin. For the case $k\geq 3$, we have $\overline{\phi}_k=\phi_k$ and thus from the proof of \cite[Lemma~2.5]{Zhang2024} to obtain the following

\begin{lemma}\label{Lemma4}
Let $M,N$ be two integers as before. Then
\begin{enumerate}
  \item [(1)] if $\mathfrak{m}>m$, then $\overline{\phi}$ has at least one fixed point $p$ and each fixed point $p$ satisfies $p=\log N+O(1)$ as $k\to\infty$;
  \item [(2)] if $\mathfrak{m}=m$, then $\overline{\phi}$ has uniquely one fixed point $p$ such that $p=\log N+O(1)$ as $k\to\infty$;
  \item [(3)] if $\mathfrak{m}<m$, then $\overline{\phi}$ has uniquely one fixed point $p$ such that $p=(1-d)\log N+O(1)$ for a constant $d<1$ as $k\to\infty$;
  \item [(4)] in all of the above three cases, $\overline{\phi}(x)<x$ for $x<p+O(1)$ and $\overline{\phi}(x)>x$ for $x>p+O(1)$ and, moreover, we have $\overline{\phi}(x)\leq \log N+O(1)$ for all $x\leq\log N$.
\end{enumerate}

\end{lemma}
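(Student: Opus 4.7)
The plan is to convert the fixed-point equation $\overline{\phi}(p)=p$ into a scalar equation by equating the two sides of \eqref{recuree1pm25-fur}. For $k\geq 3$ we have $\overline{\phi}_k=\phi_k$, so $p$ is a fixed point precisely when $g_{\mathfrak{m},\mathfrak{n}}(p)=g_{m,n}(p)$. Taking logarithms of both sides minus $1$ via \eqref{recuree1pm25-fur}, the $e^p$-term cancels, and after rearrangement the fixed-point equation becomes
\begin{equation*}
(M-N)p=\log\frac{M!}{N!}+\log\frac{\binom{M-1}{\mathfrak{m}}}{\binom{N-1}{m}}+2\log\frac{Q_\mathfrak{m}(e^p)}{Q_m(e^p)}+\log\frac{1+e^p/M}{1+e^p/N}+O(\beta(k)).
\end{equation*}
(The case $k=3$ introduces an additional $-\log 2$ in $\overline{s}_{m,n}$ via Lemma~\ref{Lemma3 trans} but changes nothing structural.) By Stirling's formula together with \eqref{recuree1pm73}, $\log(M!/N!)=(M-N)\log N+O(M-N)$; the binomial quotient contributes at most $O((m+\mathfrak{m})\log N)$; and the bounded last fraction contributes $O(1)$. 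So the equation takes the schematic form $(M-N)(p-\log N)=\Psi(e^p;m,\mathfrak{m},N)+O(1)$, where $\Psi$ comes entirely from the $Q$-ratio.

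For the substitution $p=\log N+t$ in cases~(1) and~(2), and $p=(1-d)\log N+t$ in case~(3), the aim is to show $t=O(1)$. In case~(2), $\mathfrak{m}=m$, the $Q$-ratio vanishes identically, leaving a linear equation in $t$ whose unique solution satisfies $t=O(1)$; strict monotonicity of the right-hand-side difference $\log(g_{\mathfrak{m},\mathfrak{n}}(x)-1)-\log(g_{m,n}(x)-1)$, checked by differentiation using \eqref{recuree1pm10}, yields uniqueness. In case~(1), $\mathfrak{m}>m$, the leading behavior of $Q_\mathfrak{m}/Q_m$ at $e^p\asymp N$ is $A_{\mathfrak{m}}e^{(\mathfrak{m}-m)p}/A_m$, which together with the combinatorial coefficients makes the right-hand side balance the left at $p=\log N+O(1)$, producing at least one crossing; here strict monotonicity can fail due to the competition between the polynomial $Q$-growth and the linear term $(M-N)p$, so only existence, not uniqueness, is asserted. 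In case~(3), $\mathfrak{m}<m$, the analogous competition is resolved with the choice $p=(1-d)\log N+O(1)$ where $d\in(0,1)$ is determined by matching powers of $N$ between $(M-N)p$ and $2\log(Q_m/Q_\mathfrak{m})\sim 2(m-\mathfrak{m})p$ at leading order, and uniqueness follows because along $\{p=(1-d)\log N+t\}$ the difference function becomes strictly monotone in $t$ once $N$ is large enough.

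Statement (4) will follow from the sign analysis of $\Phi(x):=g_{\mathfrak{m},\mathfrak{n}}(x)-g_{m,n}(x)$: since $g_{m,n}$ is strictly increasing, one has $\overline{\phi}(x)>x$ if and only if $\Phi(x)>0$, and the arguments in the previous paragraph show that $\Phi$ changes sign from negative to positive across (the largest) fixed point $p$. For the bound $\overline{\phi}(x)\leq \log N+O(1)$ on $(-\infty,\log N]$, I would combine the monotonicity of $\overline{\phi}$ with a direct estimate of $\overline{\phi}$ at $x=\log N$ obtained by applying \eqref{recuree1pm25-fur} once more at this explicit point; the fact that $g_{m,n}(\log N)$ and $g_{\mathfrak{m},\mathfrak{n}}(\log N)$ are both of order $e^N$ with comparable constants pins down $\overline{\phi}(\log N)$ to $\log N+O(1)$.

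The main obstacle will be the uniform control, as $k\to\infty$, of the $Q$-ratio together with the error term $R(e^x,N)$ from Lemma~\ref{Lemma3} on the full range $x\in(-\infty,\log N+O(1)]$. The essential input here is the estimate $m/N=O(\beta(k))\to 0$ from \eqref{recuree1pm71}, which makes the perturbation terms uniformly small; however, $M-N$ need not itself be bounded, so care is needed with the Stirling estimate $\log(M!/N!)=(M-N)\log N+O(M-N)$ to ensure that the implied constants survive when one divides through by $M-N$ in cases~(1) and~(2). A secondary subtlety, required for the last clause of~(4), is that $\overline{\phi}$ need not be real-analytic through $k=3$ where the multipliers $\overline{\chi}$ differ, but this affects only a bounded additive constant via Lemma~\ref{Lemma3 trans} and is absorbed into the $O(1)$ terms.
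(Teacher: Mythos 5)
The paper gives no proof here: it cites \cite[Lemma~2.5]{Zhang2024} and moves on, so there is nothing in-source to compare against. Your overall strategy of reducing the fixed-point equation $\overline{\phi}(p)=p$ to the scalar identity $\log(g_{\mathfrak{m},\mathfrak{n}}(p)-1)=\log(g_{m,n}(p)-1)$ and expanding both sides with \eqref{recuree1pm25-fur} is the natural one, and you are right that the $e^p$ terms cancel and that the error $R(e^p,N)=O(\beta(k))$ is negligible. However, the execution has two concrete gaps that currently invalidate the argument.

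First, your treatment of case~(2) rests on the claim that "the $Q$-ratio vanishes identically" when $\mathfrak{m}=m$. This is false. The notation $Q_m$ in \eqref{recuree1pm8} suppresses a dependence on $n$: the coefficients are $A_i=a_i/a_0=\frac{(2n)!}{(i+2n)!}$, so $Q_m=Q_{m,n}$ genuinely depends on $n$. When $\mathfrak{m}=m$ but $\mathfrak{n}\neq n$, the polynomials $Q_{\mathfrak{m}}$ and $Q_m$ have the same degree but different coefficients, and $\log(Q_{\mathfrak{m}}(e^p)/Q_m(e^p))$ is of size $O(m\log(\mathfrak{n}/n))=O(m)$, which for the sequences of Lemma~\ref{Lemma1} with $\delta\gamma>1$ is unbounded in $k$. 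So case~(2) does not reduce to a purely linear equation in $t$, and the proposed uniqueness argument collapses at its first step.

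Second, the passage to "the schematic form $(M-N)(p-\log N)=\Psi+O(1)$ where $\Psi$ comes entirely from the $Q$-ratio" is not justified. You yourself estimate the binomial quotient as $O((m+\mathfrak{m})\log N)$; since $m_k$ can grow like a power of $k$ and $\log N\asymp\log k$, this term is not $O(1)$ and cannot be dropped. What actually has to happen is a cancellation between the binomial contribution $\log\bigl(\binom{M-1}{\mathfrak{m}}/\binom{N-1}{m}\bigr)$ and the $Q$-ratio at $e^p\asymp N$ (where the leading coefficients $A_{\mathfrak{m}}$, $A_m$ and $B_{2\mathfrak{n}}$, $B_{2n}$ interact), and the residual is what must be matched against $(M-N)(p-\log N)$. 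Your sketch does not perform this balancing, so the conclusion $p=\log N+O(1)$ in cases~(1)--(2), and the identification of the exponent $1-d$ in case~(3), are not established. A related issue: the asserted strict monotonicity of $x\mapsto\log(g_{\mathfrak{m},\mathfrak{n}}(x)-1)-\log(g_{m,n}(x)-1)$ is by no means obvious from \eqref{recuree1pm10}; differentiating gives $\frac{g'_{\mathfrak{m},\mathfrak{n}}}{g_{\mathfrak{m},\mathfrak{n}}-1}-\frac{g'_{m,n}}{g_{m,n}-1}$, and you would need a genuine argument (presumably using the explicit formula \eqref{recuree1pm10} and sign properties of the resulting rational expression) to conclude it has constant sign, and that argument will differ among the three cases. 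Finally, the parenthetical about an extra $-\log2$ in $\overline{s}_{m,n}$ is a red herring: the fixed-point equation for $\overline{\phi}$ comes from \eqref{diffeo} and does not involve $\overline{s}_{m,n}$ at all; the relevant $-\log 2$, when $\overline{\chi}=(z+1)/2$ on both sides, cancels in the difference of the two expressions from \eqref{recuree1pm25} and plays no role.
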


By the descriptions in Lemma~\ref{Lemma4}, we may present a uniform asymptotic relation for $\overline{\phi}(x)$, as well as for $|\overline{\phi}'(x)-1|$ for $x\geq\log N$ and all large $N$. The exact value of the constant $d$ in (3) of the above lemma is not important and we only need to know that $d\geq 0$. (In~\cite[Lemma~2.5]{Zhang2024}, the constant $d$ is chosen to be $1/(M-N)$ when $1<\gamma \leq 2$ and $d=0$ when $\gamma>2$, but the proof there shows that $d$ should be chosen a larger one when $1<\gamma\leq 2$.) Below we shall always assume that the terms $O(1)$ appearing are only dependent on the constant $C$, not on other variables. By the estimates in \eqref{recuree1pm25} or \eqref{recuree1pm25-fur}, we may slightly modify the proof of \cite[Lemma~2.6]{Zhang2024} to obtain the following

\begin{lemma}\label{Lemma5}
For $\mathfrak{m},\mathfrak{n},m,n,M,N$ and $\overline{\phi}:\mathbb{R}\to\mathbb{R}$ there exist positive constants $c_1,\cdots,c_8$ depending only on the constant $C$ in \eqref{recuree1pm31} such that
\begin{equation*}
\begin{split}
|\overline{\phi}(x)-x|\leq c_1e^{-x/2}\leq \frac{c_1}{N} \quad \text{for} \quad x>8\log N
\end{split}
\end{equation*}
and
\begin{equation*}
\begin{split}
\left|\overline{\phi}(x)-\frac{M}{N}x+\frac{1}{N}\log\frac{M!}{N!}\right|\leq c_2e^x \quad \text{for} \quad x<\log N.
\end{split}
\end{equation*}
Moreover, with $\overline{s}_{m,n}$ defined in \eqref{trans 0} we have
\begin{equation}\label{recuree1pm38}
\begin{split}
\left|\overline{\phi}(x)-x\right|\leq c_3 \quad \text{for} \quad x>\overline{s}_{m,n}
\end{split}
\end{equation}
and
\begin{equation*}
\begin{split}
\left|\overline{\phi}(x)-\frac{M}{N}x+\frac{1}{N}\log\frac{M!}{N!}\right|\leq c_4 \quad \text{for} \quad x<\log N.
\end{split}
\end{equation*}
Finally,
\begin{equation*}
\begin{split}
|\overline{\phi}'(x)-1|\leq c_5\left(e^{-x/2}+\beta(k)\right)\leq 2c_5\beta(k) \quad \text{for} \quad x>8\log N
\end{split}
\end{equation*}
and
\begin{equation*}
\begin{split}
\left|\overline{\phi}'(x)-\frac{M}{N}\right|\leq \frac{c_6e^x}{N} \quad \text{for} \quad x<\log N,
\end{split}
\end{equation*}
as well as
\begin{equation*}
\begin{split}
c_7\leq |\overline{\phi}'(x)|\leq c_8 \quad \text{for all} \quad x\in \mathbb{R}.
\end{split}
\end{equation*}

\end{lemma}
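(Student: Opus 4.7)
The plan is to derive all estimates from the defining equation $\overline{g}_{\mathfrak{m},\mathfrak{n}}(x)=\overline{g}_{m,n}(\overline{\phi}(x))$ by taking the logarithm of $\overline{g}_{\cdot,\cdot}(x)-1$ and invoking Lemma~\ref{Lemma3}. For both sides, formulas \eqref{recuree1pm25} or \eqref{recuree1pm25-fur} express $\log(\overline{g}_{m,n}(x)-1)$ as the sum of an explicit main part $-\log\binom{m+2n}{m}N!+e^x+Nx-2\log Q_m(e^x)-\log(1+e^x/N)$ plus a uniformly bounded error $R(e^x,N)=O(\beta(k))$. Equating the expressions on the two sides yields a master identity that I will analyze in three ranges: $x>8\log N$, $x<\log N$, and the transitional range near $x=\overline{s}_{m,n}$.

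First I would handle the range $x>8\log N$. There $e^x\geq N^8$, so the exponential term dominates: after cancellation, the master identity reads $e^x-e^{\overline{\phi}(x)}=O(Nx)+O(m\log N)+O(\beta(k))$. Dividing by $e^x$ and invoking that, by \eqref{recuree1pm71}--\eqref{recuree1pm73} and \eqref{recuree1pm31}, $M/N=1+O(1/k)$ and $m/N=O(\beta(k))$, one gets $|e^{\overline{\phi}(x)-x}-1|\leq c\,e^{-x/2}$, hence $|\overline{\phi}(x)-x|\leq c_1e^{-x/2}\leq c_1/N$. Differentiating the defining equation and using the explicit expression \eqref{recuree1pm10} for $g'_{m,n}$ then gives
\begin{equation*}
\overline{\phi}'(x)=\frac{\overline{g}'_{\mathfrak{m},\mathfrak{n}}(x)}{\overline{g}'_{m,n}(\overline{\phi}(x))}=e^{e^x-e^{\overline{\phi}(x)}+Mx-N\overline{\phi}(x)}\,\frac{A_mB_{2n}}{A_{\mathfrak{m}}B_{2\mathfrak{n}}}\,\frac{Q_m(e^{\overline{\phi}(x)})^2}{Q_{\mathfrak{m}}(e^x)^2},
\end{equation*}
from which $|\overline{\phi}'(x)-1|\leq c_5(e^{-x/2}+\beta(k))$ follows exactly as in the derivation of \eqref{recuree1pm17 bef1}, the $\beta(k)$-term coming from the $M/N$-discrepancy and the ratio of binomial-factorial constants estimated via Stirling.

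Second I would handle $x<\log N$. Now $e^x<N$ is small, so the dominant terms on each side are $Nx$ and $N\overline{\phi}(x)$. The $\log(1+e^x/N)$ and $Q_m$--terms are each $O(1)$ (uniformly in $m$, since $\sum A_i<\infty$), and Stirling turns the factorial ratio into $\log(M!/N!)$ modulo a bounded error. Solving for $\overline{\phi}(x)$ gives $|\overline{\phi}(x)-(M/N)x+(1/N)\log(M!/N!)|\leq c_2e^x$, and the corresponding derivative estimate $|\overline{\phi}'(x)-M/N|\leq c_6e^x/N$ follows by differentiating. For the intermediate bound \eqref{recuree1pm38}, I would combine the large-$x$ estimate with Lemma~\ref{Lemma3 trans}: at $x=\overline{s}_{\mathfrak{m},\mathfrak{n}}$ the defining equation forces $\overline{\phi}(x)=\overline{s}_{m,n}$, and \eqref{recuree1pm31=jgg} shows both of these differ from $\log M$ and $\log N$ respectively by $r_0+o(1)$, hence from each other by $O(1)$; the estimate then propagates forward to all $x>\overline{s}_{m,n}$ by monotonicity of $\overline{\phi}$ together with the previously established $|\overline{\phi}(x)-x|\leq c_1/N$ for $x>8\log N$ and a simple convexity/interpolation step to cover $\overline{s}_{m,n}<x<8\log N$. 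Finally, the two-sided bound $c_7\leq|\overline{\phi}'(x)|\leq c_8$ follows from the three cases just analyzed, since $\overline{\phi}'$ is continuous, bounded near $1$ for $x>8\log N$, bounded near $M/N\in[1,C]$ for $x<\log N$, and in the transitional range one gets uniform two-sided bounds from the explicit derivative formula above together with $c_7\leq M/N\leq C$.

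The main obstacle I expect is the transitional regime $\log N\leq x\leq 8\log N$, where neither the exponential nor the polynomial term dominates cleanly: one must estimate $\log Q_m(e^x)$, $\log(1+e^x/N)$, and the combination $e^x-e^{\overline{\phi}(x)}+Mx-N\overline{\phi}(x)$ simultaneously, and ensure all constants depend only on the uniform bound $C$ in \eqref{recuree1pm31} rather than on $k$. Handling this uniformly — rather than for fixed $(m,n),(\mathfrak{m},\mathfrak{n})$ as in Lemma~\ref{Lemma 0} — is what distinguishes the present lemma from its predecessor, and requires careful use of \eqref{recuree1pm71}--\eqref{recuree1pm73} to absorb all $k$-dependence into the $\beta(k)$-factor.
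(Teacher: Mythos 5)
The paper gives no proof of Lemma~\ref{Lemma5}: it only says the result follows by modifying the proof of~[Zhang2024, Lemma~2.6] using the formulas~\eqref{recuree1pm25} and~\eqref{recuree1pm25-fur}. Your overall strategy --- equate the two applications of Lemma~\ref{Lemma3} at $x$ and at $\overline{\phi}(x)$ to obtain a ``master identity'', then analyze it separately for $x>8\log N$ and $x<\log N$ while tracking all constants only through $C$ via~\eqref{recuree1pm71}--\eqref{recuree1pm73} --- is exactly what the paper points to, and your analysis of the two extreme ranges is essentially right (and you correctly observe $\overline{\phi}(\overline{s}_{\mathfrak{m},\mathfrak{n}})=\overline{s}_{m,n}$).

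However, there is a genuine gap in your treatment of the transitional range $\overline{s}_{m,n}<x\leq 8\log N$, precisely the regime needed for estimate~\eqref{recuree1pm38}. You propose ``monotonicity of $\overline{\phi}$ together with [the bound at $x>8\log N$] and a simple convexity/interpolation step,'' but $\overline{\phi}(x)-x$ has no a priori convexity, and monotonicity of $\overline{\phi}$ alone does not propagate an $O(1)$ bound through the interval. Concretely: at the left endpoint $x=\overline{s}_{\mathfrak{m},\mathfrak{n}}$ you know $|\overline{\phi}(x)-x|=O(1)$, and at $x=8\log N$ you know $|\overline{\phi}(x)-x|=O(1/N)$, but in between monotonicity of $\overline{\phi}$ only confines $\overline{\phi}(x)$ to $[\overline{s}_{m,n},\,8\log N+O(1/N)]$ while $x$ ranges over $[\overline{s}_{\mathfrak{m},\mathfrak{n}},\,8\log N]$; since $\overline{s}_{m,n}=\log N+O(1)$, this only gives $|\overline{\phi}(x)-x|=O(\log N)$, which is unbounded in $k$. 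To get $O(1)$ you must return to the master identity (or equivalently to Lemma~\ref{Lemma4}'s information about the fixed point and the sign of $\overline{\phi}(x)-x$) in this range and show that the exponential term $e^x-e^{\overline{\phi}(x)}$ --- which is already of size comparable to $N$ once $x\geq\overline{s}_{m,n}$ --- forces $\overline{\phi}(x)-x$ to remain bounded; a contradiction/bootstrap argument (suppose $|\overline{\phi}(x)-x|$ ever exceeds a large constant $c_3$ and derive a contradiction in the master identity) is the standard way out, and you omit it. You acknowledge this range as ``the main obstacle'' at the end, which is accurate; but the lemma is not proved without it.

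Two smaller points on the explicit $\overline{\phi}'$ formula: (i) from~\eqref{recuree1pm10}, $g'_{m,n}(z)=A_mB_{2n}\,e^{e^z+Nz}/Q_m(e^z)^2$, so the constant ratio in your formula should be $A_{\mathfrak{m}}B_{2\mathfrak{n}}/(A_mB_{2n})$, not $A_mB_{2n}/(A_{\mathfrak{m}}B_{2\mathfrak{n}})$; (ii) when $\overline{\chi}_{\mathfrak{m},\mathfrak{n}}\neq\overline{\chi}_{m,n}$ (which occurs for $\overline{\phi}_2$), a factor $\overline{\chi}'_{\mathfrak{m},\mathfrak{n}}/\overline{\chi}'_{m,n}\in\{1/2,2\}$ is missing; it is absorbed in the constants but should be present in the formula.
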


\subsection{Definition of a quasiregular map}\label{Definition of a quasiregular map-add} 

We begin to define a quasiregular map using the functions $\overline{g}_{m,n}$ defined in \eqref{recuree1pm1309hb-1-kjh-1}. By modifying these functions slightly, we obtain closely related functions $u_{m,n}$ and $v_{m,n}$ and then glue restrictions of these maps to half-strips along horizontal lines to obtain quasiregular maps $U$ and $V$ which are defined
in the right and left half-planes respectively. Then we will glue these functions along the imaginary axis to obtain a quasiregular map $G$ in the plane.

Let $(m_k)$ and $(n_k)$ be the two sequences from Lemma~\ref{Lemma1} and Lemma~\ref{Lemma2} respectively. Then we give explicit expression for $\overline{g}_{m,n}$ defined in \eqref{recuree1pm1309hb-1-kjh-1}. On each strip
\begin{equation*}
\begin{split}
\hat{\Xi}_k=\{x+iy: \quad y\geq0, \quad 2\pi\mathcal{N}_{k-1}<y< 2\pi\mathcal{N}_k\}
\end{split}
\end{equation*}
of the upper half-plane we have
\begin{equation}\label{recuree1-main eq1}
\begin{split}
\overline{g}_{m_k,n_k}(z)=g_{m_k,n_k}\left(\frac{x+iy-2i\pi\mathcal{N}_{k-1}}{N_k}\right),
\end{split}
\end{equation}
if $k=1,2$ and
\begin{equation}\label{recuree1-main eq2}
\begin{split}
\overline{g}_{m_k,n_k}(z)=
\frac{1}{2}\left[g_{m_k,n_k}\left(\frac{x+iy-2i\pi\mathcal{N}_{k-1}}{N_k}\right)+1\right]
\end{split}
\end{equation}
if $k\geq 3$. On the other hand, denoting $N_{0}=0$, $\mathcal{N}_{0}=0$ and $\mathcal{N}_{-k}=-\mathcal{N}_{k}$, on each strip
\begin{equation*}
\check{\Xi}_{-k}=\{z=x+iy: \quad y\leq 0, \quad 2\pi\mathcal{N}_{-k}\leq y\leq 2\pi \mathcal{N}_{1-k}\}
\end{equation*}
of the lower half-plane we have
\begin{equation}\label{recuree1-main eq3}
\begin{split}
\overline{g}_{m_k,n_k}(z)=g_{m_k,n_k}\left(\frac{x+iy-2i\pi\mathcal{N}_{1-k}}{N_{k}}\right).
\end{split}
\end{equation}
We will use the above expressions for $z$ in the left half-plane, that is, for $z=x+iy$ such that $x\leq 0$. For $z$ in the right half-plane, say in the strips $\hat{\Pi}_k=\{z=x+iy: 2\pi(k-1)\leq y\leq 2\pi k\}$ of the upper half-plane and the strips $\check{\Pi}_{-k}=\{z=x+iy: 2\pi(-k)\leq y\leq 2\pi (1-k)\}$ of the lower half-plane, we only need to define $\overline{g}_{m_k,n_k}(z)$ by replacing $N_k,\mathcal{N}_{k-1},\mathcal{N}_{1-k}$ in \eqref{recuree1-main eq1}, \eqref{recuree1-main eq2} and \eqref{recuree1-main eq3} by $1,k-1,1-k$ respectively. In view of the explicit expressions of $\overline{g}_{m_k,n_k}$ defined in \eqref{recuree1-main eq1}, \eqref{recuree1-main eq2} and \eqref{recuree1-main eq3}, the maps $U$, $V$ and $G$ will not commute with the complex conjugation. However, we shall only define them in the upper half-plane since the definitions in the lower half-plane are similar.

We begin by constructing the map $U$. Instead of $\overline{g}_{m,n}$ defined in \eqref{recuree1pm1309hb-1-kjh-1}, we consider the map
\begin{equation*}
\begin{split}
u_{m,n}: \{z\in\mathbb{C}: \text{Re}\ z\geq0\}\to\mathbb{C}, \quad u_{m,n}(z)=\overline{g}_{m,n}(z+\overline{s}_{m,n}).
\end{split}
\end{equation*}
Note that $\overline{g}_{m,n}$ is increasing on the real line and maps $[0,\infty)$ onto $[2,\infty)$. Put $U(z)=u_{m_k,n_k}(z)$ in the half-strip
\begin{equation*}
\begin{split}
\hat{\Pi}_k^{+}=\{x+iy: x>0, \quad  2\pi(k-1)<y<2\pi k\}.
\end{split}
\end{equation*}
The function $U$ will be discontinuous on the horizontal lines $L^{+}_{k-1}=\{z=x+iy: x>0, y=2\pi(k-1)\}$ and $L^{+}_k=\{z=x+iy: x>0, y=2\pi k\}$. In order to obtain a continuous function we consider the function $\overline{\psi}_k:[0,\infty)\to[0,\infty)$ defined by $u_{m_{k+1},n_{k+1}}(x)=u_{m_k,n_k}(\overline{\psi}_k(x))$. The function $\overline{\psi}_k$ is closely related to the function $\overline{\phi}$ considered in Lemma~\ref{Lemma4} and Lemma~\ref{Lemma5} and also the function $\overline{\phi}_2$ defined in \eqref{diffeo-s-2}. In fact, denoting by $\overline{\phi}_k$ the function $\overline{\phi}$ corresponding to $(m_k,2n_k)$ and $(m_{k+1},2n_{k+1})$, we have
\begin{equation}\label{recuree1-main eqiqu-uy-1}
\begin{split}
\overline{\psi}_k(x)=\overline{\phi}_k(x+\overline{s}_{m_{k+1},n_{k+1}})-\overline{s}_{m_{k},n_{k}}.
\end{split}
\end{equation}
We then define $U:\{z\in\mathbb{C}: \text{Re}\ z\geq0\}\to \mathbb{C}$ by interpolating between $u_{m_{k+1},n_{k+1}}$ and $u_{m_{k},n_{k}}$ as follows: if $2\pi (k-1)\leq y<2\pi k$, say $y=2\pi (k-1)+2\pi t$ where $0\leq t<1$, then we put
\begin{equation*}
\begin{split}
U(x+iy)=u_{m_{k},n_{k}}((1-t)x+t\overline{\psi}_k(x)+iy)=u_{m_{k},n_{k}}(x+iy+t(\overline{\psi}_k(x)-x)).
\end{split}
\end{equation*}

We now define a function $V$ in the left half-plane. In order to do so, we define
\begin{equation*}
\begin{split}
v_{m,n}:\{z\in\mathbb{C}: \text{Re}\ z< 0\}\to \mathbb{C},  \quad v_{m,n}(z)=\overline{g}_{m,n}\left(\frac{z}{m+2n+1}+\overline{s}_{m,n}\right).
\end{split}
\end{equation*}
Note that $v_{m,n}$ maps $(-\infty,0]$ monotonically onto $(1,2]$. Let $(m_k)$ and $(n_k)$ be as before and write $N_k=m_k+2n_k+1$. Put $\mathcal{N}_k=\sum_{j=1}^{k}N_j$. This time we would like to define $V(z)=v_{m_k,n_k}(z)$ in the half-strip
\begin{equation*}
\begin{split}
\hat{\Xi}_k^{-}=\{x+iy: x<0, \ 2\pi \mathcal{N}_{k-1}\leq y<2\pi \mathcal{N}_k\},
\end{split}
\end{equation*}
but again this function would be discontinuous on the horizontal lines $\mathcal{L}^{-}_{k-1}=\{z=x+iy: x<0, y=2\pi\mathcal{N}_{k-1}\}$ and $\mathcal{L}^{-}_k=\{z=x+iy: x<0, y=2\pi\mathcal{N}_{k}\}$. In order to obtain a continuous function we again interpolate between $v_{m_{k+1},n_{k+1}}$ and $v_{m_k,n_k}$. Similarly as before we consider the map $\overline{\psi}_k:(-\infty,0]\to (-\infty,0]$ defined by $v_{m_{k+1},n_{k+1}}(x)=v_{m_k,n_k}(\overline{\psi}_k(x))$. Denoting by $\overline{\phi}_k$ the function $\overline{\phi}$ corresponding to $(m_k,n_k)$ and $(m_{k+1},n_{k+1})$, we have
\begin{equation}\label{recuree1-main eqiqu-uy-2}
\begin{split}
\overline{\psi}_k(x)=N_k\overline{\phi}_k\left(\frac{x}{N_{k+1}}+\overline{s}_{m_{k+1},n_{k+1}}\right)-N_k\overline{s}_{m_{k},n_{k}}.
\end{split}
\end{equation}
Then we define $V:\{z\in \mathbb{C}:\text{Re}\ z\leq 0\}\to \mathbb{C}$ by interpolating between $v_{m_{k+1},n_{k+1}}$ and $v_{m_k,n_k}$ as follows: if $2\pi \mathcal{N}_{k-1}\leq y<2\pi \mathcal{N}_k$, say $y=2\pi \mathcal{N}_{k-1}+2\pi N_kt$ where $0\leq t<1$, then we put
\begin{equation*}
\begin{split}
V(x+iy)=v_{m_{k},n_{k}}((1-t)x+t\overline{\psi}_k(x)+iy)=v_{m_{k},n_{k}}(x+iy+t(\overline{\psi}_k(x)-x)).
\end{split}
\end{equation*}
This map $V$ is continuous on the horizontal lines $\mathcal{L}^{-}_k$ for any $k$ in the left half-plane.

Now we define our map $G$ by gluing $U$ and $V$ along the imaginary axis. In order to do this we note that by construction of $(m_k)$ and $(n_k)$ in Lemma~\ref{Lemma1} and Lemma~\ref{Lemma2} we have $U(iy)=V(ih(y))$ and thus $U(ig(y))=V(iy^{\gamma})$ for $y\geq 0$ with the maps $h$ and $g$ from Lemma~\ref{Lemma2}. Therefore, we will consider a homeomorphism $Q$ of the right half-plane
\begin{equation*}
\begin{split}
\mathbb{H}^{+}=\{z\in\mathbb{C}:\text{Re}\ z\geq 0\}
\end{split}
\end{equation*}
onto itself, satisfying $Q(\overline{z})=\overline{Q(z)}$, such that $Q(\pm iy)=\pm ig(y)$ for $y\geq 0$ while $Q(z)=z$ for $\text{Re}\ z\geq 1$. We thus have to define $Q(z)$ in the strip $\{z\in\mathbb{C}:0<\text{Re}\ z<1\}$. For $|\text{Im}\ z|\geq 1$ we define $Q$ in this strip by interpolation; that is, we put
\begin{equation*}
\begin{split}
Q(x\pm iy)=x\pm i((1-x)g(y)+xy)  \quad
 \text{if} \quad 0<x<1 \quad \text{and} \quad y\geq 1.
\end{split}
\end{equation*}
In the remaining part of the strip we define $Q$ by
\begin{equation*}
\begin{split}
Q(z)=\left\{
       \begin{array}{ll}
         z|z|^{\gamma-1}, & \text{if} \quad 0<|z|<1; \\
         z,               & \text{if} \quad |z|>1, \quad \text{but} \quad  0<\text{Re}\ z<1 \quad \text{and} \quad 0\leq |\text{Im}\ z|\leq 1.
       \end{array}
     \right.
\end{split}
\end{equation*}
Note that $h(y)=y$ for $0\leq y\leq 2\pi$ since $m_1=n_1=0$ implies that $N_1=1$, and thus $g(y)=y^{\gamma}$ for $0\leq y<1$. Thus $Q(\pm i y)=\pm ig(y)$ also for $|y|\leq 1$. Moreover, we have $g(1)=1$, meaning that the above expressions for $Q$ do indeed coincide for $|\text{Im}\ z|=1$. We conclude that the map $W=U\circ Q$ satisfies
\begin{equation*}
\begin{split}
W(\pm iy)=U(\pm ig(y))=V(\pm iy^{\gamma}) \quad \text{for} \quad y\geq 0.
\end{split}
\end{equation*}

Let now $\rho\in(1/2,1)$. We let $\gamma=1/(2\rho-1)$ and put $\sigma=\rho/(2\rho-1)$. The hypothesis that $1/2<\rho<1$ corresponds to $\gamma>1$ as well as $\sigma>1$. Then we define the map
\begin{equation}\label{Def-1}
\begin{split}
G(z)=\left\{
       \begin{array}{ll}
         W(z^{\rho}),         & \text{if} \quad |\arg z|\leq \frac{\pi}{2\rho}; \\
         V(-(-z)^{\sigma}),   & \text{if} \quad |\arg(-z)|\leq \frac{\pi}{2\sigma},
       \end{array}
      \right.
\end{split}
\end{equation}
which is continuous on the imaginary axis and on the horizontal lines $L^{+}_k$ in the right half-plane and $\mathcal{L}^{-}_{k}$ in the left half-plane for any integer $k$ respectively. Here, for $\mu>0$, we denote by $z^{\mu}$ the principal branch of the power which is defined in $\mathbb{C}\setminus(-\infty,0]$.

\subsection{Completion of the proof}\label{Completion of the proof-add} 

With the definition of $G$ in \eqref{Def-1}, here we first prove Theorem~\ref{maintheorem5} under the assumption that $\rho\in(1/2,1)$. With the expression of $\overline{\psi}_k$ in \eqref{recuree1-main eqiqu-uy-1}, we have $\overline{\psi}_{k}(x)-x=\overline{\phi}_{k}(x+\overline{s}_{m_{k+1},n_{k+1}})-(x+\overline{s}_{m_{k+1},n_{k+1}})+(\overline{s}_{m_{k+1},n_{k+1}}-\overline{s}_{m_k,n_k})$. Thus by Lemma~\ref{Lemma3 trans} and Lemma~\ref{Lemma5} we have $|\overline{\psi}_{k}(x)-x|=O(1)$ for $0\leq x\leq 8\log N_k$ and
\begin{equation*}
\begin{split}
|\overline{\psi}_{k}(x)-x|\leq \frac{c_1}{N_k}+|\log N_{k+1}-\log N_k|+O(\beta(k))\log N_k
\end{split}
\end{equation*}
for $x\leq 8\log N_k$. By the estimate in \eqref{recuree1pm16  fajr-2}, as well as the estimate in \eqref{recuree1pm73}, we see that
\begin{equation}\label{recuree1pm95-poi-1}
\begin{split}
|\overline{\psi}_{k}(x)-x|=O(\beta(k))\log k
\end{split}
\end{equation}
for $x>8\log N_k$. Similarly, with the expression of $\overline{\psi}_k$ in \eqref{recuree1-main eqiqu-uy-2} we have
\begin{equation*}
\begin{split}
\frac{1}{N_k}&\left|\overline{\psi}_k(x)-x\right|
=\left|\overline{\phi}_k\left(\frac{x}{N_{k+1}}+\overline{s}_{m_{k+1},n_{k+1}}\right)-\frac{x}{N_{k}}-\overline{s}_{m_{k},n_{k}}\right|\\
\leq &\ \left|\overline{\phi}_k\left(\frac{x}{N_{k+1}}+\overline{s}_{m_{k+1},n_{k+1}}\right)-\frac{N_{k+1}}{N_{k}}\left(\frac{x}{N_{k+1}}+\overline{s}_{m_{k+1},n_{k+1}}\right)+\frac{1}{N_k}\log\frac{N_{k+1}!}{N_k!}\right|\\
&+\left|\frac{N_{k+1}}{N_{k}}\overline{s}_{m_{k+1},n_{k+1}}-\overline{s}_{m_{k},n_{k}}-\frac{1}{N_k}\log\frac{N_{k+1}!}{N_k!}\right|.
\end{split}
\end{equation*}
By Lemma~\ref{Lemma3 trans} and Lemma~\ref{Lemma5} we have $|\overline{\psi}_{k}(x)-x|=O(1)$ for $-2N_k\log N_k\leq x\leq 0$ and, together with the estimate in \eqref{recuree1pm16  fajr-2} as well as the estimate in \eqref{recuree1pm73}, we easily show that
\begin{equation}\label{recuree1pm95-poi-2}
\begin{split}
\frac{1}{N_k}|\overline{\psi}_k(x)-x|=O(\beta(k))\log k
\end{split}
\end{equation}
for $x<-2N_k\log N_k$. With the error term $O(\beta(k))\log k$, instead of the error term $O(1/k^{\min\{1,\gamma-1\}})$, then we can follow exactly the same process as that in~\cite[Subsection~3.3]{Bergweilereremenko2019} and the comments in \cite[Subsection~2.3]{Zhang2024}, together with the inequalities in Lemma~\ref{Lemma5}, to estimate the dilatation of $G$ and show that $G$ is quasiregular in the plane and also satisfies the hypothesis of the Teichm\"uller--Wittich--Belinskii theorem.  This theorem, together with the existence theorem for quasiconformal mappings, yields that there exists a quasiconformal homeomorphism $\tau:\mathbb{C}\to\mathbb{C}$ and a meromorphic function $F$ such that
\begin{equation}\label{recuree1pm117}
\begin{split}
G(z)=F(\tau(z)) \quad \text{and} \quad \tau(z)\sim z \quad \text{as} \quad z\to\infty.
\end{split}
\end{equation}
Then $F=G\circ \tau^{-1}$ is a locally univalent meromorphic function. Define $E=F/F'$. Then $E$ is a Bank--Laine function. Clearly, $E=f_1f_2$ for two normalized solutions of the second order differential equation \eqref{bank-laine0} with an entire coefficient $A$.

Below we use similar arguments as in Subsection~\ref{Completion of the proof} to estimate the numbers of poles and zeros of $F$ respectively and also to determine the orders of $E$ and $A$. We need some facts about the construction of $U$ and $V$. Now we are ready to estimate the numbers of poles and zeros of $F$.

We first estimate the number of poles of $F$. We note from Lemma~\ref{Lemma1} that $\sum_{i=1}^{k}m_i\sim (2\pi)^{\delta\gamma-1}k^{\delta\gamma}$ when $0\leq \delta\gamma\leq 1$ and  $\sum_{i=1}^{k}m_i\sim (2\pi)^{\delta\gamma-1}(\log(k+2\pi))^{-2}k^{\delta\gamma}$ when $\delta\gamma>1$. Let $r>0$ and choose $k\in \mathbb{N}$ such that $2\pi(k-1)<r\leq 2\pi k$. It follows from the construction of $U$ and \eqref{recuree1pm95-poi-1} that the number of poles of $U$ satisfies
\begin{equation}\label{recuree1pm118o-0ijyt}
\begin{split}
\sum_{i=1}^{k}m_i \leq n(r,\infty,U)\leq 2\sum_{i=1}^{k}m_i
\end{split}
\end{equation}
for large $r$. Thus the double inequalities in \eqref{recuree1pm118o-0ijyt} yield $n(r,\infty,U)\asymp r^{\delta\gamma}$ when $0\leq \delta\gamma\leq 1$ and $n(r,\infty,U)\asymp (\log(r+2\pi))^{-2}r^{\delta\gamma}$ when $\delta\gamma>1$. Similarly, we choose $k\in \mathbb{N}$ such that $2\pi\mathcal{N}_{k-1} <r \leq 2\pi \mathcal{N}_k$. Recall that $\mathcal{N}_k\sim (2\pi)^{\gamma-1}k^{\gamma}$ as $k\to\infty$ by \eqref{recuree1pm109}. It follows from the construction of $V$ and \eqref{recuree1pm95-poi-2} that the number of poles of $V$ satisfies
\begin{equation}\label{recuree1pm118o-0ijyt-1}
\begin{split}
\sum_{i=1}^{k}m_i \leq n(r,\infty,V)\leq 2\sum_{i=1}^{k}m_i
\end{split}
\end{equation}
for large $r$. Then the double inequalities in \eqref{recuree1pm118o-0ijyt-1} imply that $n(r,\infty,V)\asymp r^{\delta}$ when $0\leq \delta\gamma\leq 1$ and $n(r,\infty,V)\asymp (\log(r+2\pi))^{-2}r^{\delta}$ when $\delta\gamma>1$. Denote $\lambda=\delta\rho\gamma$. Since $\sigma=\rho\gamma$, we conclude that the number of poles of $G$ satisfies $n(r,\infty,G)\asymp r^{\lambda}$ when $0\leq \delta\gamma\leq 1$ and $n(r,\infty,G)\asymp (\log(r+2\pi))^{-2}r^{\lambda}$ when $\delta\gamma>1$. By the relations in \eqref{recuree1pm117}, we have $n(r,\infty,F)\asymp r^{\lambda}$ when $0\leq \delta\gamma\leq 1$ and $n(r,\infty,F)\asymp (\log(r+2\pi))^{-2}r^{\lambda}$ when $\delta\gamma>1$. Thus $\lambda(f_1)=\lambda$.

Then we estimate the number of zeros of $F$. To this end, we first estimate the number of zeros of $g_{m_k,n_k}+1$ uniformly for all pairs $(m_k,2n_k)$. Recall from \cite[p.~7]{Hayman1964Meromorphic} that the Nevanlinna characteristic of the function $\exp(e^z)$ satisfies
\begin{equation}\label{characteristic N}
\begin{split}
T(r,\exp(e^z))\sim\frac{e^r}{(2\pi^3r)^{1/2}}, \quad r\to\infty.
\end{split}
\end{equation}
By the recursive formulas of the coefficients $A_i$ and $B_j$ in $g_{m,n}$ in \eqref{recuree1pm9}, we see that $\sum_{i=0}^{\infty}A_i<\infty$ and $\sum_{j=0}^{\infty}|B_j|<\infty$ and also that $\sum_{i=0}^{\infty}iA_i<\infty$ and $\sum_{j=0}^{\infty}|jB_j|<\infty$. For each pair $(m,2n)$, we write $g_{m,n}$ in \eqref{recuree1pm9} as $g_{m,n}(z)=\mathcal{R}(z)\exp(e^z)$ for a function $\mathcal{R}(z)=R_{m,n}(e^z)$. Then we have $T(r,g_{m,n}+1)=T(r,g_{m,n})+O(1)$ and thus
\begin{equation}\label{characteristic N-1}
\begin{split}
T(r,g_{m,n}+1)=T(r,\exp(e^z))+O(T(r,e^{mz}))+O(T(r,e^{nz}))+O(1).
\end{split}
\end{equation}
On the other hand, we also have the identity
\begin{equation*}
\begin{split}
\frac{1}{g_{m,n}(z)+1}=1-\frac{1}{\mathcal{R}'/\mathcal{R}+e^z}\frac{(g_{m,n}(z)+1)'}{g_{m,n}(z)+1},
\end{split}
\end{equation*}
which and the lemma on the logarithmic derivative and the first main theorem imply that
\begin{equation}\label{characteristic N-3}
\begin{split}
m\left(r,\frac{1}{g_{m,n}+1}\right)&=O(\log rT(2r,g_{m,n}))+O\left(T(r,\mathcal{R}'/\mathcal{R}+e^z)\right)+O(1)\\
&=O(\log rT(2r,g_{m,n}))+O((m+2n+1)T(r,e^{z}))+O(1).
\end{split}
\end{equation}
Recall that $N_k=m_k+2n_k+1\sim \gamma(2\pi k)^{\gamma-1}$ as $k\to\infty$ by \eqref{recuree1pm16  fajr-2} and $\mathcal{N}_k\sim (2\pi)^{\gamma-1}k^{\gamma}$ as $k\to\infty$ by \eqref{recuree1pm109}. Therefore, for the two sequences $(m_k)$ in Lemma~\ref{Lemma1} and $(n_k)$ in Lemma~\ref{Lemma2}, if $k\asymp r$ or $k^{\gamma}\asymp r$, as we assume in the following, then by \eqref{characteristic N} and \eqref{characteristic N-1} we have $\log T(r,g_{m_k,n_k}+1)\asymp r$ uniformly for all pairs $(m_k,2n_k)$. Together with \eqref{characteristic N-3}, the first main theorem implies that $\log N(r,0,g_{m_k,n_k}+1)\asymp r$, and hence $\log n(r,0,g_{m_k,n_k}+1)\asymp r$, uniformly for all pairs $(m_k,2n_k)$.

First, let $r>0$ and choose $k\in \mathbb{N}$ such that $2\pi(k-1)<r\leq 2\pi k$. Denote by $\hat{n}(r,0,\overline{g}_{m_k,n_k})$ the number of zeros of $\overline{g}_{m_k,n_k}$ in the first quadrant of the plane. Then, for all pairs $(m_k,2n_k)$, we have
\begin{equation}\label{recuree1pm118gkgy-1}
\begin{split}
\log\hat{n}(r,0,\overline{g}_{m_k,n_k})\asymp r.
\end{split}
\end{equation}
Denote by $\hat{n}(\ddot{r},0,U)$ the number of zeros of $U$ in the square region $\{z=x+iy:|x|+|y|\leq r\}$ intersecting the upper half-plane. It follows from the construction of $U$ that the number of zeros of $U$ in the upper half-plane satisfies
\begin{equation}\label{recuree1pm118gkgy-2}
\begin{split}
\sum_{i=1}^{\ddot{k}}\frac{1}{\ddot{k}}\hat{n}(\ddot{r}/2,0,u_{m_i,n_i})\leq \hat{n}(r,0,U)\leq \sum_{i=1}^{k}\frac{1}{k}\hat{n}(2\ddot{r},0,u_{m_i,n_i})
\end{split}
\end{equation}
for large $r$ and some integer $\ddot{k}$. Then by \eqref{recuree1pm118gkgy-1} we see that $\log(\sum_{i=1}^{k}\hat{n}(2\ddot{r},0,u_{m_i,n_i})/k))\asymp r$. For the quantity in the left-hand side of \eqref{recuree1pm118gkgy-2}, we have similar asymptotic relations. Then the double inequalities in \eqref{recuree1pm118gkgy-2} imply that $\log\hat{n}(r,0,U)\asymp r$. On the other hand, we choose $k\in \mathbb{N}$ such that $2\pi\mathcal{N}_{k-1} <r \leq 2\pi \mathcal{N}_k$. It follows from the construction of $V$ that the number of zeros of $V$ in the upper half-plane satisfies
\begin{equation}\label{recuree1pm118gkgy-3}
\begin{split}
\sum_{i=1}^{\ddot{k}}\frac{N_i}{\mathcal{N}_{\ddot{k}}}\hat{n}(\ddot{r}/2,0,v_{m_i,n_i})\leq \hat{n}(r,0,V)\leq \sum_{i=1}^{k}\frac{N_i}{\mathcal{N}_k}\hat{n}(2\ddot{r},0,v_{m_i,n_i})
\end{split}
\end{equation}
for large $r$ and some integer $\ddot{k}$. Recall that $N_k\sim \gamma(2\pi k)^{\gamma-1}$ as $k\to\infty$ and $\mathcal{N}_k\sim (2\pi)^{\gamma-1}k^{\gamma}$ as $k\to\infty$. Then by \eqref{recuree1pm118gkgy-1} and the double inequalities in \eqref{recuree1pm118gkgy-3} we have $\log\hat{n}(r,0,V)\asymp r/k^{\gamma-1}\asymp r^{1/\gamma}$. Since $\sigma=\rho\gamma$ and since $\mathcal{N}_k\sim (2\pi)^{\gamma-1}k^{\gamma}$ as $k\to\infty$, we conclude from the previous estimates for $\log\hat{n}(r,0,U)$ and $\log\hat{n}(r,0,V)$ that the number of zeros of $G$ in the upper half-plane satisfies $\log \hat{n}(r,0,G)\asymp r^{\rho}$. Similarly, we may estimate the number of zeros of $F$ in the lower half-plane. Let $r>0$ and choose $k\in \mathbb{N}$ such that $2\pi(k-1)<r\leq 2\pi k$. It follows from the construction of $U$ that the number of zeros of $U$ in the lower half-plane satisfies
\begin{equation}\label{recuree1pm118gkgy-4}
\begin{split}
2\sum_{j=1}^{k}n_j \leq \check{n}(r,0,U)\leq 4\sum_{j=1}^{k}n_j
\end{split}
\end{equation}
for large $r$. Since $2\sum_{j=1}^{k}n_j\sim \mathcal{N}_k$ as $k\to\infty$ by \eqref{recuree1pm72}, the double inequalities in \eqref{recuree1pm118gkgy-4} yield $\check{n}(r,0,U)\asymp r^{\gamma}$. On the other hand, if we choose $k\in \mathbb{N}$ such that $2\pi\mathcal{N}_{k-1} <r \leq 2\pi \mathcal{N}_k$, then it follows from the construction of $V$ that the number of zeros of $V$ in the lower half-plane satisfies
\begin{equation}\label{recuree1pm118gkgy-5}
\begin{split}
2\sum_{j=1}^{k}n_j \leq \check{n}(r,0,V)\leq 4\sum_{j=1}^{k}n_j
\end{split}
\end{equation}
for large $r$. Since $2\sum_{j=1}^{k}n_j\sim \mathcal{N}_k$ as $k\to\infty$ by \eqref{recuree1pm72}, the double inequalities in \eqref{recuree1pm118gkgy-5} yield that $\check{n}(r,0,V)\asymp r$. Since $\sigma=\rho\gamma$, we conclude that the number of zeros of $G$ in the lower half-plane satisfies $\check{n}(r,0,G)\asymp r^{\rho\gamma}$. By combining the two estimates for $\hat{n}(r,0,G)$ and $\check{n}(r,0,G)$ together, we conclude that $\log n(r,0,G)\asymp r^{\rho}$. Finally, by the relations in \eqref{recuree1pm117}, the number of zeros of $F$ satisfies $\log n(r,0,F)\asymp r^{\rho}$. Thus $\lambda(f_2)=\infty$.

Now we show that $\rho(A)=\rho$ by using similar arguments as in Subsection~\ref{Completion of the proof}. With the choice of the two sequences $(m_k)$ and $(n_k)$ in Lemma~\ref{Lemma1} and Lemma~\ref{Lemma2} and the explicit expressions of $\overline{g}_{m_k,n_k}$ in \eqref{recuree1-main eq1}, \eqref{recuree1-main eq2} and \eqref{recuree1-main eq3}, we may also suppose that there is a set $\Omega_1$ of finite linear measure such that the two estimates in \eqref{growth-1} and \eqref{growth-2} hold for $\overline{g}_{m_k,n_k}$ for all $z\in \mathbb{C}$ and $|z|\not\in \Omega_1$ and all pairs $(m_k,2n_k)$. Clearly, with the notation $L_k=m_k+2n_k$, this implies that there is a set $\Omega_2$ of finite linear measure such that
\begin{equation*}
\left\{
\begin{array}{ll}
|v_{m_k,n_k}(z)|\leq \exp((L_k+2\varepsilon)r/N_k)\left|\exp(e^{z/N_k})\right| & \text{for} \quad z\in \mathbb{H}^{-} \quad \text{and} \quad |z|\not\in \Omega_2,\\
|u_{m_k,n_k}(z)|\leq \exp((L_k+2\varepsilon)r)\left|\exp(e^{z})\right|     & \text{for} \quad z\in \mathbb{H}^{+} \quad \text{and} \quad |z|\not\in \Omega_2
\end{array}
\right.
\end{equation*}
for all pairs $(m,2n)$. Let $x+iy\in \mathbb{H}^{-}$ with $\text{Im}\ z\geq 0$ and $k\in \mathbb{N}$ with $2\pi \mathcal{N}_{k-1}\leq \text{Im} z<2\pi \mathcal{N}_k$. Then, with $t=(y-2\pi \mathcal{N}_{k-1})/{2\pi N_k}$ where $0\leq t<1$, we have a similar estimate as in \eqref{growth-4}. Since $N_k=L_k-1=m_k\sim\gamma(2\pi k)^{\gamma-1}$ as $k\to\infty$ by \eqref{recuree1pm16  fajr-2} and thus $N_k=O(k^{\gamma-1})=O(|z|^{\gamma-1})$ by the construction of $(m_k)$ in Lemma~\ref{Lemma1} and $(n_k)$ in Lemma~\ref{Lemma2}, this implies that
\begin{equation}\label{recuree1pm125}
\begin{split}
|V(z)|\leq \exp((O(r^{\gamma-1})+3\varepsilon)r)
\end{split}
\end{equation}
for $z\in \mathbb{H}^{-}$ and $|z|\not\in \Omega_2$. On the other hand, for $z\in \mathbb{H}^{+}$ and $|z|\not\in \Omega_1$ we have
\begin{equation*}
\begin{split}
|\overline{g}_{m_k,n_k}(z)|\leq \exp((L_k+2\varepsilon)r)\exp\left(e^{\text{Re}\ z}\right).
\end{split}
\end{equation*}
Again, assuming that $\text{Im}\ z\geq 0$, we choose $k\in \mathbb{N}$ such that $2\pi (k-1)\leq \text{Im}\ z <2\pi k$. Then, with $U(z)=u_{m_k,n_k}(q(z))=\overline{g}_{m_k,n_k}(q(z)+\overline{s}_{m_k,n_k})$ where $q(z)$ is defined by $q(x+iy)=x+iy+t(\overline{\psi}_k(x)-x)$, we have
\begin{equation}\label{recuree1pm126-jih}
\begin{split}
|U(z)|\leq \exp((L_k+2\varepsilon)\text{Re}(q(z))+\overline{s}_{m_k,n_k})\exp\left(e^{\text{Re}(q(z))+\overline{s}_{m_k,n_k}}\right)
\end{split}
\end{equation}
for $z\in \mathbb{H}^{+}$ and $|z|\not\in \Omega_2$. We have by Lemma~\ref{Lemma3 trans} that
\begin{equation}\label{recuree1pm128}
\begin{split}
\overline{s}_{m_k,n_k}=\log N_k+O(1)=O(\log |z|).
\end{split}
\end{equation}
Then, since $L_k=N_k-1$, we deduce from \eqref{recuree1pm38}, \eqref{recuree1pm126-jih} and \eqref{recuree1pm128} that
\begin{equation*}
\begin{split}
\log|U(z)| \leq&\ (L_k+2\varepsilon)\text{Re}(q(z))+\overline{s}_{m_k,n_k}+\exp(\text{Re}(q(z))+\overline{s}_{m_k,n_k})\\
\leq&\ \exp((1+o(1))|z|)+O(|z|^{\gamma})\leq \exp((1+o(1))|z|)
\end{split}
\end{equation*}
as $z\to\infty$ in $\mathbb{H}^{+}$ and $|z|\not\in \Omega_2$. Together with \eqref{recuree1pm125} we conclude that
\begin{equation*}
\begin{split}
\log|G(z)|\leq \exp((1+o(1))|z|^{\rho})
\end{split}
\end{equation*}
as $|z|\to\infty$ outside a set $\Omega_3$ of finite linear measure and hence \eqref{recuree1pm117} yields that
\begin{equation*}
\begin{split}
\log|F(z)|\leq \exp((1+o(1))|z|^{\rho})
\end{split}
\end{equation*}
as $|z|\to\infty$ outside a set $\Omega_4$ of finite linear measure. Together with \eqref{recuree1pm117}, the lemma on the logarithmic derivative now implies that $E=F/F'$ satisfies
\begin{equation*}
\begin{split}
m\left(r,\frac{1}{E}\right)=O(r^{\rho})
\end{split}
\end{equation*}
outside a set of finite linear measure. Since all zeros of $E$ are simple, the previous estimates on the poles and zeros of $F$ imply that $\log N(r,0,E)\asymp r^{\rho}$. Then the above estimate together with the first main theorem implies that $\log T(r,E)\asymp r^{\rho}$. Then the lemma on the logarithmic derivative, together with \eqref{recuree1pm7}, implies that
\begin{equation*}
\begin{split}
m(r,A)=2m\left(r,\frac{1}{E}\right)+O(\log rT(r,E))=O(r^{\rho}).
\end{split}
\end{equation*}
Thus $\rho(A)\leq \rho$. On the other hand, we note that $g_{1,1}$ is analytic in $\hat{\Pi}_1^{+}$ and in $\check{\Pi}_{-1}^{+}$ as well as in the positive real axis $\mathbb{R}^{+}$. Let $\overline{\eta}:\mathbb{H}^{+}\to \mathbb{H}^{+}$ be the map such that $\overline{\eta}=z^{\rho}$. Denote $\overline{Q}=Q+\overline{s}_{1,1}$. Note that $\overline{\eta}$ and $\overline{Q}$ both commute with the complex conjugation and also that are both analytic for $z$ such that $\text{Re}\ z\geq 1$ in the right half-plane. Then, for $\tau$ in \eqref{recuree1pm117}, by the same arguments as in the proof of Theorem~\ref{maintheorem3} we may choose a curve in $(\tau\circ \overline{\eta}^{-1}\circ \overline{Q}^{-1})(\hat{\Pi}_1^{+}\cup\check{\Pi}_{-1}^{+}\cup \mathbb{R}^{+})$, say $\mathcal{L}^{+}$, such that $(\overline{Q}\circ\overline{\eta}\circ\tau^{-1})(\mathcal{L}^{+})$ is the positive real axis and also that $\tau$ satisfies $|\tau'(z)|\leq 2|z|$ for all $z\in (\overline{\eta}^{-1}\circ \overline{Q}^{-1})(\mathcal{L}^{+})$ such that $|z|$ is large. Then, by the relation in \eqref{recuree1pm7} as well as the definitions of $u_{m,n}$ in Subsection~\ref{Definition of a quasiregular map-add} and the relation $F=G\circ \tau^{-1}$, we finally obtain that $\log |A(z_n)|\geq d|z_n|^{\rho}$ for some positive constant $d$ and an infinite sequence $(z_n)$ such that $z_n\in\mathcal{L}^{+}$ and $|z_n|$ is large, implying that $\rho(A)\geq \rho$. Hence $\rho(A)=\rho$.

To complete the proof under the assumption $\rho\in(1/2,1)$, we let $f_3$ be a solution linearly independent from $f_1$ of the second order differential equation \eqref{bank-laine0} with the constructed coefficient $A$. Then there are two nonzero constants $d_1$ and $d_2$ such that $f_3=d_1f_1+d_2f_2$ and thus
\begin{equation}\label{recover-arar}
\begin{split}
f_3=d_2f_1\left(F+\frac{d_1}{d_2}\right)=d_2f_1\left(G\circ\tau^{-1}+\frac{d_1}{d_2}\right),
\end{split}
\end{equation}
where $\tau$ is defined in \eqref{recuree1pm117}. By the explicit expressions of $\overline{g}_{m_k,n_k}$ in \eqref{recuree1-main eq1}, \eqref{recuree1-main eq2} and \eqref{recuree1-main eq3} we see that the number of zeros $\check{n}(r,0,f_3)$ of $f_3$ in the region $\tau(\check{\mathbb{H}})$, where $\check{\mathbb{H}}$ is the lower half-plane, satisfies $\log \check{n}(r,0,f_3)\geq d_3r^{\rho}$ for some positive constant $d_3$ and all large $r$, which implies that $\lambda(f_3)=\infty$. Thus we conclude that the Bank--Laine function $E_c=f_1(cf_1+f_2)$ satisfies $\lambda(E_c)=\infty$ for any constant $c$.

Finally, recalling that $m_k=n_k=0$ for $k=1,2,3$ in the two sequences $(m_k)$ and $(n_k)$, we may follow exactly the same process as that in \cite[Section~5]{Bergweilereremenko2019} together with the comments in Subsection~\ref{Completion of the proof-ex} to extend the above results to the case $\rho\in(n/2,n)$ for any positive integer $n$. Moreover, in certain region of the plane we may use similar arguments as for equation \eqref{recover-arar} to show that any other solution $f_3$ linearly independent from $f_1$ of the second order differential equation \eqref{bank-laine0} with the constructed coefficient $A$ cannot satisfy $\lambda(f_3)<\infty$. We shall omit the details. This completes the proof of Theorem~\ref{maintheorem5}.

\section{Proof of Theorem~\ref{maintheorem6}}\label{Proof of maintheorem6}

The proof for Theorem~\ref{maintheorem6} will be similar as that in Section~\ref{Proof of maintheorem4} and that in Section~\ref{Proof of maintheorem5} in many parts. So we shall keep the details minimal in the following.

Also, we first prove Theorem~\ref{maintheorem6} for the case $n=1$. We let $\gamma\in[0,1]$ and $\delta\in[0,1]$ be two numbers and write $\lambda=\delta\gamma$. We also write $\lambda_1=\lambda$ and $\lambda_2=\gamma$. Then $\lambda_1\leq \lambda_2\in[0,1]$ and thus we shall choose two sequences $(m_k)$ and $(n_k)$ associated with $\lambda_1$ and $\lambda_2$ respectively in the same way as in Section~\ref{Proof of maintheorem4}, namely,
\begin{enumerate}
  \item [(I)] if $\lambda_1,\lambda_2\in[0,1)$, then we choose the sequence $(m_k)$ and $(n_k)$ constructed in Subsection~\ref{Preliminary lemmas} respectively;
  \item [(II)] if $\lambda_1<\lambda_2=1$, then we choose the sequence $(m_k)$ constructed in Subsection~\ref{Preliminary lemmas} and $(n_k)$ in the way that $n_k=0$ for $k=1,2,3$ and $n_k=1$ for all $k\geq 4$ respectively;
  \item [(III)] if $\lambda_1=\lambda_2=1$, then we choose the sequence $(m_k)$ and $(n_k)$ in the way that $m_k=0$ for $k=1,2,3$ and $m_k=1$ for all $k\geq 4$ and $n_k=0$ for $k=1,2,3$ and $n_k=1$ for all $k\geq 4$ respectively.
\end{enumerate}
With the definitions of $\overline{g}_{m,n}$ in \eqref{recuree1pm1309hb-1-kjh-1}, for each pair $(m,2n)$ and each pair $(\mathfrak{m},2\mathfrak{n})$, we shall consider the function $\overline{\phi}:\mathbb{R}\to\mathbb{R}$ defined in \eqref{diffeo}, i.e.,
\begin{equation*}
\begin{split}
(\overline{\chi}_{\mathfrak{m},\mathfrak{n}}\circ g_{\mathfrak{m},\mathfrak{n}})(x)=\overline{g}_{\mathfrak{m},\mathfrak{n}}(x)=(\overline{g}_{m,n}\circ\overline{\phi})(x)=(\overline{\chi}_{m,n}\circ g_{m,n}\circ\overline{\phi})(x).
\end{split}
\end{equation*}
Put $N=m+2n+1$ and $M=\mathfrak{m}+2\mathfrak{n}+1$. We will consider the functions $\overline{\phi}$ for the case that $m=m_k$, $n=n_k$, $\mathfrak{m}=m_{k+1}$ and $\mathfrak{n}=n_{k+1}$, that is, $M=N_{k+1}$ and $N=N_k$. In order to simplify the formulas in most situations below we suppress the dependence of $\overline{\phi}$ from $m,n$ and $\mathfrak{m},\mathfrak{n}$ from the notation.

In all the three cases (I), (II) and (III), we have $N\leq 4$ in any case and thus the asymptotic behaviors of $\overline{\phi}$ described in Lemma~\ref{Lemma 0} apply to all $\overline{\phi}_k$ in the case $\overline{\chi}_{m_{k+1},n_{k+1}}=\overline{\chi}_{m_k,n_k}$ and the asymptotic behaviors of $\overline{\phi}$ described in \eqref{recuree1pm12-new-1} and \eqref{recuree1pm12-new-2} apply to all $\overline{\phi}_k$ in the case $\overline{\chi}_{m_{k+1},n_{k+1}}(z)=(z+1)/2$ and $\overline{\chi}_{m_k,n_k}(z)=z$. Moreover, the asymptotic behaviors of $\overline{\phi}$ for the case $\overline{\chi}_{m_{k+1},n_{k+1}}=z$ and $\overline{\chi}_{m_k,n_k}(z)=(z+1)/2$ can be easily obtained as for \eqref{recuree1pm12-new-1} and \eqref{recuree1pm12-new-2}. We shall omit the details. Also, since $N\leq 4$ in any case, for the constant $\overline{s}_{m,n}\in \mathbb{R}$ such that $\overline{g}_{m,n}(\overline{s}_{m,n})=2$ defined in \eqref{trans 0}, we have $\overline{s}_{m,n}=O(1)$ for all $m,n$, which can be proved by slightly modifying the proof of Lemma~\ref{Lemma b}.

Note that $m_k=0$ for $k=1,2,3$ and $n_k=0$ for $k=1,2,3$ for each two sequences $(m_k)$ and $(n_k)$ in (I), (II) and (III). We begin to define a quasiregular maps $U$, $V$ and $G$ by gluing the functions $\overline{g}_{m,n}$ defined in \eqref{recuree1pm1309hb-1-kjh-1}, but with the new sequences $(m_k)$ and $(n_k)$ chosen above. Also, we have $U(iy)=V(iy)$ on the imaginary axis. Again, we first work with the map
\begin{equation*}
\overline{g}_{m,n}\circ\chi_{m,n}=\overline{\chi}_{m,n}\circ g_{m,n}\circ\chi_{m,n}=\overline{\chi}_{m,n}\circ \tilde{g}_{m,n},
\end{equation*}
where $\chi_{m,n}$ is defined in \eqref{diffeo  fajr  ayt}. The explicit expressions for the functions $\overline{g}_{m,n}\circ\chi_{m,n}$ with different pairs $(m_k,2n_k)$ are given as follows. With the definitions of $\tilde{g}_{m_k,n_k}$ defined in \eqref{diffeo  fajr  a-j}, on each strip
\begin{equation*}
\begin{split}
\hat{\Xi}_k=\{x+iy: \quad y\geq0, \quad 2\pi\mathcal{N}_{k-1}<y< 2\pi\mathcal{N}_k\}
\end{split}
\end{equation*}
of the upper half-plane we have
\begin{equation}\label{recuree1-main eq1--nmb4}
\begin{split}
(\overline{g}_{m_k,n_k}\circ\chi_{m_k,n_k})(z)=\left\{
\begin{array}{ll}
\frac{1}{2}\left[g_{m_k,n_k}\left(\frac{x}{l}+i\frac{y-2\pi\mathcal{N}_{k-1}}{N_k}\right)+1\right], & x\geq 0, \\
\frac{1}{2}\left[g_{m_k,n_k}\left(\frac{x+iy-2i\pi\mathcal{N}_{k-1}}{N_k}\right)+1\right],
& x\leq 0
\end{array}
\right.
\end{split}
\end{equation}
if $n_k=1$ and
\begin{equation}\label{recuree1-main eq2--nmb5}
\begin{split}
(\overline{g}_{m_k,n_k}\circ\chi_{m_k,n_k})(z)=\left\{
\begin{array}{ll}
g_{m_k,n_k}\left(\frac{x}{l}+i\frac{y-2\pi\mathcal{N}_{k-1}}{N_k}\right), & x\geq 0, \\
g_{m_k,n_k}\left(\frac{x+iy-2i\pi\mathcal{N}_{k-1}}{N_k}\right),          & x\leq 0
\end{array}
\right.
\end{split}
\end{equation}
if $n_k=0$. On the other hand, denoting $N_{0}=0$, $\mathcal{N}_{0}=0$ and $\mathcal{N}_{-k}=-\mathcal{N}_{k}$, on each strip
\begin{equation*}
\check{\Xi}_{-k}=\{z=x+iy: \quad y\leq 0, \quad 2\pi\mathcal{N}_{-k}\leq y\leq 2\pi \mathcal{N}_{1-k}\}
\end{equation*}
of the lower half-plane we have
\begin{equation}\label{recuree1-main eq3--nmb6}
\begin{split}
(\overline{g}_{m_k,n_k}\circ\chi_{m_k,n_k})(z)=\left\{
\begin{array}{ll}
g_{m_k,n_k}\left(\frac{x}{l}+i\frac{y-2\pi\mathcal{N}_{1-k}}{N_{k}}\right), & x\geq 0, \\
g_{m_k,n_k}\left(\frac{x+iy-2i\pi\mathcal{N}_{1-k}}{N_k}\right),          & x\leq 0.
\end{array}
\right.
\end{split}
\end{equation}
In the above new definitions, we shall choose $l=1$ in the case (I) and $l=3$ in the case (II) and $l=4$ in the case (III). Note that $n_k=0$ for $k=1,2,3$ and thus we have the expressions in \eqref{recuree1-main eq2--nmb5} in $\hat{\Xi}_1$, $\hat{\Xi}_2$ and $\hat{\Xi}_3$. Also note that the two expressions in each of the definition of $\overline{g}_{m_k,n_k}\circ\chi_{m_k,n_k}$ in \eqref{recuree1-main eq1--nmb4}, \eqref{recuree1-main eq2--nmb5} or \eqref{recuree1-main eq3--nmb6} coincide on the imaginary axis.

In view of these definitions, the maps $U$, $V$ and $G$ will not commute with the complex conjugation, but we shall only define them in the upper half-plane since the definitions in the lower half-plane are similar.

The definitions for $U$ and $V$ will be similar as that in Subsection~\ref{Definition of a quasiregular map}, except that we shall use the explicit expressions of $\overline{g}_{m,n}\circ\chi_{m_k,n_k}=\overline{\chi}_{m,n}\circ \tilde{g}_{m,n}$ defined in \eqref{recuree1-main eq1--nmb4} and \eqref{recuree1-main eq2--nmb5}, instead of the functions $\tilde{g}_{m,n}$ defined in \eqref{diffeo  fajr  a-j}. To construct the map $U$, we define
\begin{equation*}
\begin{split}
u_{m,n}: \{z\in\mathbb{C}: \text{Re}\ z\geq0 \}\to\mathbb{C}, \quad u_{m,n}(z)=\overline{g}_{m,n}(\chi_{m,n}(z)+\overline{s}_{m,n}),
\end{split}
\end{equation*}
where $\chi_{m,n}$ is defined in \eqref{diffeo  fajr  ayt}.
Note that $u_{m,n}$ is increasing on the real line and maps $[0,\infty)$ onto $[2,\infty)$.
For each two chosen sequences $(m_k)$ and $(n_k)$, we write $N_k=m_k+2n_k+1$. Put $U(z)=u_{m_k,n_k}(z)$ in the half-strip
\begin{equation*}
\begin{split}
\hat{\Xi}_k^{+}=\{x+iy: x>0, \quad  2\pi\mathcal{N}_{k-1}<y<2\pi\mathcal{N}_k\}.
\end{split}
\end{equation*}
Then we consider the map
\begin{equation*}
\begin{split}
u_{m_k,n_k}: \{z\in\mathbb{C}: \text{Re}\ z\geq0 \}\to\mathbb{C}, \quad u_{m_k,n_k}(z)=\overline{g}_{m_k,n_k}(\chi_{m_k,n_k}(z)+\overline{s}_{m_k,n_k})
\end{split}
\end{equation*}
when $n_k=0$ and
\begin{equation*}
\begin{split}
u_{m_k,n_k}: \{z\in\mathbb{C}: \text{Re}\ z\geq0 \}\to\mathbb{C}, \quad u_{m_k,n_k}(z)=\frac{1}{2}\left[\overline{g}_{m_k,n_k}(\chi_{m_k,n_k}(z)+\overline{s}_{m_k,n_k})+1\right]
\end{split}
\end{equation*}
when $n_k=1$. Consider the function $\overline{\psi}_k:[0,\infty)\to[0,\infty)$ defined by $u_{m_{k+1},n_{k+1}}(x)=u_{m_k,n_k}(\overline{\psi}_k(x))$. Then, denoting by $\phi_k$ the function $\phi$ corresponding to $(m_k,2n_k)$ and $(m_{k+1},2n_{k+1})$, $\overline{\psi}_k$ takes the form
\begin{equation*}
\begin{split}
\overline{\psi}_k(x)=l\overline{\phi}_k\left(\frac{x}{l}+\overline{s}_{m_{k+1},n_{k+1}}\right)-l\overline{s}_{m_{k},n_{k}}.
\end{split}
\end{equation*}
Then we may interpolate in the same way as in Subsection~\ref{Definition of a quasiregular map} to obtain a map $U$ which is continuous on the horizontal lines $\mathcal{L}^{+}_{k}=\{z=x+iy, x>0, y=2\pi\mathcal{N}_k\}$ for any $k$ in the right half-plane: if $2\pi \mathcal{N}_{k-1}\leq y<2\pi\mathcal{N}_k$, say $y=2\pi \mathcal{N}_{k-1}+2\pi N_kt$ where $0\leq t<1$, then we put
\begin{equation*}
\begin{split}
U(x+iy)=u_{m_{k},n_{k}}((1-t)x+t\psi_k(x)+iy)=u_{m_{k},n_{k}}(x+iy+t(\psi_k(x)-x)).
\end{split}
\end{equation*}
On the other hand, to construct a function $V$ in the left half-plane, we define
\begin{equation*}
\begin{split}
v_{m,n}:\{z\in\mathbb{C}: \text{Re}\ z< 0\}\to \mathbb{C},  \quad v_{m,n}(z)=\overline{g}_{m,n}\left(\chi_{m,n}(z)+\overline{s}_{m,n}\right),
\end{split}
\end{equation*}
where $\chi_{m,n}$ is defined in \eqref{diffeo  fajr  ayt}. Note that $v_{m,n}$ maps $(-\infty,0]$ monotonically onto $(1,2]$. Let $(m_k)$ and $(n_k)$ be the two sequences as before. This time we would like to define $V(z)=v_{m_k,n_k}(z)$ in the half-strip
\begin{equation*}
\begin{split}
\hat{\Xi}_k^{-}=\{x+iy: x<0, \ 2\pi \mathcal{N}_{k-1}\leq y<2\pi \mathcal{N}_k\}.
\end{split}
\end{equation*}
Then we consider the map
\begin{equation*}
\begin{split}
v_{m_k,n_k}: \{z\in\mathbb{C}: \text{Re}\ z\leq0 \}\to\mathbb{C}, \quad v_{m_k,n_k}(z)=\overline{g}_{m_k,n_k}(\chi_{m_k,n_k}(z)+\overline{s}_{m_k,n_k})
\end{split}
\end{equation*}
when $n_k=0$ and
\begin{equation*}
\begin{split}
v_{m_k,n_k}: \{z\in\mathbb{C}: \text{Re}\ z\leq0 \}\to\mathbb{C}, \quad v_{m_k,n_k}(z)=\frac{1}{2}\left[\overline{g}_{m_k,n_k}(\chi_{m_k,n_k}(z)+\overline{s}_{m_k,n_k})+1\right]
\end{split}
\end{equation*}
when $n_k=1$. Consider the map $\overline{\psi}_k:(-\infty,0]\to (-\infty,0]$ defined by $v_{m_{k+1},n_{k+1}}(x)=v_{m_k,n_k}(\overline{\psi}_k(x))$. Then, denoting by $\phi_k$ the function $\phi$ corresponding to $(m_k,2n_k)$ and $(m_{k+1},2n_{k+1})$, $\overline{\psi}_k$ takes the form
\begin{equation*}
\begin{split}
\overline{\psi}_k(x)=N_k\overline{\phi}_k\left(\frac{x}{N_{k+1}}+\overline{s}_{m_{k+1},n_{k+1}}\right)-N_k\overline{s}_{m_{k},n_{k}}.
\end{split}
\end{equation*}
Then we may interpolate in the same way as in Subsection~\ref{Definition of a quasiregular map} to obtain a map $V$ which is continuous on the horizontal lines $\mathcal{L}^{-}_{k}=\{z=x+iy, x<0, y=2\pi\mathcal{N}_k\}$ for any $k$ in the left half-plane: if $2\pi \mathcal{N}_{k-1}\leq y<2\pi \mathcal{N}_k$, say $y=2\pi \mathcal{N}_{k-1}+2\pi N_kt$ where $0\leq t<1$, then we put
\begin{equation*}
\begin{split}
V(x+iy)=v_{m_{k},n_{k}}((1-t)x+t\psi_k(x)+iy)=v_{m_{k},n_{k}}(x+iy+t(\psi_k(x)-x)).
\end{split}
\end{equation*}

Note that $U$ and $V$ coincide on the imaginary axis. With the definitions of the maps $U$ and $V$, we define the map
\begin{equation}\label{Def-2}
\begin{split}
G(z)=\left\{
       \begin{array}{ll}
         U(z),   & \text{if} \quad |\arg z|\leq \frac{\pi}{2}; \\
         V(z),   & \text{if} \quad |\arg(-z)|\leq \frac{\pi}{2},
       \end{array}
      \right.
\end{split}
\end{equation}
which is continuous on the horizontal lines $\mathcal{L}^{+}_k$ in the right half-plane and $\mathcal{L}^{-}_{k}$ in the left half-plane for any integer $k$ respectively.

Then we can follow exactly the same process as in Subsection~\ref{Estimation of the dilatation} to show that the map $G$ defined in \eqref{Def-2} is quasiregular in the plane and satisfies the hypothesis of the Teichm\"uller--Wittich--Belinskii. This theorem, together with the existence theorem for quasiconformal mappings, yields that there exists a quasiconformal homeomorphism $\tau:\mathbb{C}\to\mathbb{C}$ and a meromorphic function $F$ such that
\begin{equation}\label{recuree1pm117 final-kjp-1}
\begin{split}
G(z)=F(\tau(z)) \quad  \text{and} \quad \tau(z)\sim z \quad \text{as} \quad z\to\infty.
\end{split}
\end{equation}
Then $F=G\circ \tau^{-1}$ is a locally univalent meromorphic function. Define $E=F/F'$. Then $E$ is a Bank--Laine function. Clearly, $E=f_1f_2$ for two normalized solutions of the second order differential equation \eqref{bank-laine0} with an entire coefficient $A$.

We now estimate the numbers of poles and zeros of $F$ respectively. For the number of poles of $F$, we may use the same arguments as in Subsection~\ref{Completion of the proof} to obtain that $n(r,\infty,F)\asymp (\log r)^2$ when $\lambda=0$ and $n(r,\infty,F)\asymp r^{\lambda}$ when $0<\lambda\leq 1$. Thus $\lambda(f_1)=\lambda$.

To estimate the number of zeros of $F$, we let $r>0$ and choose $k\in \mathbb{N}$ such that $2\pi\mathcal{N}_{k-1}<r\leq 2\pi \mathcal{N}_k$. Note that $\log n(r,0,\overline{g}_{m_k,n_k})\asymp r$ for all pairs $(m_k,2n_k)$, as is shown in Subsection~\ref{Completion of the proof-add}. Denote by $\hat{n}(r,0,\overline{g}_{m_k,n_k})$ the number of zeros of $\overline{g}_{m_k,n_k}$ in the upper half-plane. The estimates in Subsection~\ref{Completion of the proof-add} imply that the number of zeros of $U$ in the upper half-plane satisfies $\log \hat{n}(r,0,U)\leq c_1r$ for some positive constant $c_1$. On the other hand, for a given pari $(m,n)$, denoting by $n(\ddot{r},0,\overline{g}_{m,n})$ the number of zeros of $\overline{g}_{m,n}$ in the square region $\{z=x+iy:|x|+|y|\leq r\}$, we also have
\begin{equation*}
\begin{split}
n(\ddot{r}/2,0,\overline{g}_{m,n})\leq n(r,0,\overline{g}_{m,n})\leq n(2\ddot{r},0,\overline{g}_{m,n}).
\end{split}
\end{equation*}
By the periodicity of $\overline{g}_{m,n}$, we easily see that the number $n(\dot{r},0,\overline{g}_{m,n})$ of zeros of $\overline{g}_{m,n}$ in only one fixed strip $\Pi_k=\{x+iy: x>0, 2\pi(k-1)<y<2\pi k\}$ satisfies $n(\ddot{r},0,\overline{g}_{m,n})=[r/\pi+O(1)]n(\dot{r},0,\overline{g}_{m,n})$ and thus $\log n(\dot{r},0,\overline{g}_{m,n})\asymp r$. This implies that $\log \hat{n}(r,0,U)\geq c_2r$ for some positive constant $c_2$. Thus the number of zeros of $U$ in the upper half-plane satisfies $\log\hat{n}(r,0,U)\asymp r$. Similarly, the number of zeros of $V$ in the upper half-plane satisfies $\log\hat{n}(r,0,V)\asymp r$ and so the number of zeros of $G$ in the upper half-plane satisfies $\log\hat{n}(r,0,G)\asymp r$. For the number of zeros of $G$ in the lower half-plane, we can use the same arguments as in Subsection~\ref{Completion of the proof} to obtain that $\check{n}(r,0,G)\asymp (\log r)^2$ when $\gamma=0$ and $\check{n}(r,0,G)\asymp r^{\gamma}$ when $0<\gamma\leq 1$. By combining the estimates for $\hat{n}(r,0,G)$ and $\check{n}(r,0,G)$ together, we conclude that $\log n(r,0,G)\asymp r$. Finally, by the relations in \eqref{recuree1pm117 final-kjp-1}, the number of zeros of $F$ satisfies $\log n(r,0,F)\asymp r$. Thus $\lambda(f_2)=\infty$.

To determine the order of $A$, we may use similar arguments as in Subsection~\ref{Completion of the proof} and in Subsection~\ref{Completion of the proof-add} to estimate $\log|G|$ and $\log|F|$ from above and conclude that the resulting Bank--Laine function $E$ satisfies $m(r,1/E)=O(r)$ and further that $\rho(A)\leq 1$. Moreover, by similar arguments as there we may also consider $\log|E|$ along a curve in $(\tau\circ\tilde{\chi}_{1,1}^{-1})(\hat{\Xi}_1^{+}\cup\check{\Xi}_{-1}^{+}\cup \mathbb{R}^{+})$, say $\mathcal{L}^{+}$, such that $(\tilde{\chi}_{1,1}\circ\tau^{-1})(\mathcal{L}^{+})$ is the positive real axis, and conclude that $\rho(A)\geq 1$. Hence $\rho(A)=1$. Moreover, for any other solution $f_3$ linearly independent from $f_1$ of the second order differential equation \eqref{bank-laine0} with the constructed coefficient $A$, we may consider the equation \eqref{recover-arar} and show that $f_3$ satisfies $\lambda(f_3)=\infty$. We omit the details. This completes the proof for the case $n=1$.

Finally, the above results can be extended to the case for any positive integer $n$ in the same way as that in \cite[Section~5]{Bergweilereremenko2019} together with the comments in Subsection~\ref{Completion of the proof-ex} and also that in Subsection~\ref{Completion of the proof-add}. We shall also omit the details. This completes the proof of Theorem~\ref{maintheorem6}.

\section{Acknowledgements}

The author would like to greatly thank professor Walter Bergweiler for having enlightening discussions on the Bank--Laine conjecture and the quasiconformal surgery during his visit to the University of Kiel in July~2024. In particular, during the discussions the author specified the main obstacles in the proof of Theorem~\ref{maintheorem4} and Theorem~\ref{maintheorem5}. Some ideas for overcoming these obstacles were proposed during the discussions.

\end{document}